\tikzset{node distance=2em, ch/.style={circle,draw,on chain,inner sep=2pt},chj/.style={ch,join},every path/.style={shorten >=4pt,shorten <=4pt},line width=1pt,baseline=-1ex}
\newcommand{\mlabel}[1]{ \footnotesize \(#1\) }
\newcommand{\dnode}[2][chj]{ \node[#1,label={below:\mlabel{#2}}] {}; }
\newcommand{\dnodebr}[1]{ \node[chj,label={below right:\mlabel{#1}}] {}; }
\numberwithin{equation}{section}
\newtheorem{theorem}[equation]{Theorem}
\newtheorem{proposition}[equation]{Proposition}
\newtheorem{lemma}[equation]{Lemma}
\newtheorem{corollary}[equation]{Corollary}
\newtheorem{conjecture}[equation]{Conjecture}
\theoremstyle{definition}
\newtheorem{rmk}[equation]{Remark}
\newenvironment{remark}[1][]{\begin{rmk}[#1] \pushQED{\qed}}{\popQED \end{rmk}}
\newtheorem{eg}[equation]{Example}
\newtheorem{defn}[equation]{Definition}
\newcommand{\bA}{\mathbf{A}}
\newcommand{\rA}{\mathrm{A}}
\newcommand{\cB}{\mathcal{B}}
\newcommand{\rB}{\mathrm{B}}
\newcommand{\bC}{\mathbf{C}}
\newcommand{\cC}{\mathcal{C}}
\newcommand{\rE}{\mathrm{E}}
\newcommand{\bF}{\mathbf{F}}
\newcommand{\bG}{\mathbf{G}}
\newcommand{\rG}{\mathrm{G}}
\newcommand{\cH}{\mathcal{H}}
\newcommand{\rH}{\mathrm{H}}
\newcommand{\cI}{\mathcal{I}}
\newcommand{\cK}{\mathcal{K}}
\newcommand{\cL}{\mathcal{L}}
\newcommand{\rM}{\mathrm{M}}
\newcommand{\rN}{\mathrm{N}}
\newcommand{\cO}{\mathcal{O}}
\newcommand{\bP}{\mathbf{P}}
\newcommand{\cP}{\mathcal{P}}
\newcommand{\cQ}{\mathcal{Q}}
\newcommand{\bS}{\mathbf{S}}
\newcommand{\cX}{\mathcal{X}}
\newcommand{\cY}{\mathcal{Y}}
\newcommand{\bZ}{\mathbf{Z}}
\newcommand{\bc}{\mathbf{c}}
\newcommand{\fe}{\mathfrak{e}}
\newcommand{\fg}{\mathfrak{g}}
\newcommand{\fh}{\mathfrak{h}}
\newcommand{\bv}{\mathbf{v}}
\newcommand{\bz}{\mathbf{z}}
\renewcommand{\phi}{\varphi}
\renewcommand{\emptyset}{\varnothing}
\renewcommand{\tilde}[1]{\widetilde{#1}}
\newcommand{\ol}[1]{\overline{#1}}
\newcommand{\arxiv}[1]{\href{http://arxiv.org/abs/#1}{{\tt arXiv:#1}}}
\def\Ddots{\mathinner{\mkern1mu\raise\p@
\vbox{\kern7\p@\hbox{.}}\mkern2mu
\raise4\p@\hbox{.}\mkern2mu\raise7\p@\hbox{.}\mkern1mu}}
\DeclareMathOperator{\coker}{coker}
\DeclareMathOperator{\rank}{rank}
\DeclareMathOperator{\Pf}{Pf}
\DeclareMathOperator{\Proj}{Proj}
\DeclareMathOperator{\Jac}{Jac}
\DeclareMathOperator{\Pic}{Pic}
\newcommand{\GL}{\mathbf{GL}}
\newcommand{\SL}{\mathbf{SL}}
\newcommand{\Sp}{\mathbf{Sp}}
\newcommand{\Gr}{\mathbf{Gr}}
\newcommand{\fsl}{\mathfrak{sl}}
\newcommand{\BHM}{{\rm BHM}}
\newcommand{\CS}{\bC\bS}
\newcommand{\rCS}{\mathrm{CS}}
\title{Alternating trilinear forms on a 9-dimensional space and degenerations of $(3,3)$-polarized Abelian surfaces}
\author{Laurent Gruson \and Steven V Sam}
\date{September 15, 2014}
\begin{document}

\maketitle

\begin{abstract}
We give a detailed analysis of the semisimple elements, in the sense of Vinberg, of the third exterior power of a 9-dimensional vector space over an algebraically closed field of characteristic different from 2 and 3. To a general such element, one can naturally associate an Abelian surface $X$, which is embedded in 8-dimensional projective space. We study the  combinatorial structure of this embedding and explicitly recover the genus 2 curve whose Jacobian variety is $X$. We also classify the types of degenerations of $X$ that can occur. Taking the union over all Abelian surfaces in Heisenberg normal form, we get a $5$-dimensional variety which is a birational model for a genus $2$ analogue of Shioda's modular surfaces. We find determinantal set-theoretic equations for this variety and present some additional equations which conjecturally generate the radical ideal.
\end{abstract}

MSC 2010: 14D06, 14D22, 14K10, 15A72

\setcounter{tocdepth}{1}
\tableofcontents

\section*{Introduction}

For the purposes of this introduction, we work over the complex numbers $\bC$. In the notation of Shephard and Todd \cite{shephard}, the group $\rG_{32} \cong \bZ/3 \times \Sp_4(\bF_3)$ acts on its $4$-dimensional reflection representation, and it is known from the work of several mathematicians \cite{burkhardt, coble1, geer} that the regular orbits of this action parametrize Abelian surfaces embedded in $\bP^8$ via an indecomposable $(3,3)$-polarization together with a marked odd theta characteristic. Here regular means that the orbit does not intersect any of the reflection hyperplanes of $\rG_{32}$. In fact, this action can be seen as a slice in a bigger action: that of $\SL_9(\bC)$ on $\bigwedge^3 \bC^9$. This is just one of the examples of a $\theta$-representation in the sense of Vinberg \cite{vinberg}. In joint work with Weyman \cite{gsw}, the authors described, for many ``sporadic'' examples (mostly those that had not previously appeared in the literature) a construction of Abelian varieties from orbits in a $\theta$-representation. In the case of $\SL_9(\bC)$ acting on $\bigwedge^3 \bC^9$, one produces the Abelian surfaces in $\bP^8$ mentioned before.

The advantage of this bigger representation is that it contains richer structure. For example, every element in $\bigwedge^3 \bC^9$ has a decomposition as a sum of a ``semisimple'' element and a ``nilpotent'' element, and the elements in the slice correspond precisely to the semisimple elements, i.e., those whose nilpotent part is $0$. In particular, one can extend the classical parameterization above by considering the construction of \cite{gsw} for these other elements and see which geometric objects arise. Another point is that if we think of belonging to a slice as being ``diagonalizable'' (i.e., having maximal possible split rank), then the notions of diagonalizable and semisimple do not agree over fields which are not algebraically closed. The construction in \cite{gsw} is valid over any field, and for arithmetic applications, it is relevant to understand these constructions over fields such as the rational numbers. 

But as a first step, it is important to understand the situation over an algebraically closed field and with a semisimple element. Using the slice description and the construction in \cite{gsw}, one can associate a geometric object to any semisimple orbit, not just the regular ones. In this article, we apply this to the orbits of $\rG_{32}$ on its reflection representation $\fh$ and analyze the geometric objects constructed from non-regular orbits. These are certain degenerations of the Abelian surfaces and we give explicit descriptions of the kinds of degenerations that appear. The family of Abelian surfaces that we construct naturally form a flat family over an open subset of $\fh$ which one might consider analogous to Shioda's modular surfaces. We study the projection of this family into $\bP^8$ and describe equations for the closure. We would also like to point out that one advantage of the slice is that it allows a study of the moduli spaces considered in this paper from the perspective of tropical geometry. See \cite{RSS} for developments in this direction.

We now review the contents of this paper. In \S\ref{sec:background} we summarize some information about Vinberg's $\theta$-representations as well as the role of Heisenberg groups in finding normal forms for equations of embedded Abelian varieties. In \S\ref{sec:warmup} we review an easier situation that is analogous to our situation: quintic elliptic normal curves. Most of these results are known through other work, but we provide different proofs that will serve as simpler versions of some of the proofs that will appear in \S\ref{sec:shioda}. Here the group $\rG_{32}$ is replaced by the group $\rG_{16} \cong \bZ/5 \times \SL_2(\bF_5)$ acting on its $2$-dimensional reflection representation. 

In \S\ref{sec:33} we review the construction of \cite{gsw} in our case and also collect some background material on $(3,3)$-polarized Abelian surfaces. The important main result from existing literature is Theorem~\ref{thm:M23minus} which is a precise version of the parametrization of Abelian surfaces mentioned earlier. We also present some results which we have not seen in the literature. We wish to highlight Theorem~\ref{thm:fanoP4} which identifies our Abelian surfaces with Fano varieties of $7$-dimensional cubic hypersurfaces (the Coble cubic of the Abelian surface), and Theorem~\ref{thm:summary} which offers a summary of how this result connects Theorem~\ref{thm:M23minus} to an explicit universal family of genus $2$ curves. 

In \S\ref{sec:w39degenerations} we analyze the degenerations of these Abelian surfaces mentioned earlier. A summary is given in the beginning of \S\ref{sec:w39degenerations}. Finally, in \S\ref{sec:shioda}, we study an analogue of Shioda's modular surfaces. We take a naive point of view on these surfaces here: in low degrees, one can describe certain birational models of Shioda's modular surfaces as the union of all elliptic normal curves whose equations are in ``Heisenberg normal form''. So here we study the 5-dimensional variety which is the union of the (degenerate) Abelian surfaces in Heisenberg normal form. We describe determinantal set-theoretic equations for this variety and offer a conjecture on its full prime ideal.

\subsection*{Notation and conventions}

\begin{compactitem}
\item Throughout we will work with projective varieties which are usually embedded in a projective space, or a product of projective spaces. When we refer to properties, such as Cohen--Macaulay or Gorenstein, we mean that these properties hold for the local rings. When we wish to refer to the singularity of the vertex of the affine cone over a variety, we will use the prefix ``arithmetically'', e.g., arithmetically Cohen--Macaulay.

\item Given a vector space $V$, we use $\Gr(k,V)$ to denote the Grassmannian of $k$-dimensional subspaces of $V$. We set $\bP(V) = \Gr(1,V)$, so that our projective spaces parametrize lines rather than hyperplanes.

\item Our notation for graded Betti tables follows {\tt Macaulay 2} \cite{m2} notation.

\item A diagonalizable linear automorphism is called a complex reflection if all but one of its eigenvalues are $1$, and the remaining eigenvalue is a root of unity. A group generated by complex reflections is a complex reflection group. We will not restrict ourselves to the complex numbers (or even fields of characteristic $0$); the choice of terminology is for historical purposes. To index the finite complex reflection groups $\rG_i$, we use the notation of Shephard--Todd \cite{shephard}.

\item The notations $\SL_n(K)$, $\GL_n(K)$, $\Sp_n(K)$ refer to special linear, general linear, and symplectic groups, respectively, consisting of matrices of size $n$ (so for $\Sp_n$, we require that $n$ is even) over a field $K$. For our purposes, $K$ is either an algebraically closed field or a finite field, so there will be no ambiguity about which forms of the group to take.
\end{compactitem}

\subsection*{Acknowledgements} 

Steven Sam was supported by an NDSEG fellowship and a Miller research fellowship while this work was done.
The software packages {\tt GAP} \cite{gap} and {\tt Macaulay 2} \cite{m2} were very helpful.
The authors thank Igor Dolgachev and Bernd Sturmfels for helpful discussions.

\section{Background} \label{sec:background}

\subsection{Vinberg $\theta$-representations}

As shown by Kostant \cite{kostant}, the action of a semisimple complex Lie group $G$ on its adjoint representation $\fg$ has many favorable properties: its ring of invariants is a polynomial algebra, and is isomorphic to the ring of invariants of a finite reflection group (Weyl group) on a smaller vector space (a Cartan subalgebra). This situation was generalized by Kostant and Rallis \cite{kostantrallis} as follows: given an involution $\theta$ on the group $G$, the fixed point subgroup $G^\theta$ acts on the eigenspaces $\fg_{\pm}$ under the induced action of $\theta$ on $\fg$, and similar properties hold for the action of $G^\theta$ on $\fg_{-}$ (the action on $\fg_+$ reduces to the previous case).

In \cite{vinberg}, Vinberg investigated the situation of an arbitrary finite order automorphism $\theta$ on $G$ and showed that the properties mentioned above still hold: the ring of invariants of $G^\theta$ on each eigenspace of $\theta$ on $\fg$ is a polynomial ring and is equivalent to the ring of invariants of a finite group (we will call this the Weyl group for $(G,\theta)$) acting on a smaller vector space, which we will call a Cartan subspace. Furthermore, each Cartan subspace is the intersection of the eigenspace with a Cartan subalgebra in $\fg$, and all of them are conjugate under the action of $G^\theta$. Since it will not affect what we do, we will replace $G^\theta$ by its simply-connected cover.

We will be interested in one particular example of an order 3 automorphism on the adjoint complex Lie group of type $\rE_8$. In this case, the simply-connected cover of $G^\theta$ is $\SL_9(\bC)$ and the nontrivial eigenspaces are isomorphic to $\bigwedge^3 \bC^9$ and $\bigwedge^6 \bC^9$, respectively. The finite group in this case is the Shephard--Todd complex reflection group denoted $\rG_{32}$ \cite{shephard}. Actually, we can replace $\bC$ with any algebraically closed field $K$ of characteristic different from $2$ and $3$.

We study the orbits of $\SL_9(K)$ on $\bigwedge^3(K^9)$ and given a sufficiently generic orbit, we will construct an Abelian surface embedded in $\bP^8$. This construction is explained in \cite[\S 5]{gsw}. To get the most information, we restrict to the case of semisimple orbits (those orbits whose elements belong to some Cartan subspace) and work with a specific Cartan subspace (since they are all conjugate, we have not lost anything by doing this). In this situation, an element is considered sufficiently generic if it lies outside of the union of the reflection hyperplanes of $\rG_{32}$. We will describe the degenerations of the Abelian surfaces when we move into the hyperplanes. The possible types of degenerations correspond to the various types of intersections of hyperplanes.

For arithmetic applications, it is natural to investigate the structure of semisimple orbits over fields which are not algebraically closed, and their associated Abelian surfaces. In particular, it will no longer be true that two Cartan subspaces are conjugate to one another via $G^\theta(K)$ when $K$ is not algebraically closed, and one has to understand finer invariants such as split rank. It is also natural to ask about orbits which are not semisimple. We leave these issues for future work.

As a warmup, in \S\ref{sec:warmup}, we will also consider an order $5$ automorphism of the adjoint complex Lie group of type $\rE_8$, in which case we are considering the action of the group $\SL_5(K) \times \SL_5(K)$ on $K^5 \otimes \bigwedge^2 K^5$. In this case, one can naturally associate an elliptic quintic curve to each generic orbit.

\subsection{Equations of Abelian varieties} \label{sec:heisenberg}

We will need some information about embeddings of Abelian varieties $X$ into projective space via an ample line bundle $\cL$, which can be found in \cite[Chapter 6]{birkenhake} (this reference only deals with the complex numbers; for fields of positive characteristic, one can consult \cite{mumford}). Given $x \in X$, let $t_x$ be translation by $x$. Then the subgroup $K(\cL) = \{x \in X \mid t_x^* \cL \cong \cL\}$ is a finite group of the form $(\bZ/d_1 \oplus \cdots \oplus \bZ/d_g)^2$ where $g = \dim X$. 

For our elliptic normal quintic, we have $K(\cL) = (\bZ/5)^2$ and for our Abelian surfaces in $\bP^8$, we will have $K(\cL) = (\bZ/3)^4$ \cite[Proposition 5.6]{gsw}. With respect to this group, there are coordinates on the ambient projective space, called Schr\"odinger coordinates, on which the group $K(\cL)$ acts in a simple way. We just explain it in our two cases.

For the case $K(\cL) = (\bZ/5)^2$, there are coordinates $z_1, \dots, z_5$ (the indices are taken modulo $5$) for which the generators of $K(\cL)$ are given by the operators $\sigma \colon z_i \mapsto z_{i+1}$ and $\tau \colon z_i \mapsto \zeta^i z_i$ (here $\zeta$ is a fixed primitive $5$th root of unity). This is a projective representation of $(\bZ/5)^2$ on $\bP^4$, and we can lift it to a linear representation of a central extension
\[
1 \to \bZ/5 \to H_5 \to (\bZ/5)^2 \to 1
\]
where the central $\bZ/5$ is generated by scalar multiplication by $5$th roots of unity. If $N(H_5)$ is the normalizer of $H_5$ in $\GL_5(K)$, then we have $N(H_5) / H_5 \cong \SL_2(\bF_5) = \Sp_2(\bF_5)$. 

There is a natural symplectic pairing on $K(\cL) = (\bZ/5)^2$: for two operators $\phi, \psi \in K(\cL)$, their commutator $\phi \psi \phi^{-1} \psi^{-1}$ is a scalar operator which is multiplication by a $5$th root of unity. In particular, it is a power of $\zeta$, and we take the value of the pairing to be this power (which is defined modulo $5$). We define a {\bf full level 5 structure} on $X$ to be a choice of symplectic basis for $K(\cL)$, i.e., $b_1,b_2$ such that $\langle b_1, b_2 \rangle = 1 \pmod 5$, with the extra caveat that we identify the basis $b_1, b_2$ with $-b_1, -b_2$. So the finite group $\bP\SL_2(\bF_5) = \SL_2(\bF_5) / \{\pm I_2 \}$ acts simply transitively on the set of level structures. The ordered basis $\tau, \sigma$ is a symplectic basis.

For the case $K(\cL) = (\bZ/3)^4$, there are coordinates $x_{i,j}$ ($i,j \in \bZ/3$) for which the generators of $K(\cL)$ are given by 
\begin{align*}
\sigma_1 \colon x_{i,j} \mapsto x_{i+1,j}, \qquad \sigma_2 \colon x_{i,j} \mapsto x_{i,j+1}, \\
\tau_1 \colon x_{i,j} \mapsto \omega^i x_{i,j}, \qquad \tau_2 \colon x_{i,j} \mapsto \omega^j x_{i,j}.
\end{align*}
(here $\omega$ is a fixed primitive cube root of unity). This is a projective representation of $(\bZ/3)^4$ on $\bP^8$ which lifts to a linear representation of a central extension
\[
1 \to \bZ/3 \to H_{3,2} \to (\bZ/3)^4 \to 1
\]
where we have added scalar multiplication by cube roots of unity. In this case, we opt to use $z_1, \dots, z_9$ instead of $x_{0,0}, \dots, x_{2,2}$, and a dictionary between the two notations is given in \eqref{eqn:affineplane}. If $N(H_{3,2})$ is the normalizer of $H_{3,2}$ in $\GL_9(K)$, then we have $N(H_{3,2}) / H_{3,2} \cong \Sp_4(\bF_3)$.

Again, we have a natural symplectic pairing on $K(\cL)$ and we can define {\bf full level 3 structures} for $X$ as before. The finite group $\bP\Sp_4(\bF_3) = \Sp_4(\bF_3) / \{\pm I_4 \}$ acts simply transitively on the set of level structures.

The groups $H_5$ and $H_{3,2}$ are two instances of {\bf Heisenberg groups}. The advantage of this coordinate system is that it allows us to work with normal forms of the equations of the Abelian variety, which will be explained in more detail in the corresponding sections. 

\section{Warmup: Elliptic quintic curves} \label{sec:warmup}

Let $K$ be an algebraically closed field. Some of the statements in this section require that the characteristic is different from $5$, specifically, we will use the existence of primitive $5$th roots of unity in some places.

Much of what we say in this section is already contained in \cite{bhm}, but we offer some different proofs and additional calculations. We also refer the reader to \cite{fisher} for algorithmic aspects on invariants of elliptic quintic curves.

\subsection{The Vinberg representation}

Let $A$ and $B$ be two 5-dimensional vector spaces with bases $a_1, \dots, a_5$ and $b_1, \dots, b_5$. We set $G = \SL(A) \times \SL(B)$. The $G$-module $A \otimes \bigwedge^2 B$ is an example of a Vinberg representation: there is an order 5 automorphism of the Lie algebra of type $\rE_8$, which corresponds to the simple root $\alpha_5$ of the affine Dynkin diagram in Bourbaki notation
\begin{align} \label{eqn:affineE8}
\begin{tikzpicture}
\begin{scope}[start chain]
\dnode{1}
\dnode{3}
\dnode{4}
\dnode{5}
\dnode{6}
\dnode{7}
\dnode{8}
\dnode{0}
\end{scope}
\begin{scope}[start chain=br going above]
\chainin(chain-3);
\dnodebr{2}
\end{scope}
\end{tikzpicture}
\end{align}
(see \cite[\S 8.6]{kac} for details on finite order automorphisms of simple Lie algebras), and whose eigenspace decomposition is
\[
(\fsl(A) \times \fsl(B)) \oplus (A \otimes \bigwedge^2 B) \oplus (\bigwedge^2 A \otimes \bigwedge^4 B) \oplus (\bigwedge^3 A \otimes B) \oplus (\bigwedge^4 A \otimes \bigwedge^3 B)
\]
The maximal dimension of a Cartan subspace in $A \otimes \bigwedge^2 B$ is $2$ and the Weyl group is Shephard--Todd group $\rG_{16}$ \cite[\S 9]{vinberg}. We note that $\rG_{16} \cong \bZ/5 \times \SL_2(\bF_5)$.

For our choice of Cartan subspace for $A \otimes \bigwedge^2 B$, we pick the subspace with basis
\begin{align*}
h_1 &= \phantom{-}a_1 \otimes b_3 \wedge b_4 + a_2 \otimes b_4 \wedge b_5 - a_3 \otimes b_1 \wedge b_5 + a_4 \otimes b_1 \wedge b_2 + a_5 \otimes b_2 \wedge b_3, \\
h_2 &= -a_1 \otimes b_2 \wedge b_5 + a_2 \otimes b_1 \wedge b_3 + a_3 \otimes b_2 \wedge b_4 + a_4 \otimes b_3 \wedge b_5 - a_5 \otimes b_1 \wedge b_4,
\end{align*}
so a general semisimple element is conjugate to one of the form 
\begin{align} \label{eqn:5w25general}
c_1 h_1 + c_2 h_2,
\end{align}
for some $c_1,c_2 \in K$, and two such elements are conjugate under $G$ if and only if they are conjugate by the action of the Weyl group $W = \rG_{16}$. We list a pair of generators for the $W$-action:
\[
\mu = \begin{pmatrix} 0 & -1 \\ 1 & 0 \end{pmatrix}, \qquad 
\nu = \frac{1}{5} \begin{pmatrix}  4\zeta+3\zeta^2+2\zeta^3+\zeta^4 & -2\zeta-4\zeta^2-\zeta^3-3\zeta^4\\
-2\zeta+\zeta^2-\zeta^3-3\zeta^4 & \zeta+2\zeta^2+3\zeta^3-\zeta^4\end{pmatrix}
\]
($\zeta$ is a fixed primitive $5$th root of unity).

This Cartan subspace has the following description. We have an action of the Heisenberg group $H_5$ (see \S\ref{sec:heisenberg}) on $A$, and we also let it act on $B$ in the same way (i.e., replacing $a_i$ with $b_i$). Then the Cartan subspace is the invariant subspace under $H_5$ in $A \otimes \bigwedge^2 B$. From this, we also get the action of $N(H_5)/H_5 \cong \SL_2(\bF_5)$ on the Cartan subspace. If we also add the scalar matrices corresponding to the 5th roots of unity, then we get the action of the group $W = \rG_{16}$.

\subsection{Degree 5 curves of genus 1} \label{ss:deg5genus1}

We represent the element \eqref{eqn:5w25general} by a $5 \times 5$ skew-symmetric matrix over $\bP(A^*)$, where the coordinate functions are $z_i = a_i$
\begin{align} \label{eqn:ellipticmat}
\Psi_{\bc}(z) = \begin{pmatrix} 0 & c_1z_4 & c_2z_2 & -c_2z_5 & -c_1z_3\\ -c_1z_4 & 0 & c_1z_5 & c_2z_3 & -c_2z_1\\ -c_2z_2 & -c_1z_5 & 0 & c_1z_1 & c_2z_4\\ c_2z_5 & -c_2z_3 & -c_1z_1 & 0 & c_1z_2\\ c_1z_3 & c_2z_1 & -c_2z_4 & -c_1z_2 & 0
\end{pmatrix}.
\end{align}
The $4 \times 4$ Pfaffians are
\begin{align} \label{eqn:deg5pfaff}
\begin{array}{lll}
c_1c_2 z_1^2 - c_1^2 z_2z_5 + c_2^2 z_3z_4, & 
c_1c_2 z_2^2 - c_1^2 z_1z_3 + c_2^2 z_4z_5, & 
c_1c_2 z_3^2 - c_1^2 z_2z_4 + c_2^2 z_1z_5,\\
c_1c_2 z_4^2 - c_1^2 z_3z_5 + c_2^2 z_1z_2, &
c_1c_2 z_5^2 - c_1^2 z_1z_4 + c_2^2 z_2z_3,
\end{array}
\end{align}
and for $(c_1,c_2) \ne (0,0)$, these equations define a curve of arithmetic genus $1$.

Let $\zeta$ be a primitive $5$th root of unity. The curve above is nonsingular unless $c_1/c_2$ is one of the $12$ values $0, \infty, \zeta^i(1 \pm \sqrt{5})/2$ for $i=0,\dots,4$ \cite[\S 2.5]{bhm}. This is the union of the reflection lines for $\rG_{16}$ in the representation spanned by $c_1, c_2$. Letting $\Delta$ be the product of the corresponding linear forms, we rewrite this condition as $\Delta(\bc) \ne 0$.

Using the Buchsbaum--Eisenbud classification of codimension 3 Gorenstein ideals, it follows that every normally embedded degree 5 genus 1 curve in $\bP^4$ can be obtained from this construction (see \cite[Example 1.10, \S 4.2]{gsw} for details and references). In conclusion, if $\fh^\circ$ is the complement of the $12$ reflection lines in the Cartan subspace $\fh$, then we see that $\bP(\fh^\circ)$ parametrizes smooth genus $1$ degree $5$ curves in $\bP^4$ with full level $5$ structure. The scalar matrices in $\rG_{16}$ are generated by $\bZ/5$ and $\{\pm I_2 \} \subset \SL_2(\bF_5)$, so the free action of $\rG_{16}$ on $\fh^\circ$ descends to a free action of $\bP\SL_2(\bF_5)$ on $\bP(\fh^\circ)$, and taking the quotient $\bP(\fh^\circ) / \bP\SL_2(\bF_5)$ has the effect of forgetting the level structure. Here we just need to note that the kernel of the map $\rG_{16} \to \bP\GL_3(K)$ induced by the action of $\rG_{16}$ on the functions $[c_1c_2:c_1^2:c_2^2]$ is generated by $\bZ/5$ and $\{\pm I_2\}$.

\begin{remark}
Since the ring of invariants of $A \otimes \bigwedge^2 B$ under $G$ is the same as the ring of invariants of $\fh$ under $W$, we get a similar interpretation for orbits in an open subset of $\bP(A \otimes \bigwedge^2 B)$. It would be interesting to give an interpretation (similar to level structure) for the actual vectors of the open subset of $\bP(A \otimes \bigwedge^2 B)$.
\end{remark}

\begin{remark}
It is also interesting to study the Pfaffian loci for nilpotent vectors (i.e., not belonging to the Cartan). The generic nilpotent orbit has the representative
\[
[5;15]+[2;34]+[1;12]+[3;35]+[5;24]+[4;13]+[2;25]+[4;45],
\]
where $[i;jk]$ means $a_i \otimes b_j \wedge b_k$. It gives the matrix
\[
\begin{pmatrix}0& z_{1}& z_{4}& 0&  z_{5}\\
  {-z_{1}}& 0& 0& z_{5}&  z_{2}\\
  {-z_{4}}& 0& 0& z_{2}&  z_{3}\\
  0& {-z_{5}}& {-z_{2}}& 0&  z_{4}\\
  {-z_{5}}& {-z_{2}}& {-z_{3}}& {-z_{4}}& 0
\end{pmatrix}.
\]
The $4 \times 4$ Pfaffians give the ideal $(z_{1} z_{2}-z_{4}z_{5}, z_{1}z_{3}-z_{2} z_{4}, z_{1} z_{4}+z_{5}^{2}, z_{4}^{2}+z_{2} z_{5},z_{2}^{2}-z_{3} z_{5})$ which defines a rational cuspidal quintic whose cusp point
is $[0:0:1:0:0]$. It has the parametrization
\[
[s:t] \mapsto [\sqrt{-1} t^5 : s^3t^2 : s^5 : \sqrt{-1} s^2t^3 : st^4] .
\]
The group of automorphisms of this curve that extend to automorphisms of $\bP^4$ is generated by scaling $t$, so the orbit of this curve in $A \otimes \bigwedge^2 B$ has codimension $2$ (since the group $G$ we use has dimension $49$), which verifies that its orbit closure is a component of the nullcone (in fact, the nullcone is irreducible in this case).

Some other degenerations of the elliptic quintic curve are explored in \cite[\S 4.3]{gsw}.
\end{remark}

\subsection{Shioda's level 5 modular surface}

If we treat $c_1$ and $c_2$ as new coordinates, the $5$ equations \eqref{eqn:deg5pfaff} define Shioda's modular surface $S(5) \subset \bP^1 \times \bP(A^*)$. More precisely, define
\begin{align*}
S(5) &= \{(\bc, z) \in \bP^1 \times \bP(A^*) \mid \rank(\Psi_{\bc}(z)) < 4\},\\
S(5)^\circ &= \{(\bc, z) \in \bP^1 \times \bP(A^*) \mid \rank(\Psi_{\bc}(z)) < 4,\ \Delta(\bc) \ne 0\}.
\end{align*}
The projection of $S(5)$ to $\bP(A^*)$ is a birational model for $S(5)$ which we denote by $S(5)_{15}$ (since it has degree $15$). 

\begin{proposition}
$S(5)$ and $S(5)_{15}$ are irreducible surfaces.
\end{proposition}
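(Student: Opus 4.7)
Since $S(5)_{15}$ is the closed image of $S(5)$ under the projection $\bP^1 \times \bP(A^*) \to \bP(A^*)$, it suffices to prove $S(5)$ is irreducible. The plan is to show that $S(5)^\circ$ is irreducible of dimension $2$ and that $\overline{S(5)^\circ} = S(5)$.

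Over $U := \bP^1 \setminus V(\Delta)$, the fiber of $\pi_1 \colon S(5)^\circ \to U$ at $\bc$ is the smooth elliptic quintic of \S\ref{ss:deg5genus1}, an irreducible $1$-dimensional subscheme of $\bP^4$ with Hilbert polynomial $5t$. Because the Hilbert polynomial is constant over the reduced base $U$, the morphism $\pi_1|_{S(5)^\circ}$ is flat, and since the fibers are smooth the morphism is in fact smooth. A smooth proper family of geometrically connected curves over a connected smooth base has smooth connected total space, so $S(5)^\circ$ is irreducible of dimension $2$. Setting $Z := \overline{S(5)^\circ}$, we obtain an irreducible $2$-dimensional closed subscheme $Z \subseteq S(5)$ of $\bP^1 \times \bP(A^*)$.

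Next I would identify the $12$ special fibers $F_{\bc_0}$ with $\bc_0 \in V(\Delta)$. Substituting $\bc_0 = (1:0)$ in \eqref{eqn:deg5pfaff} yields the monomial ideal $(z_1 z_3,\ z_1 z_4,\ z_2 z_4,\ z_2 z_5,\ z_3 z_5)$, whose zero locus in $\bP^4$ is the union of the five coordinate lines corresponding to the five minimal vertex covers of size $3$ of the $5$-cycle determined by these monomials. These five lines meet cyclically, forming a ``pentagon'': a reduced connected curve of degree $5$ and arithmetic genus $1$, with Hilbert polynomial $5t$. The Weyl group $W = \rG_{16}$ acts on $\bP^1 = \bP(\fh)$ through its quotient by scalars $\bP\SL_2(\bF_5) \cong A_5$, and this action is transitive on the $12$ points of $V(\Delta)$ (the vertices of an icosahedron), so every $F_{\bc_0}$ with $\bc_0 \in V(\Delta)$ is projectively equivalent to the pentagon and in particular has Hilbert polynomial $5t$.

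Finally, $Z \to \bP^1$ is flat since $Z$ is an integral scheme dominating the smooth curve $\bP^1$ (torsion-free over a Dedekind scheme is flat). By constancy of the Hilbert polynomial in a flat family, each fiber $Z_{\bc_0}$ has Hilbert polynomial $5t$. The inclusion $Z_{\bc_0} \subseteq F_{\bc_0}$ of reduced curves of the same degree $5$ in $\bP^4$ then forces $Z_{\bc_0} = F_{\bc_0}$ set-theoretically, so $Z$ and $S(5)$ coincide as subsets of $\bP^1 \times \bP(A^*)$ and $S(5)$ is irreducible. The main obstacle is the analysis of the $12$ special fibers, which reduces via the transitive $A_5$-action on $V(\Delta)$ to the single explicit Pfaffian computation at $\bc_0 = (1:0)$.
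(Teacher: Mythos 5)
Your proof is correct, but its second half takes a genuinely different route from the paper's. The first step --- irreducibility of $S(5)^\circ$ as a fibration in smooth irreducible curves over an irreducible base --- is common to both arguments. After that, the paper never identifies the closure $\overline{S(5)^\circ}$ fiber by fiber: it invokes the generic perfection theorem \cite[Theorem 3.5]{bv} to guarantee that every irreducible component of the Pfaffian degeneracy locus $S(5)$ has codimension at most $3$ (hence dimension at least $2$), and then observes that $S(5)\setminus S(5)^\circ$, being fibered in curves over the $12$ points of $V(\Delta)$, is only $1$-dimensional, so it cannot carry a component; irreducibility follows at once. You instead prove the stronger statement that $S(5)=\overline{S(5)^\circ}$ set-theoretically, by showing that the closure $Z$ is flat over $\bP^1$ (torsion-free over a Dedekind base), that all its fibers have Hilbert polynomial $5t$, and that each of the $12$ degenerate fibers of $S(5)$ is a reduced cycle of five lines, so that a degree-$5$ closed subscheme of it must exhaust it. Your approach buys extra information of independent interest: flatness of the family over all of $\bP^1$ and the explicit identification of the degenerate fibers as N\'eron pentagons, which is the expected structure for Shioda's surface. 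The paper's approach buys brevity and robustness --- it needs only the dimension of the degenerate fibers, not their structure --- and it is the argument that scales to the nine-dimensional case in Proposition~\ref{prop:CS=CSpf}, where the analysis of all degenerate fibers occupies \S\ref{sec:w39degenerations}.

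Two points should be tightened. First, transporting the computation at $\bc_0=(1{:}0)$ to the other eleven points of $V(\Delta)$ requires not only that $\bP\SL_2(\bF_5)$ act transitively on these points, but also that the Weyl group action on $\fh$ be realized by elements of $G=\SL(A)\times\SL(B)$ normalizing the Cartan subspace, so that the Pfaffian construction is equivariant and fibers over a $W$-orbit are genuinely projectively equivalent; this is true in Vinberg's setup, but you use it silently. Second, you assert that $Z_{\bc_0}$ is reduced, which does not follow from $Z$ being integral (special fibers of an integral family need not be reduced); fortunately it is not needed: since $F_{\bc_0}$ is reduced, the local ring at the generic point of each of its five lines is a field, so each line appears with multiplicity at most one in any closed subscheme of $F_{\bc_0}$, and a closed subscheme of degree $5$ therefore contains all five lines and coincides with $F_{\bc_0}$ set-theoretically.
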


\begin{proof}
As discussed in \S\ref{ss:deg5genus1}, the fibers of $S(5)^\circ \to \bP^1$ are smooth irreducible curves, and hence $S(5)^\circ$ is irreducible \cite[Exercise 14.3]{eisenbud} of codimension $3$. Since $S(5)$ is defined as a Pfaffian degeneracy locus, each of its irreducible components has codimension at most 3 by the generic perfection theorem \cite[Theorem 3.5]{bv}. The fibers of $S(5) \setminus S(5)^\circ$ over $\bP^1$ are singular curves, so $S(5) \setminus S(5)^\circ$ is $1$-dimensional. In particular, it cannot contribute an irreducible component, and so $S(5)$ is irreducible. The same is true for $S(5)_{15}$ since it is the image of $S(5)$.
\end{proof}

\begin{theorem}[{Barth--Hulek--Moore \cite[\S 1.1]{bhm}}] \label{thm:shiodasurface}
The prime ideal of $S(5)_{15}$ is generated by the maximal minors of the matrix
\[
\BHM(z) = \begin{pmatrix}
z_1^2 & z_2^2 & z_3^2 & z_4^2 & z_5^2 \\ 
z_2z_5 & z_1z_3 & z_2z_4 & z_3z_5 & z_1z_4\\
z_3z_4 & z_4z_5 & z_1z_5 & z_1z_2 & z_2z_3
\end{pmatrix}.
\]
The singular locus of $S(5)_{15}$ is set-theoretically defined by the $2 \times 2$ minors of $\BHM(z)$. (For the full ideal, see Remark~\ref{rmk:S5-comments}.)
\end{theorem}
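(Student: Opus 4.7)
The plan is to identify $S(5)_{15}$ scheme-theoretically with the determinantal variety defined by the maximal minors of $\BHM(z)$, and then to analyze the singular locus via the Jacobian formalism for determinantal ideals.

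The starting observation is that the five Pfaffians in \eqref{eqn:deg5pfaff} are precisely the components of the row-vector product
\[
\begin{pmatrix} c_1 c_2 & -c_1^2 & c_2^2 \end{pmatrix} \cdot \BHM(z).
\]
Thus a point $z \in \bP(A^*)$ lies in $S(5)_{15}$ if and only if the left kernel of $\BHM(z)$ meets the smooth conic $Q = \{v_1^2 + v_2 v_3 = 0\} \subset \bP^2$, which is the image of the Veronese map $[c_1:c_2] \mapsto [c_1 c_2 : -c_1^2 : c_2^2]$. The inclusion $S(5)_{15} \subseteq V(I_3(\BHM))$ is immediate. For the reverse set-theoretic inclusion, when the left kernel has projective dimension $\ge 1$ (i.e.\ $\rank \BHM(z) \le 1$), its projectivization contains a line in $\bP^2$ which meets $Q$; the delicate case is $\rank \BHM(z) = 2$, where the unique kernel direction must be shown to lie on $Q$. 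I would handle this by dimension bookkeeping, using that the preceding proposition gives $S(5)_{15}$ irreducible of dimension $2$ and bounding $\dim V(I_3(\BHM)) \le 2$ by exhibiting an explicit smooth point, so the locus where the rank-$2$ kernel direction is off $Q$ cannot contribute a new component.

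Upgrading set-theoretic equality to an equality of ideals is the central obstacle: $\BHM(z)$ is a highly specialized $3 \times 5$ matrix of quadrics whose rank-$\le 2$ locus has codimension $2$ in $\bP^4$, not the expected codimension $3$, so the Eagon--Northcott complex does not resolve $I_3(\BHM)$. I would exploit the $H_5$-equivariance of $\BHM$ (under which $\sigma$ permutes the columns up to scalars, and $\tau$ rescales column $i$ uniformly by $\zeta^{2i}$) to write down explicit first syzygies, and then verify by direct computation that $K[z_1,\dots,z_5] / I_3(\BHM)$ is arithmetically Cohen--Macaulay of codimension $2$ with a Hilbert--Burch-style minimal free resolution. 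Once ACM of the correct codimension is established, the ideal is automatically unmixed and radical, so the set-theoretic equality from the previous paragraph promotes to equality with the prime ideal of the irreducible surface $S(5)_{15}$.

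For the singular locus, I would apply the Jacobian criterion to $V(I_3(\BHM))$: at a point where $\rank \BHM(z) = 2$ the standard local trivialization of a determinantal variety in generic form yields smoothness, while at a point where $\rank \BHM(z) \le 1$ the Jacobian matrix of the ten maximal minors drops rank and one verifies a singularity. The scheme-theoretic identification of the singular locus with $V(I_2(\BHM))$ then follows by a local Fitting-ideal calculation using the minimal free resolution constructed in the previous step. The principal difficulty throughout is that the non-generic nature of $\BHM$ prevents a direct appeal to standard determinantal-ideal machinery, so both the Cohen--Macaulayness and the Fitting-ideal computation must be done by hand using the Heisenberg symmetry of the matrix.
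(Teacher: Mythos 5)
Your reduction of membership in $S(5)_{15}$ to the condition that the left kernel of $\BHM(z)$ meets the conic $Q=\{v_1^2+v_2v_3=0\}$ is exactly the paper's setup, and the rank $\le 1$ case is fine. The gap is in the rank-$2$ case, which is the heart of the matter. Exhibiting one smooth point of the rank-$\le 2$ locus only bounds the dimension of the component through that point, not of the whole locus; and even a global bound of $2$ on the dimension would not rule out a second surface (or a curve --- generic perfection only forces each component to have codimension at most $3$ in $\bP^4$, i.e.\ dimension at least $1$) on which the rank-$2$ kernel direction misses $Q$. Such a hypothetical component need not meet $S(5)_{15}$ at all, so the irreducibility of $S(5)_{15}$ gives no traction against it. What is actually needed is an algebraic identity, and this is precisely how the paper proceeds: it checks (in {\tt Macaulay 2}) that each Laplace-expansion kernel vector $v(i,j)$ satisfies $v(i,j)_1^2 = -v(i,j)_2v(i,j)_3$ modulo the ideal of maximal minors, which forces the kernel of any rank-$2$ specialization onto $Q$. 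No dimension count substitutes for this computation.

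The ideal-theoretic step of your proposal rests on a false premise. The quotient of $K[z_1,\dots,z_5]$ by the ideal of maximal minors of $\BHM(z)$ is \emph{not} arithmetically Cohen--Macaulay: the Betti table recorded in the remark following the theorem (total Betti numbers $1,10,10,1$) has projective dimension $3$, so the depth of the coordinate ring is $5-3=2$ while its dimension is $3$. In particular there is no Hilbert--Burch-style resolution (Hilbert--Burch requires a perfect codimension-$2$ ideal), and the verification you propose would simply fail. Separately, even granting ACM, the inference ``ACM $\Rightarrow$ unmixed $\Rightarrow$ radical'' is incorrect: Cohen--Macaulayness gives unmixedness and Serre's condition $(S_1)$, but not generic reducedness --- $(x^2)$ is a Cohen--Macaulay ideal that is not radical --- so radicality additionally requires $(R_0)$, a separate check you never make. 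The paper instead verifies radicality by a direct {\tt Macaulay 2} computation and deduces primeness from the irreducibility of $S(5)_{15}$ established in the preceding proposition. Your singular-locus argument inherits both problems, since it invokes the nonexistent resolution and the local structure of generic determinantal varieties, which (as you yourself note) $\BHM(z)$ does not have; the paper settles that part by direct calculation as well.
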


\begin{proof}
If $[z_1 : \cdots : z_5]$ lies on an elliptic normal quintic for some $c_1, c_2$, then $\BHM(z)$ does not have full rank (the linear dependence among the rows being given by $(c_1 c_2, -c_1^2, c_2^2)$), so $S(5)_{15}$ is contained in the vanishing locus of the maximal minors.

Conversely, suppose that for some $[z_1 : \cdots : z_5]$, $\BHM(z)$ does not have full rank, but suppose that it has rank $2$. For each choice of two columns $i,j$, we can construct linear dependencies $v(i,j)$ among the rows using Laplace expansion and the $2 \times 2$ minors for those two columns. For example, here is the linear dependence if we choose the first two columns:
\[
v(1,2) = (z_1z_3^2z_4 - z_2z_4z_5^2, z_1^2 z_4z_5 - z_2^2 z_3z_4, z_2^3z_5 - z_1^3z_3)
\]
One can check directly (say with {\tt Macaulay 2}) that for all $i,j$, we have $v(i,j)_1^2 = - v(i,j)_2v(i,j)_3$ modulo the ideal of maximal minors of $\BHM(z)$. But this says that $v(i,j) = (c_1 c_2, -c_1^2, c_2^2)$ for some $c_1,c_2$. Since $\BHM(z)$ has rank $2$, we can find $i,j$ so that the linear dependence is not identically $0$, which gives us $(c_1, c_2) \ne (0,0)$.

To check that the ideal is radical, the example is small enough that it can be done directly with the {\tt radical} command in {\tt Macaulay 2}. 

The statements about the locus where $\BHM(z)$ has rank $1$ follow by direct calculation.
\end{proof}

\begin{remark} \label{rmk:S5-comments}
Here is the explicit computation of the ideal in {\tt Macaulay 2} ($I$ denotes the ideal generated by the $5$ equations \eqref{eqn:deg5pfaff}):
\begin{Verbatim}[samepage=true]
eliminate({c_1,c_2}, quotient(I, ideal(c_1*c_2)))
\end{Verbatim}

\begin{enumerate}
\item The graded Betti table of $S(5)_{15}$ is a ``fake Gorenstein'' complex:
\small \begin{Verbatim}[samepage=true]
       0  1  2 3
total: 1 10 10 1
    0: 1  .  . .
       ...
    5: . 10 10 .
    6: .  .  . .
    7: .  .  . 1
\end{Verbatim}
\normalsize 
Its degree is $15$ and its Hilbert series is
\[
\frac{1+2 T+3 T^{2}+4 T^{3}+5 T^{4}+6 T^{5}-3 T^{6}-2 T^{7}-T^{8}}{({1-T})^{3}}.
\]
Here is the graded Betti table of $\coker \BHM(z)$, which is Cohen--Macaulay:
\small \begin{Verbatim}[samepage=true]
       0 1 2
total: 3 5 2
    0: 3 . .
    1: . 5 .
    2: . . .
    3: . . 2
\end{Verbatim}
\normalsize 
The second differential in this complex is
\[
\begin{pmatrix}
-z_{2}^{2} z_{4}+z_{3} z_{5}^{2}& z_{2} z_{3}^{2}-z_{4}^{2} z_{5}\\
      z_{1}^{2} z_{4}-z_{3}^{2} z_{5}&      z_{3} z_{4}^{2}-z_{1} z_{5}^{2}\\
      -z_{1} z_{4}^{2}+z_{2}^{2} z_{5}&      -z_{1}^{2} z_{2}+z_{4} z_{5}^{2}\\
      z_{1} z_{3}^{2}-z_{2} z_{5}^{2}&      -z_{2}^{2} z_{3}+z_{1}^{2} z_{5}\\
      -z_{1}^{2} z_{3}+z_{2} z_{4}^{2}&      z_{1} z_{2}^{2}-z_{3}^{2} z_{4}
      \end{pmatrix}.
\]
Its $2 \times 2$ minors generate the prime ideal of $S(5)_{15}$ and its entries generate the radical ideal that defines the $30$ singular points. 

\item The Betti table for the normalization of $S(5)_{15}$ as a module over the homogeneous coordinate ring of $\bP^4$ is
\small \begin{Verbatim}[samepage=true]
       0  1 2
total: 6 10 4
    0: 1  . .
    1: .  . .
    2: 5 10 .
    3: .  . 4
\end{Verbatim}
\normalsize
Its Hilbert series is $(1+2T+8T^2+4T^3) / (1-T)^3$. 

\item The Betti table of the radical of the ideal of $2 \times 2$ minors of $\BHM(z)$ is
\small \begin{Verbatim}[samepage=true]
       0  1  2  3 4
total: 1 10 14 10 5
    0: 1  .  .  . .
    1: .  .  .  . .
    2: . 10 10  . .
    3: .  .  4  . .
    4: .  .  . 10 5
\end{Verbatim}
\normalsize

The determinantal expression for $S(5)_{15}$ shows that every point of $S(5)_{15}$ except these $30$ points lies on a unique elliptic quintic. \qedhere
\end{enumerate}
\end{remark}

\begin{remark}
Let $\zeta$ be a primitive $5$th root of unity. The space of Heisenberg invariant quintics is $6$-dimensional and its zero locus is the union of $25$ disjoint lines: for each $i,j \in \{1,\dots,5\}$, we get a line whose equations are 
\[
z_i = z_{i+2} + \zeta^j z_{i+3} = \zeta^{2j} z_{i+1} + z_{i+4} = 0. 
\]
Using the matrix $\BHM(z)$ in Theorem~\ref{thm:shiodasurface}, we see that this line is in $S(5)_{15}$. In fact, it does not contain any of the $30$ singular points, so it intersects each genus $1$ curve (including the $12$ degenerate ones) in exactly one point. Hence the map $S(5)_{15} \dashrightarrow \bP^1$ (where the coordinates on $\bP^1$ are $[c_1:c_2]$) has a section corresponding to each of these lines. For example, if we have a point $z$ on the line
\[
z_1 = z_3 + \zeta z_4 = \zeta^2 z_2 + z_5 = 0,
\]
then the kernel of $\BHM(z)$ is $[-\zeta z_2 z_3 : -\zeta^2 z_3^2 : z_2^2]$ and this is the image of the point $[c_1:c_2] = [-\zeta z_3: z_2]$. Hence the section $\bP^1 \to S(5)_{15}$ is 
\[
[c_1:c_2] \mapsto [0:c_2:-\zeta^4 c_1: \zeta^3 c_1: -\zeta^2 c_2]. \qedhere
\]
\end{remark}

\section{$(3,3)$-polarized Abelian surfaces} \label{sec:33}

Let $K$ be an algebraically closed field of characteristic different from $2$ and $3$.

\subsection{The Vinberg representation} \label{ss:coblecubics}

Let $A$ be a $9$-dimensional vector space with basis $a_1, \dots, a_9$ and set $G = \SL(A)$. Let $z_1, \dots, z_9$ be the dual basis, which act as coordinates on the projective space $\bP(A^*)$ of lines in $A^*$ (or hyperplanes in $A$). The $G$-representation $\bigwedge^3 A$ is an example of a Vinberg representation: there is an order $3$ automorphism of the Lie algebra $\fe_8$ of type $\rE_8$, which corresponds to the simple root $\alpha_2$ of the affine Dynkin diagram in Bourbaki notation (see \eqref{eqn:affineE8} for the diagram and see \cite[\S 8.6]{kac} for details on finite order automorphisms of simple Lie algebras), and whose eigenspace decomposition is
\[
\fsl(A) \oplus \bigwedge^3 A \oplus \bigwedge^6 A.
\]
The maximal dimension of a Cartan subspace in $\bigwedge^3 A$ is $4$ and the Weyl group is Shephard--Todd group $\rG_{32}$ \cite[\S 9]{vinberg}. We note that $\rG_{32} \cong \bZ/3 \times \Sp_4(\bF_3)$. There are $80$ complex reflections in $\rG_{32}$. They are all of order $3$, so they come in pairs and define $40$ reflection hyperplanes.

As in \`Elashvili--Vinberg \cite{ev}, we can pick a Cartan subspace $\fh$ for $\bigwedge^3 A$ which has the basis
\begin{align*}
h_1 &= a_1 \wedge a_2 \wedge a_3 + a_4 \wedge a_5 \wedge a_6 + a_7 \wedge a_8 \wedge a_9,\\*
h_2 &= a_1 \wedge a_4 \wedge a_7 + a_2 \wedge a_5 \wedge a_8 + a_3 \wedge a_6 \wedge a_9,\\*
h_3 &= a_1 \wedge a_5 \wedge a_9 + a_2 \wedge a_6 \wedge a_7 + a_3 \wedge a_4 \wedge a_8,\\*
h_4 &= a_1 \wedge a_6 \wedge a_8 + a_2 \wedge a_4 \wedge a_9 + a_3 \wedge a_5 \wedge a_7.
\end{align*}
These basis vectors correspond to the points in $\bP^1_{\bF_3}$. More precisely, we fix a line, wedge together its vectors, and sum over the three translates of the lines. Here we choose to use $1, \dots, 9$ rather than $(0,0), \dots, (2,2)$ for compactness of notation, but here is a dictionary:
\begin{align} \label{eqn:affineplane}
\begin{matrix} 1 & 2 & 3 \\ 4 & 5 & 6 \\ 7 & 8 & 9 \end{matrix} =
\begin{matrix} (0,0) & (0,1) & (0,2) \\ (1,0) & (1,1) & (1,2) \\ (2,0) & (2,1) & (2,2) \end{matrix}
\end{align}
Here we have chosen an origin  in $\bA^2_{\bF_3}$, so we also have the involution 
\begin{equation} \label{eqn:iota}
\iota \colon x_{i,j} \mapsto x_{-i,-j},
\end{equation} 
which is the multiplication by $-1$ on $\bF_3^2$. We define the {\bf Burkhardt space} $\bP_\rB \cong \bP^4$ and {\bf Maschke space} $\bP_\rM \cong \bP^3$ as the $+1$ and $-1$ eigenspaces, respectively, of $\iota$. More explicitly, 
\begin{align*}
\bP_\rB &: z_2 - z_3 = z_4 - z_7 = z_5 - z_9 = z_6 - z_8 = 0,\\
\bP_\rM &: z_1 = z_2 + z_3 = z_4 + z_7 = z_5 + z_9 = z_6 + z_8 = 0.
\end{align*}

A general semisimple element is conjugate to an element of the form 
\begin{align} \label{eqn:w39general}
c_1 h_1 + c_2 h_2 + c_3 h_3 + c_4 h_4
\end{align}
for $c_1, c_2, c_3, c_4 \in K$, and two such elements are conjugate under $G$ if and only if they are conjugate to one another via the complex reflection group $W = \rG_{32}$. 

The Cartan subspace $\fh$ has the following alternative description. We have an action of the Heisenberg group $H_{3,2}$ (see \S\ref{sec:heisenberg}) on $A$, and $\fh$ is the invariant subspace $(\bigwedge^3 A)^{H_{3,2}}$. From this, we also get the action of $N(H_{3,2}) / H_{3,2} \cong \Sp_4(\bF_3)$ on $\fh$. If we also add the scalar matrices corresponding to cube roots of unity, then we get an action of the group $\rG_{32}$ on the Cartan subspace. Here are two generators for $\rG_{32}$:
\[
\mu = -\omega \begin{pmatrix}
1& 0& 0&  0\\
0& {-1}& 0&  0\\
0&  0&  0&  {-1}\\
0&  0& {-1}&  0
\end{pmatrix},
\qquad
\nu = \frac{1}{\omega - \omega^2}
\begin{pmatrix}0&  1&  1&   1\\
      0&     1&      -\omega-1&      \omega\\
      0&      {-1}&      {-\omega}&      \omega+1\\
      -2 \omega-1&      0&      0&      0
\end{pmatrix}
\]
(here $\omega$ is a fixed primitive cube root of unity).

We now state the main property of this representation and its connections to geometry. This result was essentially known to Burkhardt \cite{burkhardt} (see also \cite[\S 4]{dl}), but we would like to fit it into the context of the Vinberg representation $\bigwedge^3 A$, and this direct relationship does not seem to be written down anywhere.

\begin{theorem} \label{thm:M23minus}
Let $\fh^\circ$ be the complement of the $40$ reflection hyperplanes in $\fh$. Then $\bP(\fh^\circ)$ is in natural bijection with the set of smooth genus $2$ curves together with a marked Weierstrass point and full level $3$ structure on its Jacobian variety. Furthermore, $\rG_{32} / (\bZ/3 \times \{\pm I_4\}) \cong \bP\Sp_4(\bF_3)$ acts freely on $\bP(\fh^\circ)$ and taking the quotient $\bP(\fh^\circ) / \bP\Sp_4(\bF_3)$ has the effect of forgetting the level structure.
\end{theorem}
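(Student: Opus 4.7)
My plan is to construct the bijection directly via \cite{gsw}, use Heisenberg invariance to furnish the level structure, and then analyze the $\rG_{32}$-action on both sides. The classical parametrization of $(3,3)$-polarized Abelian surfaces by (an open subset of) the Burkhardt quartic is the content of \cite{burkhardt, coble1, geer}; what really needs verification is its compatibility with the Vinberg slice.

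Given $v \in \fh^\circ$, the construction of \cite[\S5]{gsw} produces a smooth Abelian surface $X_v \subset \bP(A^*)$ embedded by an indecomposable $(3,3)$-polarization $\cL_v$. Because $\fh = (\bigwedge^3 A)^{H_{3,2}}$, the Heisenberg group $H_{3,2}$ stabilizes the ideal of $X_v$; the induced translation action of $K(\cL_v) \cong (\bZ/3)^4$ on $X_v$, together with the fact that the Schr\"odinger generators $(\tau_1,\tau_2,\sigma_1,\sigma_2)$ form a symplectic basis for the commutator pairing, yields a full level $3$ structure on $(X_v,\cL_v)$. Every indecomposably $(3,3)$-polarized Abelian surface over $K$ is either the Jacobian of a smooth genus $2$ curve or a product of two elliptic curves; the product case can be excluded for $v \in \fh^\circ$ by exhibiting a single specialization with simple Jacobian and invoking connectedness of $\fh^\circ$. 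Thus $X_v = J(C_v)$ for a unique smooth genus $2$ curve $C_v$, and the distinguished origin of $X_v$, being $H_{3,2}$-symmetric and hence intrinsic to the embedding, pulls back under Abel--Jacobi to a canonically marked Weierstrass point $w_v \in C_v$.

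To conclude bijectivity, note that both sides have dimension $3$: $\dim \bP(\fh^\circ) = 3$, and marking a Weierstrass point together with a level $3$ structure is a finite-to-one condition over the $3$-dimensional moduli space $M_2$. Injectivity follows because two elements of $\fh^\circ$ give isomorphic $(X_v, \cL_v)$ with compatible level structure exactly when they lie in the same $\rG_{32}$-orbit, and $\rG_{32}$ acts freely on $\fh^\circ$ by definition of $\fh^\circ$ (complement of the reflection hyperplanes). Surjectivity is the classical parametrization cited above, translated into the Vinberg framework via \cite{gsw}. For the quotient statement, the center $\bZ/3 \subset \rG_{32}$ acts by scalar multiplication on $\fh$, hence trivially on $\bP(\fh^\circ)$, and $\{\pm I_4\} \subset \Sp_4(\bF_3)$ acts trivially on level structures by the convention $(b_1,b_2) \sim (-b_1,-b_2)$; thus $\rG_{32}/(\bZ/3 \times \{\pm I_4\}) \cong \bP\Sp_4(\bF_3)$ acts simply transitively on level structures, and $\bP(\fh^\circ)/\bP\Sp_4(\bF_3)$ parametrizes smooth genus $2$ curves with a marked Weierstrass point.

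The main obstacle I anticipate is the clean, canonical identification of the marked Weierstrass point $w_v$ with a geometric feature of $X_v$ intrinsic to the Vinberg construction (rather than just \emph{some} Weierstrass point chosen by convention); this is most transparent through the Fano-variety-of-cubic-hypersurface perspective of Theorem~\ref{thm:fanoP4}, to which I would defer the detailed argument.
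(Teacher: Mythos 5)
Your overall route (the construction of \cite{gsw}, Heisenberg invariance giving the level structure, $\rG_{32}$-equivariance giving the quotient statement) is the same as the paper's, but your injectivity argument has a genuine gap, and it sits exactly at the point you chose to defer. The embedded surface $X_v$ together with its level structure depends only on the coefficients $[\gamma_1 : \cdots : \gamma_5]$ of the Jacobian ideal \eqref{eqn:jacobian}, i.e., only on the image of $v$ under the degree-$6$ map $\gamma \colon \bP^3 \dashrightarrow \cB$ of \eqref{eqn:cmap}. So $v \mapsto (X_v, \cL_v, \text{level structure})$ is generically $6$-to-$1$; moreover, since $\gamma$ is equivariant and $\bP\Sp_4(\bF_3)$ acts faithfully on $\cB$ (hence with trivial stabilizer at a generic point), the six points of a generic fiber lie in six \emph{distinct} $\rG_{32}$-orbits. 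This falsifies your claim that two elements of $\fh^\circ$ give isomorphic $(X_v,\cL_v)$ with compatible level structure exactly when they lie in the same $\rG_{32}$-orbit, and injectivity cannot then be recovered from freeness of the action. What distinguishes the six points of a fiber is precisely the marked Weierstrass point: equivalently, the choice of identity point $z_\bc$ of \eqref{eqn:universalsection} among the six points of $\bP_\rM \cap X_\bc$. That identification --- via Theorem~\ref{thm:fanoP4}, Theorem~\ref{thm:summary}, and the reverse construction of Remark~\ref{rmk:w39section}, which recovers $[c_1:c_2:c_3:c_4]$ itself (not merely $\gamma(\bc)$) from the surface together with its origin --- is exactly how the paper obtains bijectivity. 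So the item you postpone as a final polish is where the content of the theorem lives; the dimension count you offer is only a consistency check and proves nothing.

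Two smaller problems. Your exclusion of products of elliptic curves by ``one specialization with simple Jacobian plus connectedness of $\fh^\circ$'' is not valid: the decomposable locus is closed but not open in a family, so a connected base with one simple fiber can still carry decomposable fibers elsewhere (and indeed products of plane cubics do occur in the closure of this family, over the $45$ singular points of the Burkhardt quartic, cf.\ \S\ref{ss:burkhardt}). The correct input is the indecomposability of the polarization, \cite[Proposition 5.6]{gsw}, which you already invoked in your first paragraph, making this step redundant as well as wrong; note that an indecomposable $(3,3)$-polarization forces $X_v$ to be a Jacobian outright, since $\cL_v = 3\Theta$ for a principal polarization $\Theta$ that is itself indecomposable. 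Finally, Steinberg's theorem does give freeness of $\rG_{32}$ on the affine complement $\fh^\circ$, but the theorem asserts freeness of $\bP\Sp_4(\bF_3)$ on $\bP(\fh^\circ)$, i.e., that no non-central element fixes a \emph{line} in $\fh^\circ$; in the paper this follows from equivariance of the construction together with simple transitivity of $\bP\Sp_4(\bF_3)$ on level structures --- which again requires the full correspondence, marked Weierstrass point included.
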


\begin{proof}
In the next section, given a vector in $\fh^\circ$, we construct an Abelian surface embedded in $\bP^8$ of the form $(\Jac(C), 3\Theta)$, where $C$ is a smooth genus 2 curve, together with a marked Weierstrass point on $C$. The embedding respects the Heisenberg group action, so gives a full level $3$ structure. The embedding is equivalent to the choice of $5$ scalars, which are determined by the functions $\gamma_1, \dots, \gamma_5$ in \S\ref{ss:burkhardt}, and the image of the map $\rG_{32} \to \bP\GL_5(K)$ given by the action of $\rG_{32}$ on the functions $[\gamma_1:  \dots : \gamma_5]$ is precisely $\bP\Sp_4(\bF_3)$, so we get all level structures on a given Abelian surface. This construction only depends on the vector up to scaling, and taking another vector in its $W$-orbit gives the same Abelian surface (except with a different level structure). Proposition~\ref{prop:w39disc} says that ``general'' is the same as belonging to $\fh^\circ$. In Remark~\ref{rmk:w39section} we explain how to reverse this construction.
\end{proof}

\begin{remark}
Given an element $\bv \in \bigwedge^3 A$, we can form a map $\bigwedge^3 A \to \bigwedge^6 A$ which is multiplication by $\bv$. Since the Lie bracket in $\fe_8 = \fsl(A) \oplus \bigwedge^3 A \oplus \bigwedge^6 A$ is equivariant with respect to the subalgebra $\fsl(A)$, this map agrees with multiplying by $\bv$ considered as an element of $\fe_8$ (up to an overall choice of scalar factor). A general element $\bv$ is semisimple and belongs to a unique Cartan subspace, so the kernel of this multiplication map contains this Cartan subspace. One can find elements $\bv$ (by randomly sampling with a computer, for example) such that the kernel has dimension exactly $4$. By semicontinuity, we see that the kernel must always have dimension at least $4$.
\end{remark}

\subsection{Coble cubics and Abelian surfaces}
\label{sec:coblecubics}

There is a natural $G$-equivariant isomorphism
\[
\bigwedge^3 A = \rH^0(\bP(A^*); \bigwedge^2 \cQ^* \otimes \cO(1))
\]
where $\cQ$ is the hyperplane bundle on $\bP(A^*)$. We denote $\bP(A^*)$ by $\bP^8$. We have a natural inclusion
\[
\bigwedge^2 \cQ^* \otimes \cO(1) \subset \bigwedge^2 A \otimes \cO(1),
\]
so a vector $\bv \in \bigwedge^3 A$ may be interpreted as a $9 \times 9$ skew-symmetric matrix $\Phi_\bv(z)$ with linear entries in $\bP^8$. The map on sections
\[
\bigwedge^3 A \to \bigwedge^2 A \otimes A
\]
is the usual comultiplication map on the exterior algebra.

By \cite[\S 5]{gsw}, we know that for a general choice of vector $\bv \in \bigwedge^3 A$, the ideals of $8 \times 8$ Pfaffians and $6 \times 6$ Pfaffians of $\Phi_\bv(z)$ cut out a cubic hypersurface $Y_\bv$ and a $(3,3)$-polarized Abelian surface $X_\bv$, respectively. More precisely, the $i$th principal $8 \times 8$ Pfaffian is divisible by $z_i$, and the remaining term is the cubic equation of the hypersurface (we could also phrase this in terms of saturating ideals). Furthermore, this cubic hypersurface $Y_\bv$ is singular precisely along $X_\bv$, and is known as the {\bf Coble cubic} (see \cite{beauville}) of $X_\bv$. In general, we know (from the chain rule) that the partial derivatives of the Coble cubic vanish on the set cut out by the $6 \times 6$ Pfaffians. 

If $\bv \in \fh$, then the precise meaning of ``general'' is that the discriminant $\Delta$, which is the product of the four polynomials in \eqref{eqn:discw39}, is nonzero. 

The following theorem will not be used in the remainder of the text, but gives a convenient way to recover a genus $2$ curve $C_\bv$ (in its tri-canonical embedding) whose Jacobian is $X_\bv$. When restricting to $\bv \in \fh$, this gives a family of smooth genus $2$ curves over the base $\bP^3 \setminus \{\Delta=0\}$ whose Jacobian is the universal family of Abelian surfaces that we have just discussed. The construction is given after the proof of the theorem, see also Theorem~\ref{thm:summary} for a summary.

\begin{theorem} \label{thm:fanoP4}
Pick $\bv \in \bigwedge^3 A$ so that $X_\bv$ is smooth.
\begin{compactenum}[\rm (1)]
\item Given a $4$-dimensional linear subvariety $\bP^4 \subset Y_\bv$, the intersection $\bP^4 \cap X_\bv$ is a theta-divisor of $X_\bv$. The correspondence $\bP^4 \mapsto \bP^4 \cap X_\bv$ is an isomorphism between the Fano variety of $4$-dimensional subspaces of $Y_\bv$ and the variety $X_\bv^\vee = \Pic_1(X_\bv)$.

\item Given $x \in X_\bv$, set $\cK_x = \ker(\Phi_\bv(x))$. The map $X_\bv \to \Gr(5,A^*)$ given by $x \mapsto \cK_x$ is an isomorphism from $X_\bv$ onto the Fano variety of $4$-dimensional subspaces of $Y_\bv$. 
\end{compactenum}
In particular, the map $X_{\bv}\to X_{\bv}^{\vee}$ given by $x \mapsto \bP(\cK_x) \cap X_\bv$ is an isomorphism.
\end{theorem}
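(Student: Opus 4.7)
The plan is to prove part (2) first via a direct linear-algebra calculation, invoke the classical geometry of Coble cubics for part (1), and then combine them for the ``in particular'' statement.

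For the key inclusion $\bP(\cK_x) \subset Y_\bv$, I would use a trilinear antisymmetry argument. Since the $6 \times 6$ Pfaffians of $\Phi_\bv$ cut out $X_\bv$, for $x \in X_\bv$ we have $\rank \Phi_\bv(x) \leq 4$ and $\dim \cK_x \geq 5$. Viewing $\Phi_\bv(z)(w,-) = \bv(z,w,-)$, the total antisymmetry of $\bv$ yields $\Phi_\bv(u)\,x = -\Phi_\bv(x)\,u$. Thus for any $u \in \cK_x$ not proportional to $x$, both $x$ and $u$ lie in $\ker \Phi_\bv(u)$ (the latter always, since $\bv(u,u,-) = 0$), so $\dim \ker \Phi_\bv(u) \geq 2$ and $\rank \Phi_\bv(u) \leq 7$. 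Evenness of the rank of a skew matrix sharpens this to $\leq 6$, which is the defining condition for $Y_\bv$, so $u \in Y_\bv$. To also ensure $\bP(\cK_x)$ is a genuine $\bP^4$, I would verify $\dim \cK_x = 5$ (equivalently, exclude $\rank \Phi_\bv(x) \leq 2$) at a single concrete point of the Cartan subspace and extend by semicontinuity plus irreducibility of $X_\bv$; alternatively, invoke the rank-$5$ vector-bundle description of $\ker(\Phi_\bv|_{X_\bv})$ from \cite{gsw}.

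For part (1), which is essentially Coble's classical theorem on Coble cubics (cf.\ \cite{beauville}), I would construct the inverse map $X_\bv^\vee \to F$ as follows. A theta divisor $D \subset X_\bv$ is a genus $2$ curve, embedded in $\bP^8$ by the restriction of $\cO(3\Theta)$, which has degree $D \cdot 3\Theta = 6$; Riemann--Roch gives $h^0 = 5$, so $D$ spans a $\bP^4$ in $\bP^8$. That this $\bP^4$ lies in $Y_\bv$ uses the characterization of $Y_\bv$ as the unique cubic hypersurface singular along $X_\bv$. This simultaneously realizes $\bP^4 \cap X_\bv$ as a theta divisor.

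Assembling these steps, $x \mapsto \bP(\cK_x)$ sends $X_\bv$ into $F \cong X_\bv^\vee$, and the composition $X_\bv \to X_\bv^\vee$ is a morphism of Abelian surfaces, hence (up to translation) a group homomorphism; non-constancy is immediate from a concrete example, so it is an isogeny. The main obstacle is pinning down the degree to be exactly $1$. I expect this to follow by identifying the composition with a translate of the principal polarization $\phi_\Theta \colon X_\bv \xrightarrow{\sim} X_\bv^\vee$ associated to $X_\bv = \Jac(C)$. A natural route is to compute the class of the divisor $\bP(\cK_x) \cap X_\bv$ in $X_\bv^\vee$ via Chern classes of the universal rank-$5$ kernel bundle $\cK$ on $X_\bv$; alternatively, one can verify that the induced map on tangent spaces at a single point is an isomorphism by explicitly differentiating the kernel condition.
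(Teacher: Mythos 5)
Your trilinear antisymmetry argument for the inclusion $\bP(\cK_x) \subset Y_\bv$ is correct and is in fact exactly the paper's argument (contraction $d_{x\wedge y}\bv = 0$, skew-symmetry forcing the rank down to $6$). But the rest of the proposal has two genuine gaps. First, part (1) is not a citable classical fact: the paper explicitly presents Theorem~\ref{thm:fanoP4} as a result it has not seen in the literature, and nearly all of the paper's proof is devoted to the direction you never address, namely that an \emph{arbitrary} $\bP^4 = W \subset Y_\bv$ meets $X_\bv$ in a theta divisor. This requires real work: the conormal sequence identifies $\sigma = W \cap X_\bv$ as an intersection of four quadrics in $W$; a projection-from-$W$ argument rules out $\dim \sigma = 0$ (else $X_\bv$ would be birational to a quadric, hence rational); divisibility of degrees by $3$ plus the indecomposability of the polarization rules out degree-$3$ components (which would force $X_\bv$ to be a product of elliptic curves); and a liaison-theoretic lemma shows the resulting degree-$6$ curve has arithmetic genus $2$ and exhausts $\sigma$. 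You only construct the reverse map (theta divisor $\mapsto$ its span), and even there the appeal to ``uniqueness of the cubic singular along $X_\bv$'' does not yield the inclusion of the span in $Y_\bv$; the paper instead uses the secant variety of $C$, which is a degree-$8$ hypersurface in $W$ contained in $Y_\bv$ (secant lines meet the cubic with multiplicity $\ge 4$), forcing $W \subset Y_\bv$ by degree.

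Second, your assembly step leaves the crucial degree-$1$ statement as an expectation, and it is moreover circular as set up: the rigidity argument ``a morphism of Abelian surfaces is a homomorphism up to translation'' requires knowing that the Fano variety $F$ \emph{is} the Abelian surface $X_\bv^\vee$, which is precisely the content of part (1) that you have not established. The paper avoids all of this with a short, elementary argument you are missing: if $\cK_x = \cK_{x'}$ for distinct $[x] \ne [x'] \in X_\bv$, then for every $x''$ on the line joining them and every $y \in \cK_x$ one has $d_{x'' \wedge y}\bv = 0$, so $\rank \Phi_\bv(x'') \le 4$ and the entire line lies in $X_\bv$ --- contradicting that Abelian varieties contain no rational curves. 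This gives injectivity of $x \mapsto \cK_x$ outright; combined with part (1), surjectivity follows from irreducibility of source and target, and the isomorphism from smoothness of the target. If you want to salvage your route, you must either carry out the Chern class or tangent-space computation you allude to, or better, adopt this injectivity argument, but in either case part (1) has to be proved, not cited.
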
 

\begin{proof} 
We first prove (1). Pick $W \cong \bP^4 \subset Y_\bv$. Consider the exact sequence of conormal bundles
\[
0 \to \rN^{*}_{Y_{\bv}/\bP^8}|_{W} \to \rN^{*}_{W/\bP^8} \to \rN^{*}_{W/Y_{\bv}} \to 0.
\]
We have $\rN^{*}_{W/\bP^8}=B\otimes\cO_{W}(-1)$, where $B$ is the $4$-dimensional subspace of $A$ consisting of the linear equations of $W$ in $\bP^8$, and $\rN^{*}_{Y_{\bv}/\bP^8} = \cO_{Y_{\bv}}(-3)$. The set $\sigma:=X_{\bv}\cap W$ of singular points of $Y_{\bv}$ lying on $W$ is scheme-theoretically the zero locus in $W$ of the transpose of the left arrow, i.e., the intersection of four quadrics in $W$. We claim that $\sigma$ is a curve of (arithmetic) genus $2$. For now we only know that $\dim(\sigma) \le 1$ since $\sigma \ne X_\bv$.

Assume that $\dim(\sigma)=0$. Then it is of degree $16$. Consider the ``projection with vertex $W$'' 
\[
p \colon \bP^8 \dashrightarrow \bP(B^*), \qquad \ell \mapsto \ell+\tilde{W}
\]
where $W$ is the projectivization of $\tilde{W} \subset A^*$ and $\ell \subset A$ is a line not contained in $\tilde{W}$, and we have made the identification $B^* = A^* / \tilde{W}$. Let $\tilde{X}_\bv$ be the blow-up of $X_\bv$ at $\sigma$. Then $p$ induces a map $\tilde{X}_\bv \to \bP(B^*)$, and we denote the image by $X'_\bv$.
Since $\deg(X_{\bv})=18$, we see that $\deg(X'_\bv) \cdot \deg(p|_{\tilde{X}_\bv})=2$. Since $X_{\bv}$ linearly spans $\bP^8$ we cannot have $\deg(X'_\bv)=1$, and so $p$ induces a birational map from $X_\bv$ to an irreducible quadric $X'_\bv$. This is impossible since $X'_\bv$ is rational and $X_\bv$ is not.

Therefore $\sigma$ contains, as a component, a divisor $C\subset X_{\bv}$, and a residual zero-dimensional scheme. Since $\cO_{X_{\bv}}(1)$ is a $(3,3)$-polarization, its class in $\Pic(X_{\bv})$ is divisible by $3$, so the degree of any component of $C$ is divisible by $3$. On the other hand, $\sigma$ is the intersection of four quadrics in $W$ and its dimension is $1$. The complete intersection of three general quadrics among the four is a curve of degree $8$, so either $\deg(C)=3$ or $\deg(C)=6$.

Assume that $C$ has a component $C'$ (necessarily reduced and irreducible) of degree $3$. The self-intersection of $C'$ on $X_{\bv}$ is $2p_a(C')-2$, where $p_a(C')$ is the arithmetic genus of $C'$, and this self-intersection is nonnegative since $C'$ is mobile enough (by translations). So $C'$ is a plane cubic, and it cuts a general theta-divisor in one point  since $(3\theta\mid C')=3$. Since there is one theta-divisor through two points of $C'$ we see that $C'$ is a component of this theta-divisor, the other component being another plane cubic $C''$ cutting $C$ in one point (since $p_a(C\cup C') = 2$). This easily implies that $X_{\bv}$ is the product of these two (necessarily smooth) curves as a polarized variety, which contradicts \cite[Proposition 5.6]{gsw}.

So we conclude that $C$ is an irreducible and generically reduced curve of degree $6$. The next lemma implies that $C$ is a theta-divisor of $X_{\bv}$.

\begin{lemma}
Let $B$ be a $4$-dimensional vector space of quadrics in $W$, defining a codimension $3$ subscheme $\sigma$ whose top-dimensional component is an irreducible and generically reduced curve $C$ of degree $6$. Then $C = \sigma$ is a curve of arithmetic genus $2$. 
\end{lemma}

\begin{proof} 
Let $\tilde{W}$ be the blow-up of $\sigma$ in $W$ and denote the blow-up map $\pi \colon \tilde{W} \to W$. The inclusion $B\subset \rH^{0}(\cI_{\sigma/W}(2))$ gives a dominant morphism $f \colon \tilde{W}\to \bP(B^*)$ whose general fiber is integral by Bertini's theorem. Let $x \in \bP(B^*)$ be a general point, and set $\Gamma = \pi(f^{-1}(x))$. Then $\Gamma$ is an integral curve and hence Cohen--Macaulay. Also, $C\cup\Gamma$ is contained in the complete intersection $V \subset \bP(B^*)$ of $3$ independent quadrics in the hyperplane of $B$ represented by $x$, and since the complement contains no curve it must be empty, i.e., $C$ and $\Gamma$ are linked by these quadrics. 

Then $\cI_{C/V} = \cH om_{\cO_V}(\cO_\Gamma, \cO_V) = \omega_\Gamma(-1)$, where the first equality is \cite[Theorem 21.23]{eisenbud} and the second equality is \cite[Theorem 21.15]{eisenbud} combined with $\omega_V = \cO_V(1)$. So we have the liaison exact sequence 
\[
0 \to \omega_{\Gamma}(-1) \to \cO_{V} \to \cO_{C} \to 0.
\]
The Hilbert polynomials of these sheaves are $2t-3+p_a(\Gamma)$, $8t-4$, and $6t+1-p_a(C)$, respectively, from which we conclude that 
\[
p_a(C) = p_a(\Gamma) + 2.
\]
Since $\Gamma$ is integral, it follows that $p_a(\Gamma) \ge p_a(\tilde{\Gamma}) \ge 0$ ($\tilde{\Gamma}$ is the normalization of $\Gamma$), and hence $p_a(C) \ge 2$.
Furthermore, since $\Gamma$ is Cohen--Macaulay, the same is true for $C$ \cite[Theorem 21.23(b)]{eisenbud}. We have assumed that $C$ is irreducible and generically reduced, so this implies that $C$ is an integral curve. By the Castelnuovo bound, $2$ is the maximum possible genus of a non-degenerate integral curve of degree $6$ in $W \cong \bP^4$, and such curves are scheme-theoretically cut out by $4$ quadrics, so we conclude that $\sigma = C$ and we are done.
\end{proof}

Conversely, if $C$ is a theta-divisor on $X_{\bv}$, it is a smooth curve of degree $6$ and genus $2$ normally embedded in a $4$-dimensional linear subvariety $W \cong \bP^4$ of $\bP^8$. The secant variety of $C$ is a hypersurface of $W$ of degree $8$, since the plane projection of $C$ from a general line of $W$ has eight double points. All of the partial derivatives of the equation of $Y_\bv$ vanish on $X_\bv$, and hence $C$, so this hypersurface is contained in $Y_{\bv}$. By degree considerations, we know that $W$ itself is contained in $Y_{\bv}$, and this finishes the proof of (1).

Now we prove (2). By definition, $[x] \in X_{\bv}$ if and only if $\rank \Phi_\bv(x) = 4$ and $[x] \in Y_\bv$ if and only if $\rank \Phi_\bv(x) \le 6$. We can also write $\Phi_\bv(x)$ as the contraction $d_x \bv$ (recall that $x \in A^*$ and $\bv \in \bigwedge^3 A$, so $d_x \bv \in \bigwedge^2 A$). For any $y\in \cK_{x}$ we have $d_{x\wedge y}\bv=0$, so the kernel of $d_{y}\bv = \Phi_\bv(y)$ contains $x$. If $y \ne x$, this implies that $\dim \ker (d_y \bv) \geq 2$ (and hence $\ge 3$ by skew-symmetry), and so $[y] \in Y_{\bv}$ (and if $y=x$, then we already know that $[x] \in X_\bv \subset Y_\bv$). We conclude that $\bP(\cK_{x}) \subset Y_{\bv}$. Let us suppose now that $\cK_{x}=\cK_{x'}$ for two distinct points $[x] \ne [x']$ of $X_{\bv}$: then for any linear combination $x''$ of $x$ and $x'$ and any $y$ such that $y \in \cK_{x}$, we have $d_{x''\wedge y}\omega=0$. So the line joining the two points $[x]$ and $[x']$ lies on $X_{\bv}$. This is a contradiction since Abelian varieties do not contain rational curves \cite[Proposition 4.9.5]{birkenhake}. Now using (1), we see that the map $X_\bv \to X_\bv^\vee$ given by $x \mapsto \bP(\cK_x) \cap X_\bv$ is injective and hence must also be surjective since both source and target are irreducible surfaces. Since the target is smooth, this implies that the map is an isomorphism.
\end{proof}

Now we specialize to the case $\bv \in \fh$. In this case, we write $\bv = \bc = (c_1,c_2,c_3,c_4)$ using the coordinates \eqref{eqn:w39general}, and the skew-symmetric matrix takes on the following form
\begin{align} \label{eqn:phimatrix}
\Phi_\bc(z) = \begin{pmatrix}
  0& {-c_1 z_{3}}& c_1 z_{2}& {-c_2 z_{7}}& {-c_3 z_{9}}& {-c_4
    z_{8}}& c_2 z_{4}& c_4 z_{6}&  c_3 z_{5}\\
  c_1 z_{3}& 0& {-c_1 z_{1}}& {-c_4 z_{9}}& {-c_2 z_{8}}& {-c_3
    z_{7}}& c_3 z_{6}& c_2 z_{5}&  c_4 z_{4}\\  
{-c_1 z_{2}}& c_1 z_{1}& 0& {-c_3 z_{8}}& {-c_4 z_{7}}& {-c_2
    z_{9}}& c_4 z_{5}& c_3 z_{4}&  c_2 z_{6}\\
  c_2 z_{7}& c_4 z_{9}& c_3 z_{8}& 0& {-c_1 z_{6}}& c_1 z_{5}& {-c_2
    z_{1}}& {-c_3 z_{3}}&  {-c_4 z_{2}}\\
  c_3 z_{9}& c_2 z_{8}& c_4 z_{7}& c_1 z_{6}& 0& {-c_1 z_{4}}& {-c_4
    z_{3}}& {-c_2 z_{2}}&  {-c_3 z_{1}}\\
  c_4 z_{8}& c_3 z_{7}& c_2 z_{9}& {-c_1 z_{5}}& c_1 z_{4}& 0& {-c_3
    z_{2}}& {-c_4 z_{1}}&  {-c_2 z_{3}}\\
  {-c_2 z_{4}}& {-c_3 z_{6}}& {-c_4 z_{5}}& c_2 z_{1}& c_4 z_{3}& c_3
  z_{2}& 0& {-c_1 z_{9}}&  c_1 z_{8}\\
  {-c_4 z_{6}}& {-c_2 z_{5}}& {-c_3 z_{4}}& c_3 z_{3}& c_2 z_{2}& c_4
  z_{1}& c_1 z_{9}& 0&  {-c_1 z_{7}}\\
  {-c_3 z_{5}}& {-c_4 z_{4}}& {-c_2 z_{6}}& c_4 z_{2}& c_3 z_{1}& c_2
  z_{3}& {-c_1 z_{8}}& c_1 z_{7}& 0
\end{pmatrix}
\end{align}

The equation for the Coble cubic $Y_\bc$ is
\small\begin{equation} \label{eqn:coblecubic}
\begin{split}
c_1c_2c_3c_4 \sum_{i=1}^9 z_i^3 
- c_1(c_2^3+c_3^3+c_4^3)(z_1z_2z_3 + z_4z_5z_6 + z_7z_8z_9)
+ c_2(c_1^3+c_3^3-c_4^3)(z_1z_4z_7 + z_2z_5z_8 + z_3z_6z_9)\\
+ c_3(c_1^3-c_2^3+c_4^3)(z_1z_5z_9 + z_2z_6z_7 + z_3z_4z_8)
+ c_4(c_1^3+c_2^3-c_3^3)(z_1z_6z_8 + z_2z_4z_9 + z_3z_5z_7).
\end{split}
\end{equation}
\normalsize We call the ideal generated by the partial derivatives of \eqref{eqn:coblecubic} the Jacobian ideal:
\small\begin{align} \label{eqn:jacobian}
\begin{matrix}
3 c_1 c_2 c_3 c_4 z_{1}^{2}-
c_1(c_2^{3}+ c_3^{3}+ c_4^{3}) z_{2} z_{3}+
c_2(c_1^{3}+ c_3^{3}- c_4^{3}) z_{4} z_{7}+
c_3(c_1^{3}-c_2^{3}+c_4^{3}) z_{5} z_{9}+
c_4(c_1^{3}+c_2^{3}-c_3^{3}) z_{6} z_{8}\\
3 c_1 c_2 c_3 c_4 z_2^{2}-
c_1(c_2^{3}+ c_3^{3}+ c_4^{3}) z_{1} z_{3}+
c_2(c_1^{3}+ c_3^{3}- c_4^{3}) z_{5} z_{8}+
c_3(c_1^{3}-c_2^{3}+c_4^{3}) z_{6} z_{7}+
c_4(c_1^{3}+c_2^{3}-c_3^{3}) z_{4} z_{9}\\
3 c_1 c_2 c_3 c_4 z_{3}^{2}-
c_1(c_2^{3}+ c_3^{3}+ c_4^{3}) z_{1} z_{2}+
c_2(c_1^{3}+ c_3^{3}- c_4^{3}) z_{6} z_{9}+
c_3(c_1^{3}-c_2^{3}+c_4^{3}) z_{4} z_{8}+
c_4(c_1^{3}+c_2^{3}-c_3^{3}) z_{5} z_{7}\\
3 c_1 c_2 c_3 c_4 z_{4}^{2}-
c_1(c_2^{3}+ c_3^{3}+ c_4^{3}) z_{5} z_{6}+
c_2(c_1^{3}+ c_3^{3}- c_4^{3}) z_{1} z_{7}+
c_3(c_1^{3}-c_2^{3}+c_4^{3}) z_{3} z_{8}+
c_4(c_1^{3}+c_2^{3}-c_3^{3}) z_{2} z_{9}\\
3 c_1 c_2 c_3 c_4 z_{5}^{2}-
c_1(c_2^{3}+ c_3^{3}+ c_4^{3}) z_{4} z_{6}+
c_2(c_1^{3}+ c_3^{3}- c_4^{3}) z_{2} z_{8}+
c_3(c_1^{3}-c_2^{3}+c_4^{3}) z_{1} z_{9}+
c_4(c_1^{3}+c_2^{3}-c_3^{3}) z_{3} z_{7}\\
3 c_1 c_2 c_3 c_4 z_{6}^{2}-
c_1(c_2^{3}+ c_3^{3}+ c_4^{3}) z_{4} z_{5}+
c_2(c_1^{3}+ c_3^{3}- c_4^{3}) z_{3} z_{9}+
c_3(c_1^{3}-c_2^{3}+c_4^{3}) z_{2} z_{7}+
c_4(c_1^{3}+c_2^{3}-c_3^{3}) z_{1} z_{8}\\
3 c_1 c_2 c_3 c_4 z_{7}^{2}-
c_1(c_2^{3}+ c_3^{3}+ c_4^{3}) z_{8} z_{9}+
c_2(c_1^{3}+ c_3^{3}- c_4^{3}) z_{1} z_{4}+
c_3(c_1^{3}-c_2^{3}+c_4^{3}) z_{2} z_{6}+
c_4(c_1^{3}+c_2^{3}-c_3^{3}) z_{3} z_{5}\\
3 c_1 c_2 c_3 c_4 z_{8}^{2}-
c_1(c_2^{3}+ c_3^{3}+ c_4^{3}) z_{7} z_{9}+
c_2(c_1^{3}+ c_3^{3}- c_4^{3}) z_{2} z_{5}+
c_3(c_1^{3}-c_2^{3}+c_4^{3}) z_{3} z_{4}+
c_4(c_1^{3}+c_2^{3}-c_3^{3}) z_{1} z_{6}\\
3 c_1 c_2 c_3 c_4 z_{9}^{2}-
c_1(c_2^{3}+ c_3^{3}+ c_4^{3}) z_{7} z_{8}+
c_2(c_1^{3}+ c_3^{3}- c_4^{3}) z_{3} z_{6}+
c_3(c_1^{3}-c_2^{3}+c_4^{3}) z_{1} z_{5}+
c_4(c_1^{3}+c_2^{3}-c_3^{3}) z_{2} z_{4}
\end{matrix}
\end{align}

\normalsize

In particular, we get universal families of Coble cubic hypersurfaces and Abelian surfaces over $\bP^3 \setminus \{\Delta = 0\}$, where the coordinates on the base are $[c_1:c_2:c_3:c_4]$.
If we change to coordinates fixed by the involution $\iota$ \eqref{eqn:iota} and restrict to the Maschke space $\bP_\rM$, then $\Phi_\bc(z)$ from \eqref{eqn:phimatrix} takes on the block matrix form
\[
\footnotesize\begin{pmatrix}0& {-c_1 z_{3}}& {-c_2 z_{7}}&
  {-c_3 z_{9}}& {-c_4 z_{8}}& 0& 0& 0&  0\\
  c_1 z_{3}& 0& -c_3 z_{8}-c_4 z_{9}& -c_4 z_{7}-c_2 z_{8}&
  -c_3 z_{7}-c_2  z_{9}& 0& 0& 0&  0\\
  c_2 z_{7}& c_3 z_{8}+c_4 z_{9}& 0& c_4 z_{3}+c_1 z_{8}&
  -c_3 z_{3}-c_1 z_{9}& 0&  0& 0&  0\\
  c_3 z_{9}& c_4 z_{7}+c_2 z_{8}& -c_4 z_{3}-c_1 z_{8}& 0&
  c_2 z_{3}+c_1 z_{7}&  0& 0& 0&  0\\
  c_4 z_{8}& c_3 z_{7}+c_2 z_{9}& c_3 z_{3}+c_1 z_{9}& -c_2
  z_{3}-c_1 z_{7}& 0&  0& 0& 0&  0\\
  0& 0& 0& 0& 0& 0& c_3 z_{8}-c_4 z_{9}& c_4
  z_{7}-c_2 z_{8}&  c_2 z_{9}-c_3 z_{7}\\
  0& 0& 0& 0& 0& c_4 z_{9}-c_3 z_{8}& 0& 
c_1 z_{8}-c_4z_{3}&  c_3 z_{3}-c_1 z_{9}\\
  0& 0& 0& 0& 0& c_2 z_{8}-c_4 z_{7}& c_4
  z_{3}-c_1 z_{8}& 0&  c_1 z_{7}-c_2 z_{3}\\
  0& 0& 0& 0& 0& c_3 z_{7}-c_2 z_{9}& c_1 z_{9}-c_3 z_{3}&
  c_2 z_{3}-c_1 z_{7}& 0
       \end{pmatrix}.
\]
The bottom right $4 \times 4$ matrix has rank $2$, and the linear entries define the point 
\begin{align} \label{eqn:universalsection}
z_\bc = [0: -c_1 : c_1: -c_2: -c_3: -c_4: c_2: c_4: c_3]
\end{align}
on the Abelian surface $X_\bc$. Using Theorem~\ref{thm:fanoP4}, this distinguished point gives us a distinguished $\bP^4$ in the Coble cubic $Y_\bc$ (and hence a family in the universal Coble cubic) by taking the kernel of the matrix
\begin{align} \label{eqn:matrixc}
\Phi_\bc(z_\bc) = \begin{pmatrix}
0 & -c_1^2 & -c_1^2 & -c_2^2 & -c_3^2 & -c_4^2 & -c_2^2 & -c_4^2 & -c_3^2\\
c_1^2 & 0 & 0 & -c_3c_4 & -c_2c_4 & -c_2c_3 & -c_3c_4 & -c_2c_3 & -c_2c_4\\
c_1^2 & 0 & 0 & -c_3c_4 & -c_2c_4 & -c_2c_3 & -c_3c_4 & -c_2c_3 & -c_2c_4\\
c_2^2 & c_3c_4 & c_3c_4 & 0 & c_1c_4 & -c_1c_3 & 0 & -c_1c_3 & c_1c_4\\
c_3^2 & c_2c_4 & c_2c_4 & -c_1c_4 & 0 & c_1c_2 & -c_1c_4 & c_1c_2 & 0\\
c_4^2 & c_2c_3 & c_2c_3 & c_1c_3 & -c_1c_2 & 0 & c_1c_3 & 0 & -c_1c_2\\
c_2^2 & c_3c_4 & c_3c_4 & 0 & c_1c_4 & -c_1c_3 & 0 & -c_1c_3 & c_1c_4\\
c_4^2 & c_2c_3 & c_2c_3 & c_1c_3 & -c_1c_2 & 0 & c_1c_3 & 0 & -c_1c_2\\
c_3^2 & c_2c_4 & c_2c_4 & -c_1c_4 & 0 & c_1c_2 & -c_1c_4 & c_1c_2 & 0
\end{pmatrix}.
\end{align}
We note that $\bP_{\rM} \subset \ker \Phi_\bc(z_\bc)$. This matrix generically has rank $4$, and the ideal of $4 \times 4$ Pfaffians is the ideal generated by the coefficients of the Coble cubic. When it has rank $4$, the kernel is cut out by the following $4$ linear equations (we are assuming that all $c_i$ are nonzero):
\small 
\begin{equation} \label{eqn:universalP4}
\begin{split}
(c_2^3+c_3^3+c_4^3)z_1+3c_2c_3c_4(z_2+z_3) = 0, \qquad
(-c_1^3-c_3^3+c_4^3)z_1+3c_1c_3c_4(z_4+z_7) = 0,\\
(-c_1^3-c_2^3+c_3^3)z_1+3c_1c_2c_3(z_6+z_8) = 0, \qquad
(-c_1^3+c_2^3-c_4^3)z_1+3c_1c_2c_4(z_5+z_9) = 0.
\end{split}
\end{equation}
\normalsize
Again by Theorem~\ref{thm:fanoP4}, the intersection of this $\bP^4$ with the Abelian surface $X_\bc$ gives the tri-canonical embedding of a genus $2$ curve $C_\bc$. Now we record what we have learned so far.

\begin{theorem} \label{thm:summary}
Set $S = \bP^3 \setminus \{\Delta=0\}$.
\begin{compactenum}[\rm (a)]
\item The $8 \times 8$ Pfaffians of the matrix $\Phi_\bc(z)$ in \eqref{eqn:phimatrix} define a flat family of cubic hypersurfaces $\cY \subset \bP^8_S$ over $S$.
\item The singular locus of this family is a flat family of Abelian surfaces $\cX \subset \bP^8_S$ over $S$ equipped with indecomposable $(3,3)$-polarizations. The identity point is given by the universal section \eqref{eqn:universalsection}. This family can also be defined by the $6 \times 6$ Pfaffians of the matrix $\Phi_\bc(z)$.
\item The equations \eqref{eqn:universalP4} define a $\bP^4$-bundle $\cP \subset \bP^8_S$ over $S$ which is a subvariety of $\cY$.
\item The intersection $\cC = \cP \cap \cX$ is a flat family of smooth genus $2$ curves over $S$, and $(\cX, \cO_{\cX}(1)) = (\Jac(\cC), 3\Theta)$.
\end{compactenum}
\end{theorem}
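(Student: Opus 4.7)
The plan is to assemble parts (a)--(d) from the results already established in Section~\ref{sec:coblecubics} and Theorem~\ref{thm:fanoP4}, upgrading the fiberwise statements to genuine statements about families over $S$; in each case, flatness will follow from constancy of the Hilbert polynomial on fibers.

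For (a) I would note that the cubic \eqref{eqn:coblecubic} has coefficients polynomial in $\bc$, and for $\bc \in S$ it remains a nonzero cubic (its singular locus, which is the Abelian surface $X_\bc$, is a proper subvariety, so $Y_\bc$ is an honest hypersurface). Thus $\cY \to S$ has fibers of constant Hilbert polynomial and is flat. For (b), the identification of the singular locus of $Y_\bc$ with a $(3,3)$-polarized Abelian surface $X_\bc$, equivalently cut out by the $6 \times 6$ Pfaffians of $\Phi_\bc(z)$, is the content of \cite[\S 5]{gsw} as recalled in Section~\ref{sec:coblecubics}; flatness of $\cX \to S$ again reduces to constancy of the Hilbert polynomial (a degree-$18$ surface in $\bP^8$). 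For the universal section I would substitute $z_\bc$ into $\Phi_\bc$ to obtain the matrix \eqref{eqn:matrixc}, read off that it has rank $4$ for every $\bc \in S$, and conclude $z_\bc \in X_\bc$; the identification of $z_\bc$ with the identity element will fall out of part (d).

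For (c), I would verify that the four linear forms in \eqref{eqn:universalP4} are linearly independent in each fiber over $S$: since no $c_i$ vanishes on $S$, the coefficients $3c_2c_3c_4$, $3c_1c_3c_4$, $3c_1c_2c_3$, $3c_1c_2c_4$ of the disjoint combinations $z_2{+}z_3$, $z_4{+}z_7$, $z_6{+}z_8$, $z_5{+}z_9$ respectively are all nonzero, so the four forms are independent. This realizes $\cP$ as a $\bP^4$-bundle over $S$. The containment $\cP \subset \cY$ is exactly the calculation surrounding \eqref{eqn:matrixc}: $\cP_\bc = \bP(\ker \Phi_\bc(z_\bc))$, and since $z_\bc \in X_\bc$, the argument of Theorem~\ref{thm:fanoP4}(2) applied with $x = z_\bc$ gives $\bP(\cK_{z_\bc}) \subset Y_\bc$.

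For (d), the fiberwise statement is Theorem~\ref{thm:fanoP4}(1): $\cP_\bc \cap \cX_\bc$ is a theta-divisor of $X_\bc$. For $\bc \in S$, $X_\bc$ is smooth by Theorem~\ref{thm:M23minus} and its $(3,3)$-polarization is indecomposable (as used inside the proof of Theorem~\ref{thm:fanoP4}), so the underlying principal polarization has a smooth irreducible theta-divisor, which is then a smooth genus-$2$ curve $\cC_\bc$ with $\Jac(\cC_\bc) \cong X_\bc$. Constancy of the Hilbert polynomial of $\cC_\bc$ (degree $6$ and genus $2$ in $\bP^4$) gives flatness of $\cC \to S$. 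The relation $\cO_{\cX}(1) = 3\Theta$ holds on each fiber because $\cO_{X_\bc}(1)$ is a $(3,3)$-polarization whose underlying principal class is $\Theta_\bc$, and the isomorphism $\cX_\bc \cong \Jac(\cC_\bc)$ is realized by $x \mapsto \bP(\cK_x) \cap X_\bv$ from Theorem~\ref{thm:fanoP4}, under which $z_\bc$ is sent to the origin. The main obstacle I anticipate is controlling smoothness of $\cC_\bc$ uniformly over all of $S$, but once indecomposability of the polarization is in hand the four parts of the theorem reduce to organizational bookkeeping.
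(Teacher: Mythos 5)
Your proposal is correct and takes essentially the same route as the paper: Theorem~\ref{thm:summary} is presented there with no separate proof, as a record of ``what we have learned so far'' in \S\ref{sec:coblecubics} (the citation of \cite[\S 5]{gsw} for the Pfaffian loci, the rank computation for \eqref{eqn:matrixc} giving the section, the kernel equations \eqref{eqn:universalP4}, and Theorem~\ref{thm:fanoP4} for the $\bP^4$ and the theta-divisor), which is exactly what you assemble, with flatness supplied by the standard constancy-of-Hilbert-polynomial criterion over the reduced base $S$. One small repair: smoothness of $X_\bc$ for $\bc \in S$ should be cited from Proposition~\ref{prop:w39disc} rather than Theorem~\ref{thm:M23minus}, since the proof of the latter itself relies on the construction being summarized here, so your citation as written is circular.
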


\begin{remark}
The intersection $\bP_\rM \cap X_\bc$ consists of 6 points, and the intersection $\bP_\rB \cap X_\bc$ consists of $10$ points \cite[\S 4.3]{dl}. The union of these points is the $2$-torsion subgroup $X_\bc[2]$. Also, $\bP_\rM \cong \bP^3$ and the $6$ points are in linearly general position, so there is a unique rational normal cubic through them. The double cover of this $\bP^1$ ramified at these $6$ points is a genus $2$ curve, which is $C = X_\bc \cap \bP^4$, where $\bP^4$ is defined by \eqref{eqn:universalP4}. The point \eqref{eqn:universalsection} is a marked Weierstrass point on $C$. 

Here's how to see this explicitly: recall the involution $\iota$ on $\bP^8$ defined in \S\ref{ss:coblecubics} which extends the usual involution $x \mapsto -x$ on the Abelian surface $X_\bc$ with identity point $z_\bc$ \eqref{eqn:universalsection}. The $\bP^4$ defined in \eqref{eqn:universalP4} is invariant under $\iota$ and hence the same is true for $C$. The map $x \mapsto x - \iota(x)$ (here we are interpreting $x$ as a vector in $A^* \cong K^9$ and using its linear structure; this is not to be confused with negation on $X_\bc$) gives a map $C \to \bP_{\rM}$ which contains the $6$ points $X_\bc \cap \bP_{\rM}$ (they are fixed points for this map and $\bP_{\rM} \subset \bP^4$) and hence its image is the rational normal curve as claimed.

Here is a way to find the equations for this twisted cubic. The ideal of the $6$ points in $\bP_\rM$ is defined by $4$ quadrics. Its Betti table is
\small \begin{Verbatim}[samepage=true]
       0 1 2 3
total: 1 4 5 2
    0: 1 . . .
    1: . 4 2 .
    2: . . 3 2
\end{Verbatim}
\normalsize
This looks like an Eagon--Northcott complex tensored with a Koszul complex of length $1$ on a quadric. We can make this observation useful as follows: in the third differential, we see a $2 \times 3$ matrix of linear forms, and the ideal of $2 \times 2$ minors of this matrix vanishes on the $6$ points and defines the curve.
\end{remark}

\begin{remark} \label{rmk:33sat}
The Jacobian ideal \eqref{eqn:jacobian} is not radical. Indeed, for generic choices of \eqref{eqn:w39general}, this ideal scheme-theoretically defines an Abelian surface, but there are 3 more cubics in its saturation. (This had previously been shown by Barth \cite{barth}.) These cubics can be obtained by taking the $6 \times 6$ Pfaffians of \eqref{eqn:phimatrix}. 

Now consider $c_1, \dots, c_4$ as projective coordinates on $\bP^3$ and consider the subvariety $\overline{\cX}$ of $\bP^3 \times \bP^8$ defined by the Jacobian ideal, which is generated by the $9$ bidegree $(4,2)$-equations \eqref{eqn:jacobian} and the $84$ bidegree $(3,3)$ equations coming from the $6 \times 6$ Pfaffians of \eqref{eqn:phimatrix}. We might conjecture that these $93$ equations define the radical ideal of $\cX$ (they are all linearly independent). This subvariety $\overline{\cX}$ is like the ``universal $(3,3)$ Abelian surface''. Compare this to \cite[Conjecture 8.1]{univkummer}, which deals with the analogous situation of Coble quartics, rather than Coble cubics. Similarly, using the ``universal $\bP^4$'' $\overline{\cP}$ defined as the projectivization of the kernel of \eqref{eqn:matrixc}, we can ask about the prime ideal of the ``universal genus 2 curve'' $\overline{\cC} = \overline{\cX} \cap \overline{\cP}$ in its tri-canonical embedding. Technically, $\overline{\cP}$ is not a $\bP^4$-bundle, since the matrix \eqref{eqn:matrixc} drops to rank $2$ over the points $\bc$ where the coefficients of the Coble cubic vanish.
\end{remark}

\begin{remark}
If we set $z_1=z_2=z_3$, $z_4=z_5=z_6$, $z_7=z_8=z_9$, then the equation for the Coble cubic becomes the Hesse normal form for a plane cubic. Similarly, we can get three more plane cubics by considering different affine lines in \eqref{eqn:affineplane}. 
\end{remark}

\begin{remark}
The Jacobian variety of a curve $C$ naturally has a modular interpretation as the moduli space of degree $0$ line bundles on $C$. The Coble cubic is also related to a moduli space: it is shown in \cite{nguyen} and \cite{ortega} that the moduli space of semistable principal $\SL_2$-bundles on a genus $2$ curve $C$ can be exhibited as a double cover of $\bP^8$, and the branch locus is the projective dual of the Coble cubic of $C$.
\end{remark}

\subsection{The reflection group $\rG_{32}$}

Recall that a {\bf flat} of a hyperplane arrangement is just an intersection of some of the hyperplanes. For the reflection arrangement of a complex reflection group $\Gamma$, there is a natural action of $\Gamma$ on the flats. For $\Gamma = \rG_{32}$, we list the orbits of the flats in Table~\ref{tab:G32flats}. The column ``codimension'' gives the codimension of the flat, ``size'' refers to the number of flats in the orbit, ``parabolic subgroup'' is the isomorphism type of the pointwise stabilizer of the flat, and the last column on ``equations'' gives the equations of one flat in the orbit. We note that $\rG_4 \cong \SL_2(\bF_3)$ and that $\rG_{25}$, modulo its $3$-element center, is the Hessian group, which is the automorphism group of the Hessian pencil of plane cubics, see \cite[\S 4]{artebanidolgachev} for more details. Again, we are using the notation of \cite{shephard} to index complex reflection groups. The numbering of the flats follows \cite{ev}.

\begin{table}
\begin{tabular}{l|l|l|l|l}
Family \# & Codimension & Size & Parabolic subgroup & Equations of a representative flat \\
\hline
1 & 0 & & & Generic\\
\hline
2 & 1 & 40 & $\bZ/3$ & $c_4$\\
\hline
3 & 2 & 240 & $\bZ/3 \times \bZ/3$ & $c_3,c_4$ \\
4 & 2 & 90 & $\rG_4$ & $c_2+c_3,c_4$\\
\hline
5 & 3 & 360 & $\bZ/3 \times \rG_4$ & $c_1, c_2+c_3, c_4$\\
6 & 3 & 40 & $\rG_{25}$ & $c_2,c_3,c_4$\\
\end{tabular}
\caption{The flats of the $\rG_{32}$ reflection arrangement.}
\label{tab:G32flats}
\end{table}

The discriminant $\Delta(\bc)$ of $\rG_{32}$ is the defining equation for the union of the $40$ hyperplanes in its reflection arrangement, and is the product of the following $4$ polynomials 
\begin{equation} \label{eqn:discw39}
\begin{split}
c_1((c_2^3+c_3^3+c_4^3)^3 - (3c_2c_3c_4)^3), \qquad 
c_2((c_1^3+c_3^3-c_4^3)^3 + (3c_1c_3c_4)^3),\\
c_3((c_1^3-c_2^3+c_4^3)^3 + (3c_1c_2c_4)^3), \qquad 
c_4((c_1^3+c_2^3-c_3^3)^3 + (3c_1c_2c_3)^3).
\end{split}
\end{equation}
This differs slightly from what is in \cite{ev}. 

\begin{proposition} \label{prop:w39disc}
The discriminant $\Delta(\bc)$ is nonzero if and only if the $6 \times 6$ Pfaffians of \eqref{eqn:phimatrix} define a smooth surface. 
\end{proposition}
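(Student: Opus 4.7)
The plan is to prove the two implications separately, exploiting the $\rG_{32}$-equivariance of the Pfaffian construction throughout.

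For the direction $\Delta(\bc) = 0 \Rightarrow$ non-smoothness of the Pfaffian locus, the orbit-size data in Family \#2 of Table~\ref{tab:G32flats} shows that $\rG_{32}$ acts transitively on its $40$ reflection hyperplanes. By $\rG_{32}$-equivariance, it therefore suffices to exhibit a single $\bc$ lying on one specific hyperplane whose Pfaffian locus fails to be a smooth surface. I would take $c_4=0$ together with a generic choice of $(c_1,c_2,c_3)$, substitute into the matrix \eqref{eqn:phimatrix}, and either verify the failure directly with Macaulay 2 or appeal to the explicit degeneration analysis carried out in \S\ref{sec:w39degenerations}, where such specializations are shown to produce surfaces that acquire singular points or break into lower-dimensional components (note that when $c_4=0$ the vector lies in a proper Cartan subspace of a smaller Vinberg sub-representation, which forces an enhanced stabilizer and hence extra structure on the Pfaffian locus).

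For the reverse direction $\Delta(\bc)\ne 0 \Rightarrow$ smoothness, I would invoke \cite[\S 5]{gsw}, which shows that on a $G$-invariant Zariski-open subset $U\subseteq\bigwedge^{3}A$ the $6\times 6$ Pfaffians of $\Phi_{\bv}(z)$ cut out a smooth $(3,3)$-polarized Abelian surface. The bad locus $Z := \bigwedge^{3}A\setminus U$ is then a proper closed $G$-invariant subvariety of $\bigwedge^{3}A$, and its intersection $V := Z\cap\fh$ is a proper closed $\rG_{32}$-invariant subvariety of $\fh$. The crux is to identify $V$ with $\{\Delta=0\}$: by Vinberg's theorem matching the invariant theory of $G$ on $\bigwedge^{3}A$ with that of $\rG_{32}$ on $\fh$, the $G$-discriminant of $\bigwedge^{3}A$ restricts along $\fh\hookrightarrow\bigwedge^{3}A$ to a power of the Weyl-arrangement discriminant $\Delta$; a degree count against the list \eqref{eqn:discw39} (total degree $40$, matching the $40$ hyperplanes) pins down the identification.

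The main obstacle is ensuring that $V$ is actually the full hypersurface $\{\Delta=0\}$ rather than some proper $\rG_{32}$-invariant subvariety of it. My safest route would be to complement the invariant-theoretic argument with a single Macaulay 2 verification that $X_{\bc_0}$ is smooth at some explicit $\bc_0\in\fh$ with $\Delta(\bc_0)\ne 0$; combined with openness of the smooth locus, closedness and $\rG_{32}$-invariance of $V$, and the first direction above, this forces $V=\{\Delta=0\}$ on the nose.
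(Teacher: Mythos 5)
Your forward direction ($\Delta(\bc)=0$ implies the Pfaffian locus is not a smooth surface) contains a logical slip but is repairable. Transitivity of $\rG_{32}$ on the $40$ reflection hyperplanes does \emph{not} reduce the claim to a single $\bc$ on a single hyperplane: equivariance makes smoothness of the Pfaffian locus constant on $\rG_{32}$-orbits of \emph{points}, and each hyperplane contains infinitely many such orbits, as well as the deeper flats of Families 3--6, which are not in the orbit of any point of Family 2. What does work is your fallback: the analysis of \S\ref{sec:w39degenerations} is carried out flat-by-flat, uniformly in the parameters subject only to the condition of not lying in a deeper flat, and it shows that for \emph{every} $\bc$ with $\Delta(\bc)=0$ the Pfaffian locus is a reducible degree-$18$ surface (Families 2, 3), a threefold (Families 4, 5), or a union of three $\bP^5$'s (Family 6) --- never a smooth surface. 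Routed entirely through that analysis this direction is sound, and it is genuinely different from the paper, whose whole proof is a citation: \cite[Proposition 4.4]{dl} in characteristic $0$ and \cite[\S 3]{geer} in characteristic $\ne 3$.

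The reverse direction, however, has a genuine gap. From \cite[\S 5]{gsw} you only get a nonempty $G$-invariant open set $U$ on which the $6\times 6$ Pfaffians cut out a smooth Abelian surface; nothing identifies the complement $Z$ as the zero locus of a $G$-invariant polynomial, nor even as a hypersurface, so there is no ``$G$-discriminant'' of known degree to count against \eqref{eqn:discw39}. Vinberg's Chevalley-type restriction theorem does identify the restriction to $\fh$ of the invariant cutting out the non-regular-semisimple locus with (a power of) $\Delta$, but to use this you would first need to know that $U$ contains \emph{all} regular semisimple elements --- which is exactly the proposition being proved, not something \cite{gsw}'s ``general $\bv$'' statement supplies. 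Your proposed rescue also fails on its own terms: since $\rG_{32}$ is a finite group, closed invariant subsets of $\fh$ are abundant. For instance, $\{\Delta=0\} \cup \rG_{32}\cdot\bc_1$, for any regular $\bc_1$ outside the orbit of your test point $\bc_0$, is closed, $\rG_{32}$-invariant, contains $\{\Delta=0\}$, and misses $\bc_0$; correspondingly, on the ambient side $\ol{G\cdot\{\Delta=0\}} \cup G\cdot\bc_1$ is closed and $G$-invariant, because semisimple orbits in a $\theta$-representation are closed. So the properties you extract for $Z$ (closed, invariant, $Z\cap\fh\supseteq\{\Delta=0\}$, $\bc_0\notin Z$) simply do not force $Z\cap\fh=\{\Delta=0\}$. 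A correct self-contained proof needs an argument valid at every $\bc$ with $\Delta(\bc)\ne 0$ simultaneously --- this is what the moduli-theoretic arguments of \cite{geer} and \cite{dl} accomplish --- and neither a single Macaulay 2 computation nor finite-group invariance can substitute for it.
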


\begin{proof}
Over a field of characteristic $0$, this is \cite[Proposition 4.4]{dl}, and in general for a field of characteristic different from $3$, this is \cite[\S 3]{geer}.
\end{proof}

In \S\ref{sec:w39degenerations} we will study what happens when $\Delta(\bc) = 0$.

\subsection{The Burkhardt quartic} \label{ss:burkhardt}

Using \eqref{eqn:jacobian}, we define the coefficients 
\begin{align*}
\begin{array}{lll}
\gamma_1 = 3 c_1 c_2 c_3 c_4, &
\gamma_2 = -c_1(c_2^{3}+ c_3^{3}+ c_4^{3}),&
\gamma_3 = c_2(c_1^{3}+ c_3^{3}- c_4^{3}),\\ &
\gamma_4 = c_3(c_1^{3}-c_2^{3}+c_4^{3}),&
\gamma_5 = c_4(c_1^{3}+c_2^{3}-c_3^{3}).
\end{array}
\end{align*}
These are points of the {\bf Burkhardt quartic} $\cB$
\begin{align} \label{eqn:burkhardt}
\gamma_1 (\gamma_1^3 + \gamma_2^3 + \gamma_3^3 + \gamma_4^3 + \gamma_5^3) + 3\gamma_2 \gamma_3 \gamma_4 \gamma_5 = 0.
\end{align}
See \cite[Chapter 5]{hunt} for more information on this hypersurface. 

The singular locus of the Burkhardt quartic consists of $45$ points. For each one, the variety defined by the equations \eqref{eqn:jacobian} is a Segre embedding of $\bP^2 \times \bP^2$. These correspond to polarized Abelian surfaces which are products of plane cubics. More precisely, this is true for points in the exceptional divisor of the blowup of $\cB$ at these $45$ points (cf. \cite[Theorem 3.1]{geer}).

\begin{remark}
Following \cite[\S 5.1]{hunt}, we can write \eqref{eqn:burkhardt} as the fourth elementary symmetric function $\frac{1}{3}e_4(x_0, \dots, x_5)$ where
\begin{align*}
\begin{array}{ll}
x_0 = \gamma_1-\gamma_4-\omega\gamma_5, & 
x_1 = \gamma_1-\omega\gamma_4-\gamma_5,\\
x_2 = -\gamma_1+\omega\gamma_2+\omega\gamma_3, &
x_3 = -\gamma_1+\omega^2\gamma_2+\gamma_3,\\
x_4 = -\gamma_1+\gamma_2+\omega^2\gamma_3,&
x_5 = \gamma_1-\omega^2\gamma_4-\omega^2\gamma_5.
\end{array}
\end{align*}
Our choice of variables differs from the choice in \cite{hunt} by scalar multiples, so these do not match up with \cite[\S 5.1]{hunt} exactly. 
\end{remark}

The $5$ functions $\gamma_1, \dots, \gamma_5$ form an irreducible representation of $\rG_{32}$, which is a Macdonald representation \cite{macdonald} for root subsystems of type $\rA_1^{\times 4}$. They define a rational map 
\begin{align} \label{eqn:cmap}
\gamma \colon \bP^3 \dashrightarrow \cB
\end{align}
of degree $6$ \cite[\S 5.3.1]{hunt}. Its base locus consists of $40$ points. As with any Macdonald representation, this map can be factored as the product of a linear map and a monomial map. This perspective leads one to the theory of {\it tropical compactifications}, which is pursued in \cite{RSS}.

\begin{remark} \label{rmk:gorenstein}
The base locus of the rational map $\bP^3 \dashrightarrow \bP^4$ given by the coefficients of the Coble cubic is Family 6 of Table~\ref{tab:G32flats}. This ideal is a codimension $3$ Gorenstein ideal, and can be expressed as the $4 \times 4$ Pfaffians of the matrix:
\[
\psi(\bc) = \begin{pmatrix}
0 & c_1^2 & c_2^2 & c_3^2 & c_4^2\\
-c_1^2 & 0 & c_3c_4 & c_2c_4 & c_2c_3\\
-c_2^2 & -c_3c_4 & 0 & -c_1c_4 & c_1c_3\\
-c_3^2 & -c_2c_4 & c_1c_4 & 0 & -c_1c_2\\
-c_4^2 & -c_2c_3 & -c_1c_3 & c_1c_2 & 0
\end{pmatrix} 
\]

Now consider where $\bP^3 \dashrightarrow \bP^4$ fails to be an immersion, i.e., when the Jacobian matrix
\[
\begin{pmatrix}
  3 c_2 c_3 c_4&      -c_2^3-c_3^3-c_4^3&      3 c_1^2 c_2&      3 c_1^2 c_3&      3 c_1^2 c_4\\
  3 c_1 c_3 c_4&      -3 c_1 c_2^2&      c_1^3+c_3^3-c_4^3&      -3 c_2^2 c_3&      3 c_2^2 c_4\\
  3 c_1 c_2 c_4&      -3 c_1 c_3^2&      3 c_2 c_3^2&      c_1^3-c_2^3+c_4^3&      -3 c_3^2 c_4\\
  3 c_1 c_2 c_3& -3 c_1 c_4^2& -3 c_2 c_4^2& 3 c_3 c_4^2&
  c_1^3+c_2^3-c_3^3
      \end{pmatrix}
\]
fails to be injective. The maximal minors of this matrix cut out a codimension $2$ Cohen--Macaulay reduced scheme which coincides with Family 4 of Table~\ref{tab:G32flats}.
\end{remark}

\begin{proposition} \label{prop:cmapsurj}
The image of the map $\bP^3 \setminus \mathrm{Family\ 6} \to \cB$ given by \eqref{eqn:cmap} is the complement of the $\rG_{32}$-orbit of the point $[0:0:0:0:1] \in \cB$, and this orbit consists of $160$ points.
\end{proposition}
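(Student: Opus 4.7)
The plan is to split the statement into exclusion (the $\rG_{32}$-orbit of $[0:0:0:0:1]$ is not in the image) and inclusion (every other point of $\cB$ is), handling exclusion by direct computation plus equivariance, and inclusion by a fibre-theoretic analysis on a resolution of $\gamma$.

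For exclusion, I would first check that $[0:0:0:0:1]$ lies on $\cB$ (both summands of \eqref{eqn:burkhardt} vanish). Supposing $\gamma(\bc)=[0:0:0:0:1]$, the condition $\gamma_5 \neq 0$ forces $c_4 \neq 0$, while $\gamma_1 = 3c_1c_2c_3c_4 = 0$ forces one of $c_1,c_2,c_3$ to vanish; a case analysis on which of these vanishes, combined with $\gamma_2 = \gamma_3 = \gamma_4 = 0$, shows that the other two of $c_1,c_2,c_3$ must vanish as well, placing $\bc$ in Family 6. Since the $\gamma_i$ span a $\rG_{32}$-subrepresentation of the coordinate ring on $\fh$, the map $\gamma$ is $\rG_{32}$-equivariant and the entire orbit of $[0:0:0:0:1]$ is excluded. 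For the orbit size, I would compute the stabilizer of $[0:0:0:0:1]$ in $\bP\Sp_4(\bF_3)$ (the image of $\rG_{32}$ acting on $[\gamma_1:\cdots:\gamma_5]$); using the generators $\mu,\nu$ from \S\ref{ss:coblecubics}, a direct \texttt{GAP} computation yields a stabilizer of order $162$ and hence an orbit of size $25920/162 = 160$.

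For the inclusion, I would resolve the indeterminacy of $\gamma$ by blowing up $\bP^3$ along Family 6 to obtain a morphism $\tilde\gamma \colon \widetilde{\bP^3} \to \cB$, which is surjective since $\widetilde{\bP^3}$ is complete, $\tilde\gamma$ is dominant, and $\cB$ is irreducible. Expanding $\gamma$ near $[1:0:0:0]$ with $(c_2,c_3,c_4) = t(a,b,c)$ and letting $t \to 0$ gives the leading-order behaviour $[0:0:a:b:c]$, so the exceptional $\bP^2$ maps isomorphically onto $P_0 := \{\gamma_1 = \gamma_2 = 0\} \subset \cB$ via $[a:b:c] \mapsto [0:0:a:b:c]$; by equivariance, the 40 exceptional divisors map isomorphically onto a single $\rG_{32}$-orbit of 40 planes $P_f \subset \cB$. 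For any $p \in \cB \setminus \bigcup_f P_f$, no preimage of $p$ under $\tilde\gamma$ can lie on any $E_f$, so all preimages lie in $\bP^3 \setminus \text{Family 6}$ and $p$ is in the image. This reduces the problem to analyzing which points of $\bigcup_f P_f$ lie in the image.

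The main obstacle is identifying the points of $\bigcup_f P_f$ that fail to be in the image. Since $\gamma$ has degree $6$, for a generic $p \in P_f$ there is one preimage on $E_f$ and five more in $\bP^3 \setminus \text{Family 6}$, so such a $p$ lies in the image; the only way for $p \in P_f$ to fail is for all $6$ preimages of $p$ in $\widetilde{\bP^3}$ to lie on the exceptional locus, which requires $N(p) := |\{f' : p \in P_{f'}\}| \ge 6$. By equivariance the set $Z := \{p \in \cB : N(p) \ge 6\}$ is $\rG_{32}$-invariant, and each intersection $Z \cap P_f$ is cut out in $P_f \cong \bP^2$ by at least five incidence conditions (with the other $P_{f'}$ through $p$), hence is finite; so $Z$ is a finite union of $\rG_{32}$-orbits. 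Directly verifying $N([0:0:0:0:1]) = 6$ (by enumerating the planes through it using the $\rG_{32}$-action on the 40 planes) shows $Z$ contains the 160-point orbit. Enumerating the $\rG_{32}$-orbits on $\bigcup_f P_f$ (most conveniently in \texttt{GAP}) and checking $N$ on a representative of each orbit then confirms no other orbit lies in $Z$, so $Z$ is exactly the 160-point orbit.
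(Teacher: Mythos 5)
Your exclusion half is fine: the case analysis showing that only Family-6 points can map to $[0:0:0:0:1]$, the equivariance of $\gamma$, and the stabilizer computation giving $25920/162=160$ are all legitimate, and your blow-up $\widetilde{\bP^3}\to\cB$ is a geometric version of the paper's resolution of \eqref{eqn:cmap} via the kernel of $\psi(\bc)$, correctly reducing the problem to points lying on the $40$ planes $P_f$. The genuine gap is the criterion you use on those planes: the claim that $p\in P_f$ can fail to be in the image only if $N(p)\ge 6$. This presupposes that the fiber $\tilde\gamma^{-1}(p)$ consists of six \emph{distinct} points, i.e.\ that $\tilde\gamma$ is unramified over $P_f$. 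It is not: $\tilde\gamma$ is ramified along every exceptional divisor $E_f$, and your own expansion shows it. With $(c_1,c_2,c_3,c_4)=(1,s,su,sv)$ one gets
\[
[\gamma_1:\cdots:\gamma_5]=[\,3s^2uv\,:\,-s^2(1+u^3+v^3)\,:\,1+O(s^3)\,:\,u+O(s^3)\,:\,v+O(s^3)\,],
\]
so the two coordinates transverse to $P_f$ vanish to order $2$ in the blow-up parameter (this is the $t^2$ hiding in your own leading-order computation), and $\tilde\gamma$ has local degree $2$ along $E_f$. Consequently the generic fiber over $P_f$ is one point of $E_f$ counted with multiplicity $2$ plus only \emph{four} points off the exceptional locus --- your ``one plus five'' bookkeeping is already false --- and over special points of $P_f$ the length-$6$ fiber can a priori be supported on very few points with multiplicities. ``All preimages exceptional'' therefore only forces the multiplicities at the $N(p)$ exceptional preimages to sum to at least $6$, not $N(p)\ge 6$: a point lying on three planes, with local degree $2$ at each exceptional preimage, would evade your criterion entirely, and nothing in your argument controls these multiplicities. (The fiber structure over $P_f$ is genuinely more complicated than your picture: e.g.\ the nine lines $\{c_j=0,\ c_2^3+c_3^3+c_4^3=0\}$ are contracted by $\gamma$ to points of $P_{[1:0:0:0]}$, giving positive-dimensional fibers there.)

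This is exactly the difficulty the paper's proof is built to avoid: instead of counting preimages, it descends explicitly. Given $p$ in the plane $\{\gamma_1=\gamma_2=0\}$ attached to $\bc=[1:0:0:0]$, it restricts $\gamma$ to the plane $\{c_1=0\}$ --- the degree-$4$ map with $12$ base points that actually accounts for the four non-exceptional preimages --- resolves that map by the same kernel trick, and then descends once more to the line $\{c_1=c_2=0\}$, where surjectivity becomes the elementary question of solving $\gamma_4=c_3c_4^3$, $\gamma_5=-c_3^3c_4$, isolating exactly two missing points per incident pair and hence at most $480$ candidates forming one orbit. A secondary soft spot in your write-up: the finiteness of $Z=\{p: N(p)\ge 6\}$ does not follow from ``five incidence conditions,'' because pairs of the $40$ planes can meet in lines rather than points (e.g.\ the planes attached to $[1:0:0:0]$ and $[0:1:0:0]$ meet in the line $\gamma_1=\gamma_2=\gamma_3=0$); you would need to check that no line lies on six of the planes. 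Both defects are likely repairable --- either by computing the ramification of $\tilde\gamma$ along $E$ and redoing the count with multiplicities, or by following the paper's two-step descent --- but as written the inclusion half of your proof does not go through.
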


\begin{proof}
In Remark~\ref{rmk:gorenstein}, we observed that the base locus of the rational map \eqref{eqn:cmap} (Family 6) is defined by the $4 \times 4$ Pfaffians of the matrix $\psi(\bc)$. In particular, we can resolve this rational map as follows. Define
\[
\Gamma_\bc = \{ (\bc, \gamma) \in \bP^3 \times \bP^4 \mid \psi(\bc) \cdot \gamma = 0\}
\]
where here we treat $\gamma$ as a column vector of length $5$. Let $\pi_1,\pi_2$ be the two projection maps from $\Gamma_\bc$. Then $\pi_1$ is an isomorphism over $\bP^3 \setminus \text{Family 6}$ and the map $\pi_2\pi_1^{-1} \colon \bP^3 \setminus \text{Family 6} \to \cB$ agrees with \eqref{eqn:cmap}. We know that $\pi_2(\Gamma_\bc) \subset \bP^4$ is closed, so it is enough to show that for any point $\gamma \in \cB$ different from the $160$ points mentioned above, we have that $\gamma$ is in the kernel of some $\psi(\bc')$ where $\bc'$ is not in Family 6. So we may as well assume that $\psi(\bc) \cdot \gamma = 0$ where $\bc$ is in Family 6 (otherwise there is nothing to show).

Every map we have defined is equivariant with respect to $\rG_{32}$ and Family 6 forms a single orbit under $\rG_{32}$, so it is enough to prove this for a single point $\bc$ in Family 6. We choose the point $\bc = [1:0:0:0]$. Then the kernel of $\psi(\bc)$ is a $\bP^2$ cut out by $\gamma_1 = \gamma_2 = 0$. So for arbitrary $\gamma_3, \gamma_4, \gamma_5$ outside of the collection of $160$ points, we need to find a point $\bc'$ outside of Family 6 that maps to this point under \eqref{eqn:cmap}. If we set $c_1=0$, then we automatically get $\gamma_1 = \gamma_2 = 0$ and we need to solve the equations
\[
\gamma_3 = c_2(c_3^3 - c_4^3), \qquad \gamma_4 = c_3(-c_2^3 + c_4^3), \qquad \gamma_5 = c_4(c_2^3 - c_3^3).
\]
These functions are in the kernel of the following matrix:
\[
\begin{pmatrix}
c_2^2 & c_3^2 & c_4^2\\
c_3c_4 & c_2c_4 & c_2c_3
\end{pmatrix}.
\]
So as before, we can resolve the rational map defined by $\gamma_3, \gamma_4, \gamma_5$ by using kernel elements of this map. There are $12$ points $[c_2:c_3:c_4]$ for which this matrix has rank $1$, and we focus on one of them, namely the point $\tilde{\bc} = [0:1:0:0]$. In this case, the kernel is cut out by $\gamma_3 = 0$. If we set $c_2 = 0$, then we get $\gamma_3 = 0$ and then we need to solve the equations $\gamma_4 = c_3 c_4^3$ and $\gamma_5 = -c_3^3c_4$ for arbitrary values of $\gamma_4$ and $\gamma_5$. This is possible assuming that $\gamma_4 \gamma_5 \ne 0$. The points $[0:0:0:1:0]$ and $[0:0:0:0:1]$ are in $\cB$ and this suggests that they are not in the image of \eqref{eqn:cmap} (one can check that they are not). They are the only two missing points given our choice of $\bc, \tilde{\bc}$. 

There are $40 \cdot 12 / 2 = 240$ choices for the unordered pair $\bc, \tilde{\bc}$, and hence we get at most $480$ missing points. One can check that $\rG_{32}$ acts transitively on the set of unordered pairs $\bc, \tilde{\bc}$, so these missing points also form a single orbit under $\rG_{32}$ (in the above example, $[0:0:0:1:0]$ and $[0:0:0:0:1]$ are conjugate under $\rG_{32}$). A direct calculation shows that the orbit has size $160$, so we are missing $160$ points.
\end{proof}

\begin{remark}
If we study Coble quartics instead of Coble cubics (see \cite{beauville}), then the analogue of the Burkhardt quartic is called the G\"opel variety in \cite{univkummer}. It was shown by Coble \cite[\S 49]{coble2} to be cut out set-theoretically by cubics. The entire prime ideal was determined in \cite[\S 5]{univkummer}.
\end{remark}

\section{Degenerations of $(3,3)$-polarized Abelian surfaces} \label{sec:w39degenerations}

Now we study the behavior of the variety defined by the $6 \times 6$ Pfaffians of the matrix \eqref{eqn:phimatrix}. The construction starting with the vector only depends on its $G \times \bG_m$-orbit, and hence when restricted to $\fh$, only depends on its orbit under $\rG_{32} \times \bG_m$. So we only need to study one flat from each orbit in Table~\ref{tab:G32flats}. We will show that the description of the Pfaffian locus is uniform for each flat. The summary is as follows:
\begin{compactenum}
\item Family 1 gives smooth Abelian surfaces of degree 18 in $\bP^8$. We have already discussed these in detail in \S\ref{sec:coblecubics}.
\item Families 2 and 3 can be treated together. Here the Pfaffian locus is still a degree $18$ surface. In Family 2, we get the union of the $3$ smooth surfaces, which are ruled surfaces over a common smooth plane cubic (Theorem~\ref{thm:ers}). Family 3 is the degeneration of this case where the smooth plane cubic degenerates to the triangle given by $xyz=0$ (Theorem~\ref{thm:ers-deg}).
\item In Family 4 and lower, we lose flatness. In particular, in Families 4 and 5, the Pfaffian locus jumps up to dimension $3$. Family 4 parametrizes smooth plane cubics $C$, and the Pfaffian locus is the Segre embedding of $C \times \bP^2 \subset \bP^8$. In Family 5, the curve $C$ degenerates to the triangle $xyz=0$. These are treated in \S\ref{sec:planecubics}.
\item In Family 6, the Pfaffian locus is the union of $3$ copies of $\bP^5$. This is the case when the coefficients of the Coble cubic all identically vanish.
\end{compactenum}

\subsection{Family 2: Elliptic ruled surfaces} \label{sec:w39family2}

We choose the flat defined by $c_4=0$ as our representative for Family 2. The general element is 
\[
c_1h_1 + c_2h_2 + c_3h_3.
\]
The condition that this point lies outside of Family 3 is
\begin{align} \label{eqn:f2avoidf3} 
c_1c_2c_3  [(c_1^3+c_2^3-c_3^3)^3 + (3c_1c_2c_3)^3] \ne 0,
\end{align}
and the condition that it lies outside of Family 4 is
\begin{align} \label{eqn:f2avoidf4}
(c_1^3-c_2^3) (c_2^3+c_3^3) (c_1^3+c_3^3) \ne 0. 
\end{align}

On the geometric side, the Jacobian ideal is equal to the quadrics which contain the Pfaffian locus. In fact, the Jacobian locus is the union of the Pfaffian locus with the three $\bP^2$'s defined by
\begin{align*}
z_1 = z_2 = z_4 = z_6 = z_8 = z_9=0, \\ 
z_1 = z_3 = z_5 = z_6 = z_7 = z_8=0, \\
z_2 = z_3 = z_4 = z_5 = z_7 = z_9=0.
\end{align*}

The Pfaffian locus has three components $X_1, X_2, X_3$ which are irreducible for generic choices of $c_1,c_2,c_3$. They are obtained by intersecting $X$ with the $\bP^5$'s defined by $z_1=z_6=z_8=0$, $z_3=z_5=z_7=0$, and $z_2=z_4=z_9=0$, respectively. For a generic choice of $c_1,c_2,c_3$ (i.e., working with the function field in {\tt Macaulay 2}), the minimal generators for the ideal of $X_1$ is given by
\begin{equation} \label{eqn:f2X1eqns}
\begin{split}
z_1, z_6, z_8,\\
c_1(c_2^3+c_3^3)z_4z_5-c_2(c_1^3+c_3^3)z_3z_9-c_3(c_1^3-c_2^3)z_2z_7,\\
c_1(c_2^3+c_3^3)z_7z_9-c_2(c_1^3+c_3^3)z_2z_5-c_3(c_1^3-c_2^3)z_3z_4,\\
c_1(c_2^3+c_3^3)z_2z_3-c_2(c_1^3+c_3^3)z_4z_7-c_3(c_1^3-c_2^3)z_5z_9,\\
c_1c_2c_3(z_3^3+z_5^3+z_7^3) + (c_1^3+c_2^3-c_3^3)z_3z_5z_7,\\
c_1c_2c_3(z_2^3+z_4^3+z_9^3) + (c_1^3+c_2^3-c_3^3)z_2z_4z_9.
\end{split}
\end{equation}
While the quadrics are not $6 \times 6$ Pfaffians, they belong to the radical of the ideal generated by the $6 \times 6$ Pfaffians (by the chain rule applied to taking partial derivatives of $8 \times 8$ Pfaffians), so we may use them in some calculations involving dimension. The ideals of $X_2$ and $X_3$ are similar. From this, we see that the pairwise intersections $X_i \cap X_j$ are (the same) plane cubics written in Hesse normal form (we use variables $x,y,z$):
\begin{align} \label{eqn:family23cubic}
c_1c_2c_3(x^3 + y^3 + z^3) + (c_1^3 + c_2^3 - c_3^3)xyz.
\end{align}

We will see in Proposition~\ref{prop:f2regseq} that each $X_i$ is at most $2$-dimensional. Since $X$ is constructed as a Pfaffian degeneracy locus (scheme-theoretically, not ideal-theoretically), by the generic perfection theorem \cite[Theorem 3.5]{bv}, this is the minimum possible dimension that any of its components can attain, so we conclude that it is exactly $2$-dimensional, and that $\deg(X) = 18$. By symmetry, each of its $3$ components has degree $6$. To prove that \eqref{eqn:f2X1eqns} are in fact all of the equations for $X_1$ whenever \eqref{eqn:f2avoidf3} and \eqref{eqn:f2avoidf4} hold, we need to know that they do not define a surface of degree strictly greater than $6$. It is easy to see that the complete intersection of Proposition~\ref{prop:f2regseq} contains the two $\bP^2$'s defined by $z_3=z_5=z_7=0$ and $z_2=z_4=z_9=0$ as components, and that neither one of them is contained in the surface defined by the equations \eqref{eqn:f2X1eqns}. Hence \eqref{eqn:f2X1eqns} defines a surface of degree at most $6$, and hence is exactly $X_1$.

\begin{proposition} \label{prop:f2regseq}
The $3$ quadrics in the ideal of the varieties $X_i$ (see \eqref{eqn:f2X1eqns}) form a regular sequence.
\end{proposition}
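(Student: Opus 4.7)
Strategy overview. Since the polynomial ring $K[z_2,z_3,z_4,z_5,z_7,z_9]$ is Cohen--Macaulay, three elements of it form a regular sequence if and only if their common vanishing locus has codimension exactly $3$. Krull's height theorem forces the codimension to be at most $3$, so the plan is to establish the lower bound: show that $V(Q_1,Q_2,Q_3) \subset \bP^5$ has dimension at most $2$ (equivalently, the affine cone has dimension at most $3$ in $\bA^6$).

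Key observation. The six variables split into two triples along the partition of $\bF_3^2 \setminus \{y=-x\}$ given by the two other cosets of this line: set $u = (z_2, z_4, z_9)$ and $v = (z_3, z_5, z_7)$. Inspection of \eqref{eqn:f2X1eqns} shows that every monomial appearing in $Q_1,Q_2,Q_3$ is of the form $u_i v_j$, so each $Q_k = u^T M_k v$ for an explicit $3 \times 3$ matrix $M_k$ with three nonzero entries (placed on the three anti-diagonals indexed by $i+j \equiv \text{const} \pmod 3$). Writing $A = c_1(c_2^3+c_3^3)$, $B = c_2(c_1^3+c_3^3)$, $C = c_3(c_1^3-c_2^3)$, conditions \eqref{eqn:f2avoidf3} and \eqref{eqn:f2avoidf4} force $ABC \ne 0$.

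Dimension count. Consider the projection $\pi \colon \bP^5 \setminus \bP\{v=0\} \to \bP(v)$, which expresses the source as an $\bA^3$-bundle over $\bP^2$. The fiber of $V(Q_1,Q_2,Q_3)$ over $[v_0]$ is the linear subspace of $u$ cut out by the three linear forms $u^T M_k v_0 = 0$; its dimension is $3 - \rank N(v_0)$, where $N(v_0)$ is the $3\times 3$ matrix with columns $M_k v_0$. A direct expansion gives
\[
\det N(v) = (A^3 - B^3 - C^3)\, v_1 v_2 v_3 - ABC\,(v_1^3 + v_2^3 + v_3^3),
\]
a Hesse-type cubic (indeed the same cubic \eqref{eqn:family23cubic} after substitution), which is a nonzero polynomial thanks to $ABC \ne 0$. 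Therefore the generic fiber over $\bP^2$ is $0$-dimensional, the fiber jumps to dimension $1$ only over the cubic curve $\{\det N = 0\}$, and to dimension $2$ only over the $2 \times 2$ minor locus (a finite set of points). Summing base and fiber contributions gives $\dim (V \setminus \bP\{v=0\}) \le 2$, and since $\bP\{v=0\}$ itself contributes dimension $2$, we conclude $\dim V \le 2$. Combined with the Cohen--Macaulay property of the polynomial ring, this yields the desired regular sequence.

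Expected obstacles. The main subtlety is the degeneration behavior over loci where the bilinear pencil becomes very degenerate; one must verify that no stratum in the base where $N(v_0)$ drops rank contributes an unexpectedly large component. This is controlled purely by the rank stratification of $N(v)$ on $\bP^2$, which is dominated by the base dimension plus the fiber jump, and the Hesse cubic above shows that the top-rank locus is Zariski open and nonempty. The remaining work is the determinant computation and a careful dimension tally fiberwise, both of which are routine once the bilinear structure is exposed.
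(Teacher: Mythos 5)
Your proposal is correct, and although it turns on the same polynomial identity as the paper's proof, the surrounding logic is genuinely different. The paper restricts the Jacobian matrix of the three quadrics to the plane $z_2=z_4=z_9=0$ (your $\{u=0\}$); because the quadrics are bilinear in $(u,v)$, that restricted Jacobian is (up to signs and ordering) exactly your matrix $N(v)$, and the paper's ``single polynomial'' is your $\det N(v)$, which equals, up to the nonzero factor $(c_2^3+c_3^3)(c_1^3+c_3^3)(c_1^3-c_2^3)$, the Hesse cubic \eqref{eqn:family23cubic}. The difference is what is done with this computation: the paper exhibits one point of that plane where the Jacobian has rank $3$ and invokes ``rank at a point is at most the codimension,'' which bounds the codimension only of the components passing \emph{through that point}; you instead project $\bP^5\setminus\bP\{v=0\}\to\bP^2$ and bound the dimension of the entire variety $V(Q_1,Q_2,Q_3)$ by a stratified fiber-dimension count. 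Since the regular-sequence criterion in a Cohen--Macaulay ring requires \emph{every} irreducible component to have codimension $3$, your global count delivers precisely what is needed, and in this respect your argument is tighter than the paper's pointwise one; the price is that you must control all rank strata of $N(v)$ rather than locate a single full-rank point.

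On that last point, two assertions in your sketch should be made explicit, and both are one-liners. First, $\rank N(v_0)\ge 1$ for every $v_0\ne 0$, so no fiber has dimension $3$: each $M_k$ is a generalized permutation matrix whose nonzero entries are among $\pm A,\pm B,\pm C$, hence invertible since $ABC\ne 0$. Second, the rank $\le 1$ locus really is finite: among the $2\times 2$ minors of $N(v)$ one finds $ABv_2^2+C^2v_1v_3$ and $BCv_3^2-A^2v_1v_2$, two non-proportional irreducible conics (using $ABC\ne 0$ and that the characteristic is not $2$), so the rank $\le 1$ locus lies in their finite intersection. With these two facts every stratum contributes at most $2$ to the dimension, and your proof closes.
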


\begin{proof}
We just focus on $X_1$ since the other two are similar. Note that the plane $z_2=z_4=z_9=0$ is contained in the intersection of these $3$ quadrics. If we substitute this into the Jacobian matrix of these $3$ quadrics, we get
\[
\begin{pmatrix}
c_3(c_2^3-c_1^3) z_{7}& c_2(c_1^3+c_3^3) z_{5}& c_1(c_2^3+c_3^3) z_{3}\\
0&      0&      0\\
c_1(c_2^3+c_3^3) z_{5}& c_3(c_1^3-c_2^3) z_{3}& -c_2(c_1^3 +c_3^3) z_{7}\\
0&      0&      0\\
0&      0&      0\\
-c_2(c_1^3+c_3^3) z_{3}&      -c_1(c_2^3+c_3^3) z_{7}& c_3(c_2^3-c_1^3) z_{5}
\end{pmatrix},
\]
so the maximal minors reduce to a single polynomial
\[
(c_2^3+c_3^3) (c_1^3+c_3^3) (c_1^3-c_2^3) (c_1 c_2 c_3 (z_{3}^{3} + z_5^3 + z_7^3) +(c_1^{3}+c_2^{3}-c_3^{3}) z_{3} z_{5} z_{7}).
\]
By the conditions \eqref{eqn:f2avoidf3} and \eqref{eqn:f2avoidf4}, this is a nonzero cubic polynomial, so picking a point $[z_3:z_5:z_7]$ not on the cubic curve gives a point on $X_1$ where the Jacobian has rank $3$, and hence the codimension must be at least $3$ since the rank is bounded above by the codimension of the variety.
\end{proof}

\begin{theorem} \label{thm:ers}
Let $C$ be the plane cubic defined in \eqref{eqn:family23cubic}. There exist line bundles $\cL_1, \cL_2, \cL_3$ of degree $6$ on $C$ so that $X_i = \Proj(\cO_C \oplus \cL_i)$.
\end{theorem}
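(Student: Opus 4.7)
The plan is to exhibit each $X_i$ explicitly as an elliptic scroll and then to read off the splitting of the ruling. I focus on $X_1$; the arguments for $X_2$ and $X_3$ are parallel.

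The three bilinear quadrics in \eqref{eqn:f2X1eqns} can be organized as the entries of $M(z_3,z_5,z_7)\cdot(z_2,z_4,z_9)^T$, where $M$ is a $3\times 3$ matrix of linear forms. A direct expansion shows that $\det M$ is a scalar multiple of the Hesse cubic \eqref{eqn:family23cubic} --- the scalar being $-(c_1^3-c_2^3)(c_2^3+c_3^3)(c_1^3+c_3^3)$, which is nonzero by \eqref{eqn:f2avoidf4}. Hence $M$ drops rank precisely on $C':= X_1\cap\bP^2_{z_3,z_5,z_7}$ (the Hesse cubic in the first $\bP^2$), and a nonzero $2\times 2$ minor witnesses that the rank is exactly $2$ there. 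The projective kernel of $M$ therefore defines a morphism $\phi\colon C'\to\bP^2_{z_2,z_4,z_9}$. By the symmetric role played by the two triples of variables, a parallel construction with $M^T$ yields an inverse correspondence and shows both that $\phi$ maps $C'$ into the other Hesse cubic $C'':= X_1\cap\bP^2_{z_2,z_4,z_9}$ and that $\phi\colon C'\xrightarrow{\sim} C''$ is an isomorphism of elliptic curves.

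Next, for each $p\in C'$, let $L_p\subset\bP^5$ be the line joining $p$ to $\phi(p)\in C''$. Substituting a linear parametrization of $L_p$ into each of the five generators of the ideal in \eqref{eqn:f2X1eqns} shows $L_p\subseteq X_1$: the three bilinear quadrics vanish because $\phi(p)\in\ker M(p)$, and the two Hesse cubics vanish because $p\in C'$ and $\phi(p)\in C''$. The union $S:=\bigcup_{p\in C'} L_p$ is the scroll joining two disjoint plane cubics via $\phi$, so it has degree $\deg C'+\deg C'' = 6 = \deg X_1$, forcing $S = X_1$. This realises $X_1\to C$ as a $\bP^1$-bundle with two disjoint sections $C'$ and $C''$.

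A $\bP^1$-bundle over $C$ with two disjoint sections is split, so $X_1 \cong \Proj_C(\cO_C\oplus\cL_1)$ for some line bundle $\cL_1$, and $\deg\cL_1 = 6$ is then read off from the intersection-theoretic data on the scroll (hyperplane class, ruling, and the two sections). The main technical hurdle is verifying that the kernel correspondence $\phi$ actually takes $C'$ into $C''$ rather than into some other curve of $\bP^2_{z_2,z_4,z_9}$; this is the step which guarantees that each ruling line lies in $X_1$, and without which the scroll construction would not fill out $X_1$.
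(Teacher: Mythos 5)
Your argument is correct in its geometric core and takes a genuinely different route from the paper. The paper proceeds by liaison: $X_1$ is linked, inside the complete intersection of the three quadrics of \eqref{eqn:f2X1eqns}, to the union of the two disjoint planes $\{z_3=z_5=z_7=0\}$ and $\{z_2=z_4=z_9=0\}$; Eisenbud's linkage theorems then show $X_1$ is Cohen--Macaulay and Gorenstein with $\omega_{X_1}$ the ideal sheaf of the two plane cubics, and the ruling is extracted rather indirectly from a mapping-cone resolution and an embedding into the flag variety $\Fl(4,5;6)$. You instead exploit the bilinearity of the quadrics, and your key computation checks out: $\det M$ is $-(c_1^3-c_2^3)(c_2^3+c_3^3)(c_1^3+c_3^3)$ times the Hesse cubic \eqref{eqn:family23cubic}, a nonzero multiple by \eqref{eqn:f2avoidf4}. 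Your treatment of the hurdle you flag ($\phi(C')\subseteq C''$) is right, and is cleanest via the identity $M(z_3,z_5,z_7)(z_2,z_4,z_9)^T=N(z_2,z_4,z_9)(z_3,z_5,z_7)^T$, both sides being the vector of the three quadrics (note the partner matrix $N$ is not literally $M^T$). Two asserted steps need justification but are easily supplied: (i) rank exactly $2$ along $C'$ --- at a rank $\le 1$ point all $2\times 2$ minors vanish, so the point would be singular on $\{\det M=0\}$, contradicting Proposition~\ref{prop:f2cubicsmooth}; alternatively a $2$-dimensional kernel would force a line inside the smooth cubic $C''$; (ii) the passage from ``union of lines indexed by $C'$'' to ``$\bP^1$-bundle'', which needs the lines to be pairwise disjoint (immediate from injectivity of $\phi$) and the tautological map of $\Proj(\cL'\oplus\cL'')$ into $\bP^5$ to be a closed immersion (it is, since $\rH^1(C,(\cL'\oplus\cL'')(-p-q))=0$ for all $p,q$, each summand having degree $3$ on an elliptic curve). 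What your route buys is complete explicitness; what the paper's route buys is that Cohen--Macaulayness, Gorensteinness and the resolution of $X_1$ come out as by-products and are reused elsewhere.

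The genuine gap is your final sentence: the degree of $\cL_1$ cannot be ``read off'' as $6$, because the intersection numbers you invoke actually give $0$. On the scroll one has $H\cdot f=1$, $H\cdot C'=H\cdot C''=3$, $C'\cdot C''=0$, $H^2=6$; writing $H\equiv C'+bf$ in $\operatorname{Num}(X_1)$ forces $b=3$ and $(C')^2=(C'')^2=0$. Two disjoint sections of self-intersection $0$ mean that what your construction literally produces is $X_1=\Proj(\cL'\oplus\cL'')$ with $\deg\cL'=\deg\cL''=3$, equivalently $X_1\cong\Proj(\cO_C\oplus\cM_1)$ with $\cM_1=\cL''\otimes(\cL')^{-1}$ of degree $0$; no twist can convert $\cL'\oplus\cL''$ into $\cO_C\oplus(\text{degree }6)$, since $\bP(\cO_C\oplus\cL)$ has sections of self-intersection $\pm\deg\cL$. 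So carrying out the intersection theory you propose would contradict the stated degree rather than confirm it. To be fair, this loose end is inherited from the paper: its own proof stops at ``normalize the line bundles to look like $\cO_\Gamma\oplus\cL_1$'' and never computes a degree, so the ``$6$'' in the statement is best read as the total degree of the split bundle $p_*\cO_{X_1}(1)=\cL'\oplus\cL''$. But as written, your last step asserts a computation whose outcome is $0$, not $6$, and that step cannot be repaired in the form you state it.
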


\begin{proof}
The situation is symmetric, so we focus on $S = X_1$, whose equations are displayed in \eqref{eqn:f2X1eqns}. We will work in the $\bP^5$ defined by $z_1 = z_6 = z_8 = 0$. By Proposition~\ref{prop:f2regseq}, the $3$ quadrics in this list form a complete intersection $T$ of degree $8$. Since $\deg(X_1) = 6$, we see that $X_1$ is linked, in this complete intersection, to a degree $2$ variety, which is in fact the union of two $\bP^2$'s which are defined by $z_3 = z_5 = z_7 = 0$ and $z_2 = z_4 = z_9 = 0$, respectively. We will call them $\bP$ and $\bP'$.

Let $\cI_S \subset \cO_T$ be the ideal sheaf of $S$, and let $\cI_P \subset \cO_T$ be the ideal sheaf of $P = \bP \cup \bP'$. From the Koszul complex, we see that $\cO_T$ is the dualizing sheaf of $T$. Then we have 
\[
\cI_S = \cH om_{\cO_T}(\cO_{P}, \cO_T) = \omega_{\bP} \oplus \omega_{\bP'} = \cO_{\bP}(-3) \oplus \cO_{\bP'}(-3),
\]
where the first equality follows from \cite[Theorem 21.23]{eisenbud} and the second equality follows from \cite[Theorem 21.15]{eisenbud}. Since $\bP \cap \bP' = \emptyset$, their union is a Cohen--Macaulay scheme, so by \cite[Theorem 21.23(b)]{eisenbud}, $S$ is also Cohen--Macaulay. Similarly, we see that $\cI_P = \omega_S$. The image of $\cI_P$ in $\cO_S$ is generated by $2$ cubics, but these cubic curves are disjoint, so it is locally generated by a single cubic. In particular, by \cite[Theorem 21.23(c)]{eisenbud}, $S$ is Gorenstein and $\omega_S$ is a line bundle.

In particular, we have the following Koszul complexes
\begin{align*}
0 \to \cO_{\bP^5}(-6)^2 \to \cO_{\bP^5}(-5)^6 \to \cO_{\bP^5}(-4)^6 \to \cO_{\bP^5}(-3)^2 \to \cI_S \to 0,\\
0 \to \cO_{\bP^5}(-6) \to \cO_{\bP^5}(-4)^3 \to \cO_{\bP^5}(-2)^3 \to \cO_{\bP^5} \to \cO_T \to 0.
\end{align*}
From the short exact sequence $0 \to \cI_S \to \cO_T \to \cO_S \to 0$, we construct a mapping cone of the two complexes to get a locally free resolution of length $4$ for $\cO_S$. We can replace the last differential of the mapping cone $\cO_{\bP^5}(-6)^2 \to \cO_{\bP^5}(-6) \oplus \cO_{\bP^5}(-5)^6$ with its cokernel (the cokernel is locally free by the Auslander--Buchsbaum formula \cite[Theorem 19.9]{eisenbud} since $S$ is Cohen--Macaulay). The map $\cO_{\bP^5}(-6)^2 \to \cO_{\bP^5}(-6)$ must be nonzero (otherwise by Ext duality, the map $\cI_S \to \cO_T$ is zero). In fact, the map is nonzero on both factors of $\cO_{\bP^5}(-6)$ in the target, so we remove a diagonal copy of $\cO_{\bP^5}(-6)$ from the source which maps isomorphically to the target, and are left with the cokernel of $\cO_{\bP^5}(-6) \to \cO_{\bP^5}(-5)^6$. Since this map is built out of two copies of the Koszul complex for the ideals $(z_2,z_4,z_9)$ and $(z_3,z_5,z_7)$, we see that after a suitable choice of bases, this map is the column vector $(z_2,z_3,z_4,z_5,z_7,z_9)^T$, so we can identify the cokernel with $\cQ(-5)$, where $\cQ$ is the tautological quotient bundle on $\bP^5$. In particular, we get the following locally free resolution
\[
0 \to \cQ(-5) \to \cO_{\bP^5}(-4)^9 \to \cO_{\bP^5}(-3)^2 \oplus \cO_{\bP^5}(-2)^3 \to \cO_{\bP^5} \to \cO_S \to 0.
\]
Now we apply $\cH om_{\bP^5}(-,\cO_{\bP^5}(-6))$ to get the locally free resolution
\begin{align} \label{eqn:linkagelfr}
0 \to \cO_{\bP^5}(-6) \to \cO_{\bP^5}(-4)^3 \oplus \cO_{\bP^5}(-3)^2 \to \cO_{\bP^5}(-2)^9 \to \Omega_{\bP^5} \to \omega_S \to 0.
\end{align}
Since $S$ is Gorenstein, $\omega_S$ is a line bundle, and so the kernel of the surjection $\Omega_{\bP^5}(1) \otimes \cO_S \to \omega_S(1)$ is a rank $4$ vector bundle $\cK$. Using the Euler exact sequence
\[
0 \to \Omega_{\bP^5}(1) \to \cO_{\bP^5}^6 \to \cO_{\bP^5}(1) \to 0
\]
restricted to $S$, we get a partial flag $\cK \subset \Omega_{\bP^5}(1)|_S \subset \cO_S^6$ whose successive quotients are line bundles, and hence a morphism
\[
\sigma \colon S \to {\bf Fl}(4,5;6) = F,
\]
where $F$ is the variety of partial flags of subspaces with dimensions $4$ and $5$. If $p_2 \colon F \to \bP^5$ is the second projection, then $p_2 \sigma$ maps $S$ to itself, and hence $\sigma$ is an embedding. Let $p_1 \colon S \to \Gr(4,6)$ be the restriction of the first projection map.

We can identify $F$ with the associated projective bundle of $\cQ^*(1)$ over $\bP^5$. Take the map $\cO_{\bP^5}(-2)^9 \to \Omega_{\bP^5}$ from \eqref{eqn:linkagelfr}, twist by $\cO_{\bP^5}(2)$ and pullback to $F$. Then $\sigma(S)$ is the locus where this map fails to be surjective. Note that $\rH^0(F; p_2^*(\Omega_{\bP^5}(2)))$ is naturally identified with $\rH^0(\Gr(4,6); \cO_{\Gr(4,6)}(1))$. So taking sections of the map $\cO_F^9 \to p_2^*(\Omega_{\bP^5}(2))$, the image gives $9$ linear relations in the ambient space of the Pl\"ucker embedding of $\Gr(4,6)$, and from the surjectivity statement above, we see that $\sigma(S) = p_1^{-1}(\Gamma)$ where $\Gamma$ is the zero locus of these $9$ equations in $\Gr(4,6)$. In particular, the fibers of $p_1$ are all projective lines, so $\Gamma$ is a curve, and $S$ is a ruled surface over $\Gamma$.

Earlier, we saw that $\cI_P = \omega_S$. In particular, the ideal sheaf of $(\bP \cup \bP') \cap S$ in $\cO_S$ is $\omega_S$. The pullback via $p_1$ of the ample line bundle on $\Gr(4,6)$ to $S$ is $\omega_S(2)$, so identifying $S$ with $\sigma(S) \subset F$, we get 
\[
\omega_S = \cO_S \otimes p_1^*(\cO_{\Gr(4,6)}(1)) \otimes p_2^*(\cO_{\bP^5}(-2)).
\]
We know that $(\bP \cup \bP') \cap S$ consists of two disjoint curves $C$, $C'$, each contained in a disjoint copy of $\bP^2$. We claim that the restriction maps $p_1 \colon C \to \Gamma$ and $p_1 \colon C' \to \Gamma$ are isomorphisms. Suppose that the first map is not an isomorphism. Then $C$ intersects some fiber of $p_1 \colon F \to \Gr(4,6)$ in at least $2$ points. But this fiber represents a line in $S$, and $C$ is a plane cubic curve in $\bP^2$. This means that the line must lie in this $\bP^2$ and hence intersects $C$ in at least $3$ points, which contradicts that the ideal of $C \cup C'$ has bidegree $(1,2)$.

In conclusion, $S$ is a ruled surface over $\Gamma$, which is a genus $1$ curve. In particular, $S$ is the associated projective bundle of a rank $2$ bundle over $\Gamma$. Since we have two disjoint sections of the ruling, this rank $2$ bundle splits into a direct sum of $2$ line bundles $L \oplus L'$. The bundle is well-defined up to twisting by a line bundle, so we may normalize the line bundles to look like $\cO_\Gamma \oplus \cL_1$.
\end{proof}

\begin{remark}
Since Family 2 gives a $2$-dimensional parameter space, the triple $(\cL_1, \cL_2, \cL_3)$ from Theorem~\ref{thm:ers} is not general. We expect these triples of line bundles to be related to the triples described in \cite[\S 2.2]{weiho}.
\end{remark}

\begin{proposition} \label{prop:f2cubicsmooth}
The plane cubic \eqref{eqn:family23cubic} is smooth. In particular, each $X_i$ is smooth.
\end{proposition}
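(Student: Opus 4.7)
The plan is to put the cubic into Hesse normal form and read off smoothness from the classical criterion.

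First I would assume $c_1c_2c_3 \ne 0$; this is part of condition \eqref{eqn:f2avoidf3}. Then the cubic \eqref{eqn:family23cubic} can be rescaled to the standard Hesse form
\[
x^3+y^3+z^3 + \mu\, xyz = 0, \qquad \mu = \frac{c_1^3+c_2^3-c_3^3}{c_1c_2c_3}.
\]
It is a classical fact (checked by computing the Jacobian ideal and observing that any singular point must have $xyz \ne 0$, after which $\mu$ is forced) that a Hesse cubic is smooth if and only if $\mu^3 \ne -27$, equivalently
\[
(c_1^3+c_2^3-c_3^3)^3 + 27(c_1c_2c_3)^3 \ne 0,
\]
which is exactly the non-vanishing of the second factor of \eqref{eqn:f2avoidf3}. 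Hence hypothesis \eqref{eqn:f2avoidf3} forces $C$ to be smooth.

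For the smoothness of each $X_i$, I would invoke Theorem~\ref{thm:ers}: $X_i \cong \Proj(\cO_C \oplus \cL_i)$ is a $\bP^1$-bundle over the smooth curve $C$, and the total space of a projective bundle over a smooth base is smooth. The main (and only) obstacle is really just the verification of the Hesse smoothness criterion in the form stated, but this is standard and matches the shape of the discriminant factor already recorded in \eqref{eqn:discw39}, so no real work is needed beyond the algebraic identification.
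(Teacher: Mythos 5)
Your proposal is correct and follows essentially the same route as the paper: apply the classical smoothness criterion for the Hesse pencil to \eqref{eqn:family23cubic}, then deduce smoothness of each $X_i$ from the ruled-surface description in Theorem~\ref{thm:ers}. One point in your favor: you identify the smoothness condition as the non-vanishing of \eqref{eqn:f2avoidf3}, which is the correct identification, since with $\lambda = c_1c_2c_3$ and $\mu = c_1^3+c_2^3-c_3^3$ one has
\[
\lambda(\mu^3+27\lambda^3) \;=\; c_1c_2c_3\bigl[(c_1^3+c_2^3-c_3^3)^3 + (3c_1c_2c_3)^3\bigr],
\]
exactly the expression in \eqref{eqn:f2avoidf3}; the paper's proof instead asserts equivalence with \eqref{eqn:f2avoidf4}, which appears to be a slip (the polynomial $(c_1^3+c_2^3-c_3^3)^3+27(c_1c_2c_3)^3$ is not divisible by any of the factors $c_1^3-c_2^3$, $c_2^3+c_3^3$, $c_1^3+c_3^3$). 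Since the Family 2 setup assumes both \eqref{eqn:f2avoidf3} and \eqref{eqn:f2avoidf4}, the proposition is true either way, and your argument proves it cleanly.
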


\begin{proof}
We recall that the cubic curve $\lambda(x^3+y^3+z^3) + \mu xyz = 0$ is smooth if and only if $\lambda(\mu^3 + 27\lambda^3) \ne 0$. This is equivalent to \eqref{eqn:f2avoidf4} by setting $\lambda = c_1c_2c_3$ and $\mu = c_1^3 + c_2^3 - c_3^3$. The last statement follows since each $X_i$ is a ruled surface over the plane cubic \eqref{eqn:family23cubic} by Theorem~\ref{thm:ers}.
\end{proof}

\subsection{Family 3: Degenerate elliptic ruled surfaces} \label{ss:w39family3}

We consider the flat defined by $c_3 = c_4 = 0$. Hence we may parametrize this as the family
\[
c_1 h_1 + c_2 h_2.
\]
This defines a line in the Burkhardt quartic with equations $\gamma_1 = \gamma_4 = \gamma_5 = 0$. The condition that this point lies outside Family 5 is that $c_1^6 - c_2^6 \ne 0$, and the condition that the point lies outside Family 6 is that $c_1c_2 \ne 0$. Hence we will assume that
\begin{align} \label{eqn:family3disc}
c_1c_2(c_1^6 - c_2^6) \ne 0.
\end{align}

The Jacobian ideal \eqref{eqn:jacobian} is the intersection of the Pfaffian ideal with $6$ linear ideals:
\begin{align*}
\begin{array}{ll}
      z_{1} = z_{2} = z_{4} = z_{6} = z_{8} = z_{9} = 0,&
      z_{1} = z_{2} = z_{5} = z_{6} = z_{7} = z_{9} = 0,\\
      z_{1} = z_{3} = z_{4} = z_{5} = z_{8} = z_{9} = 0,&
      z_{1} = z_{3} = z_{5} = z_{6} = z_{7} = z_{8} = 0,\\
      z_{2} = z_{3} = z_{4} = z_{5} = z_{7} = z_{9} = 0,&
      z_{2} = z_{3} = z_{4} = z_{6} = z_{7} = z_{8} = 0.
\end{array}
\end{align*}

Using that $c_2(c_1^6 - c_2^6) \ne 0$, the Pfaffian ideal is minimally generated by $9$ quadrics and $7$ cubics. This is a degeneration of Family 2, so we can decompose the Pfaffian ideal into $3$ pieces $X_1, X_2, X_3$. However, they are no longer irreducible.
In fact, each one decomposes further into $3$ more pieces, and we write $X_i = X_{i,1} \cup X_{i,2} \cup X_{i,3}$. Each one lies in a $\bP^3$ and the equations are of the form $c_2^2 xy - c_1^2 zw$. For example, $X_1$ is the union of
\begin{align*}
z_8 = z_7 = z_6 = z_2 = z_1 = c_2^2 z_4z_5 -  c_1^2 z_3z_9 = 0,\\
z_9 = z_8 = z_6 = z_5 = z_1 = c_2^2 z_2z_3 - c_1^2 z_4z_7 = 0,\\
z_8 = z_6 = z_4 = z_3 = z_1 = c_2^2 z_7z_9 - c_1^2 z_2z_5 = 0.
\end{align*}

The three quadrics containing the union of these three varieties are
\begin{align*}
c_1^2z_3z_9-c_2^2 z_4z_5,\qquad 
c_1^2 z_2z_5-c_2^2z_7z_9,\qquad 
c_1^2z_4z_7-c_2^2 z_2z_3.
\end{align*}

\begin{proposition}
These three quadrics form a regular sequence.
\end{proposition}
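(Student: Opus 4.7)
\emph{Proof plan.} The goal is to show the ideal $(Q_1, Q_2, Q_3) = (c_1^2 z_3 z_9 - c_2^2 z_4 z_5, \ c_1^2 z_2 z_5 - c_2^2 z_7 z_9, \ c_1^2 z_4 z_7 - c_2^2 z_2 z_3)$ has height exactly $3$; the upper bound is automatic from Krull's principal ideal theorem. As in the proof of Proposition~\ref{prop:f2regseq}, I would use the Jacobian criterion: exhibit a single point $p \in V(Q_1, Q_2, Q_3)$ at which the $3 \times n$ Jacobian matrix has full row rank, which forces the codimension at $p$ to be at least $3$. Since the three quadrics involve only six of the nine variables (namely $z_2, z_3, z_4, z_5, z_7, z_9$), I may restrict to the subring $K[z_2, z_3, z_4, z_5, z_7, z_9]$.

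I would take $p = (z_2, z_3, z_4, z_5, z_7, z_9) = (0, 1, 0, 1, 1, 0)$. A quick substitution shows that $p$ lies on $V(Q_1, Q_2, Q_3)$: in each $Q_i$ evaluated at $p$, both monomials contain a zero factor. Computing partial derivatives at $p$ and selecting the $3 \times 3$ submatrix of the Jacobian whose columns are indexed by $z_2, z_4, z_9$, one obtains
\[
\begin{pmatrix} 0 & -c_2^2 & c_1^2 \\ c_1^2 & 0 & -c_2^2 \\ -c_2^2 & c_1^2 & 0 \end{pmatrix},
\]
whose determinant is $c_1^6 - c_2^6$. By hypothesis \eqref{eqn:family3disc}, this quantity is nonzero, so the Jacobian at $p$ has rank $3$, giving the desired lower bound on the height.

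The main thing to notice is that the hypothesis $c_1^6 \ne c_2^6$ is genuinely needed, not merely $c_1 c_2 \ne 0$. Indeed, when $c_1 = c_2$ (up to scaling), the three polynomials $Q_1, -Q_3, Q_2$ are precisely the three $2 \times 2$ minors of the matrix
\[
\begin{pmatrix} z_3 & z_5 & z_7 \\ z_4 & z_9 & z_2 \end{pmatrix},
\]
so $V(Q_1, Q_2, Q_3)$ degenerates to the affine cone over the Segre embedding $\bP^1 \times \bP^2 \subset \bP^5$, which has codimension $2$ rather than $3$. Thus any proof must detect precisely the non-vanishing of $c_1^6 - c_2^6$, and the determinant computed above does exactly this; this is the only subtle point in the argument, the rest being routine linear algebra.
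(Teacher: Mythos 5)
Your proof is correct and takes essentially the same approach as the paper: both exhibit a point of $V(Q_1,Q_2,Q_3)$ at which the Jacobian of the three quadrics has rank $3$, so that the codimension there is at least $3$ (hence a regular sequence, the ambient polynomial ring being Cohen--Macaulay), and in both arguments the hypothesis \eqref{eqn:family3disc} enters exactly as the nonvanishing of $c_1^6-c_2^6$. The only difference is that you evaluate at the explicit point $(z_2,z_3,z_4,z_5,z_7,z_9)=(0,1,0,1,1,0)$, whereas the paper establishes the existence of such a point on the locus $z_2=z_7=0$, $c_1^2z_3z_9-c_2^2z_4z_5=0$ by checking that the relevant $3\times 3$ minors of the Jacobian are not divisible by that quadric.
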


\begin{proof}
If we substitute $z_2=z_7=0$, then the equations for the quadrics reduces to $q = c_1^2 z_3z_9 - c_2^2z_4z_5 = 0$. Also, if we substitute $z_2=z_7$ into the Jacobian of these three quadrics and take the $3 \times 3$ minors, we get the equations
\begin{align*}
\begin{array}{llll}
z_9(c_1^4z_4z_5-c_2^4z_3z_9),&
z_5(c_1^4z_4z_5-c_2^4z_3z_9),&
z_4(c_1^4z_4z_5-c_2^4z_3z_9),&
z_3(c_1^4z_4z_5-c_2^4z_3z_9).
\end{array}
\end{align*}
If $q$ divides any of these minors, then we have $c_1^6 = c_2^6$, which violates \eqref{eqn:family3disc}. Hence we may choose a point on $q=0$ for which at least one of these minors is nonzero. This implies that the intersection of these three quadrics contains a point where the Jacobian has rank 3, and hence the quadrics cut out a codimension $3$ variety, since the rank of the Jacobian is bounded from above by the codimension.
\end{proof}

\begin{theorem} \label{thm:ers-deg}
Each $X_i$ is a ruled surface over the cubic curve $xyz = 0$.
\end{theorem}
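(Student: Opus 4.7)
The plan is to run the proof of Theorem~\ref{thm:ers} in degenerated form. Observe first that when $c_3 = c_4 = 0$ the cubic \eqref{eqn:family23cubic} becomes $(c_1^3+c_2^3)\,xyz$, which factors as the triangle $xyz=0$ since $c_1^3+c_2^3 \ne 0$ by \eqref{eqn:family3disc}. So the expected base curve of the ruling is exactly this triangle, and the goal is to produce a morphism $X_i \to \{xyz=0\}$ with $\bP^1$-fibers.

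The main step is to mimic the liaison/mapping-cone argument used for Theorem~\ref{thm:ers}. The proposition just proved is the analogue of Proposition~\ref{prop:f2regseq}: the three quadrics cutting out $X_i$ form a regular sequence in the ambient $\bP^5$, producing a complete intersection of degree $8$ in which $X_i$ is linked to a residual scheme of degree $2$. From the explicit equations above this residual scheme is visibly the union of two disjoint planes $\bP, \bP'$ (read off from the $X_{i,j}$ equations, since two of the three components lie in coordinate $\bP^3$'s whose pairwise intersection is a coordinate plane). The same Koszul/mapping cone cancellations then yield the resolution \eqref{eqn:linkagelfr} and an embedding $\sigma \colon X_i \hookrightarrow \mathbf{Fl}(4,5;6)$ realizing $X_i = p_1^{-1}(\Gamma)$ for a curve $\Gamma \subset \Gr(4,6)$ cut out by $9$ linear forms, with $p_1$-fibers all $\cong \bP^1$.

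Finally, I would identify $\Gamma$ with the triangle $xyz=0$ using the explicit decomposition $X_i = X_{i,1} \cup X_{i,2} \cup X_{i,3}$. Each $X_{i,j}$ is a smooth quadric in its ambient $\bP^3$ (by \eqref{eqn:family3disc}), hence $\cong \bP^1 \times \bP^1$, and for each pair $j \ne k$ the intersection $X_{i,j} \cap X_{i,k}$ is a line. These three lines meet pairwise in three points and together form a plane triangle, exactly the specialization of the Hesse cubic \eqref{eqn:family23cubic}. The ruling inherited from $p_1$ on each $X_{i,j}$ is the one in which the two lines $X_{i,j} \cap X_{i,k}$ (for the two $k \ne j$) appear as sections, and the three $X_{i,j}$ glue along fibers over the three corner points to give $X_i \to \{xyz=0\}$.

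The main obstacle is the Gorenstein step: ensuring that $\omega_{X_i}$ remains a line bundle so that the rank-$4$ kernel $\cK \subset \Omega_{\bP^5}(1)|_{X_i}$ in the Theorem~\ref{thm:ers} argument stays locally free. In Theorem~\ref{thm:ers} this came from linkage together with the disjointness of the residual planes. In Family~3, $\bP$ and $\bP'$ remain disjoint (they lie in complementary coordinate subspaces), so the liaison argument goes through and $X_i$ is Gorenstein despite being reducible. After this, the rest of the proof of Theorem~\ref{thm:ers} specializes verbatim, with the one adjustment that the base curve $\Gamma$ is now a reducible nodal cubic rather than a smooth genus $1$ curve.
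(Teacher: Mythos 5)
Your overall strategy coincides with the paper's own proof, which consists of the single sentence that it is the same as the proof of Theorem~\ref{thm:ers}: the regular sequence from the preceding proposition, the linkage to the two disjoint residual planes $\bP = \{z_3=z_5=z_7=0\}$ and $\bP' = \{z_2=z_4=z_9=0\}$ inside the $\bP^5$ given by $z_1=z_6=z_8=0$, the mapping-cone resolution \eqref{eqn:linkagelfr}, the embedding into $\mathbf{Fl}(4,5;6)$, and the Gorenstein step all carry over exactly as you say. The problem is your last paragraph, where you identify the base curve $\Gamma$: the claim that the three lines $X_{i,j} \cap X_{i,k}$ ``meet pairwise in three points and together form a plane triangle'' is false. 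For $i=1$, the three pairwise intersections are the coordinate lines on which only $(z_3,z_4)$, only $(z_5,z_9)$, and only $(z_2,z_7)$ are nonzero, respectively; no two of these share a nonzero coordinate, so they are pairwise \emph{disjoint}, and together they span all of $\bP^5$, so they are certainly not coplanar. Your description of the ruling is correspondingly inconsistent: you declare these lines to be sections of the ruling on each component, and in the same sentence say the components ``glue along fibers'' --- but the gluing locus \emph{is} exactly these lines, so they cannot be both.

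The correct picture, i.e., what the proof of Theorem~\ref{thm:ers} actually specializes to, is the opposite one. On each component $X_{1,j} \cong \bP^1 \times \bP^1$ the two lines $X_{1,j} \cap X_{1,k}$ ($k \ne j$) are disjoint, hence belong to a single ruling, and that ruling is the one by $p_1$-fibers: these three lines are the fibers over the three nodes of the base curve, which is precisely why $\Gamma$ is a cycle of three $\bP^1$'s. The triangle itself appears not as the union of the intersection lines but as the section $C = X_1 \cap \bP$, cut out inside $\bP \cong \bP^2$ (coordinates $z_2, z_4, z_9$) by $z_2 z_4 z_9 = 0$; this is the degeneration of the Hesse cubic \eqref{eqn:family23cubic} that you computed at the start, and the bidegree argument at the end of the proof of Theorem~\ref{thm:ers} shows that $p_1 \colon C \to \Gamma$ is an isomorphism, identifying $\Gamma$ with the triangle. (On each $X_{1,j}$, the two lines $X_{1,j} \cap \bP$ and $X_{1,j} \cap \bP'$ form the opposite ruling, transverse to the fibers, each meeting each intersection line $X_{1,j}\cap X_{1,k}$ in one point.) So your final paragraph should be replaced by this identification via the residual planes; everything before it is a correct adaptation of the paper's argument.
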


\begin{proof}
The proof is the same as the proof of Theorem~\ref{thm:ers}.
\end{proof}

\begin{remark}
Any two $X_i$, $X_j$ either intersect in a linear $\bP^1$ or in a point. For a fixed component, each case happens $4$ times. Hence we can form a $4$-regular graph on $9$ vertices. Let $\Gamma$ be its automorphism group. Then $\# \Gamma = 72$ and there is a split short exact sequence
\[
1 \to \bZ/3 \times \bZ/3 \to \Gamma \to {\rm Di}_4 \to 1
\]
where ${\rm Di}_4$ is the dihedral group of the square. The wreath product $G(6,1,2) = S_2 \ltimes (\bZ/6)^2$ also has this property, but the groups are not isomorphic. According to {\tt GAP} small groups notation \cite{gap}, $\Gamma$ is group number $(72,40)$ and $G(6,1,2)$ is group number $(72, 30)$. 

To construct this semidirect product, note that $\bZ/3 \times \bZ/3$ is the group of translations of $\bA^2_{\bF_3}$ and that ${\rm Di}_4$ is the stabilizer of two points in $\bP^1_{\bF_3}$. More precisely, the data of the graph can be reinterpreted by picking two axes of direction in $\bA^2_{\bF_3}$, which correspond to two points in $\bP^1_{\bF_3}$.
\end{remark}

\subsection{Families 4 and 5: Plane cubics} \label{sec:planecubics}

We first consider Family 4. This is the flat defined by $c_4 = c_2 + c_3 = 0$. Hence we may parametrize this as the family 
\[
c_1 h_1 + c_2(h_2 - h_3).
\]
This defines the point $[0:0:1:-1:0]$ on the Burkhardt quartic $\cB$, and is one of its singular points. We remark that the exceptional divisor of the blowup of $\cB$ at this point (and every singular point) has an interpretation as the moduli of products of plane cubics \cite[Theorem 3.1]{geer}. The condition that this point lies outside Family 5 is $c_1(c_1^3 + 8c_2^3) \ne 0$ and the condition that this point lies outside Family 6 is $c_2(c_1^3 - c_2^3) \ne 0$. Hence we will assume that
\begin{align} \label{eqn:f4disc}
c_1c_2(c_1^3-c_2^3)(c_1^3+8c_2^3) \ne 0.
\end{align}

The Jacobian ideal \eqref{eqn:jacobian} is generated by the $9$ quadrics which are $c_2(c_1^3 - c_2^3)$ times the $2 \times 2$ minors of the matrix 
\begin{align} \label{eqn:f4segre}
\begin{pmatrix} 
z_1 & z_2 & z_3 \\ z_6 & z_4 & z_5 \\ z_8 & z_9 & z_7 
\end{pmatrix}.
\end{align}
So its zero locus is $\bP^2 \times \bP^2$ and has dimension $4$.

The Pfaffian locus has dimension $3$ and degree $9$. It is defined by the $9$ quadrics of the Jacobian ideal above and $10$ additional cubics 
\begin{align*} 
\begin{array}{ll}
c_1c_2^2(z_1^3 + z_6^3 + z_8^3)-(c_1^3+2c_2^3)z_1z_6z_8, &
c_1c_2^2(z_2^3 + z_4^3 + z_9^3)-(c_1^3+2c_2^3)z_2z_4z_9, \\ 
c_1c_2^2(z_3^3 + z_5^3 + z_7^3) - (c_1^3+2c_2^3)z_3z_5z_7,&
c_1c_2^2(z_1z_2z_3 + z_4z_5z_6 + z_7z_8z_9)-(c_1^3+2c_2^3)z_3z_6z_9,\\ 
c_1c_2^2(z_2z_3^2 + z_4z_5^2 + z_9z_7^2) -(c_1^3+2c_2^3)z_3z_5z_9,&
c_1c_2^2(z_1z_3^2 + z_6z_5^2 + z_8z_7^2) -(c_1^3+2c_2^3)z_3z_5z_8,\\
c_1c_2^2(z_3z_2^2 + z_5z_4^2 + z_7z_9^2)-(c_1^3+2c_2^3)z_2z_5z_9,& 
c_1c_2^2(z_1z_2^2 + z_6z_4^2 + z_8z_9^2)-(c_1^3+2c_2^3)z_2z_6z_9,\\ 
c_1c_2^2(z_3z_1^2 + z_5z_6^2 + z_7z_8^2)-(c_1^3+2c_2^3)z_3z_6z_8,& 
c_1c_2^2(z_2z_1^2 + z_4z_6^2 + z_9z_8^2)-(c_1^3+2c_2^3)z_1z_6z_9.
\end{array}
\end{align*}

Let $V$, $W$ be two $3$-dimensional vector spaces with coordinate functions $v_1,v_2,v_3$ and $w_1,w_2,w_3$, and identify the matrix in \eqref{eqn:f4segre} with $\begin{pmatrix} v_1 & v_2 & v_3 \end{pmatrix}^T \begin{pmatrix} w_1 & w_2 & w_3 \end{pmatrix}$. Let $C$ be the curve 
\[
c_1c_2^2(v_1^3 + v_2^3 + v_3^3) - (c_1^3 + 2c_2^3)v_1v_2v_3 = 0
\]
in $\bP(V)$. Then our variety above is the Segre embedding of $C \times \bP(W)$. 

\begin{proposition} \label{prop:f4cubicsmooth}
$C$ is smooth if \eqref{eqn:f4disc} holds.
\end{proposition}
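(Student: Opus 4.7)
The plan is to reduce the claim to the Hesse-normal-form smoothness criterion already invoked in Proposition~\ref{prop:f2cubicsmooth}: the cubic $\lambda(v_1^3 + v_2^3 + v_3^3) + \mu v_1 v_2 v_3 = 0$ is smooth if and only if $\lambda(\mu^3 + 27\lambda^3) \neq 0$. In our situation we set $\lambda = c_1 c_2^2$ and $\mu = -(c_1^3 + 2c_2^3)$, so everything comes down to checking when this product is nonzero and matching the answer with the hypothesis \eqref{eqn:f4disc}.

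First I would note that $\lambda = c_1 c_2^2$ is nonzero precisely when $c_1 c_2 \neq 0$, which is part of \eqref{eqn:f4disc}. The substantive step is to evaluate the expression $\mu^3 + 27\lambda^3 = -(c_1^3 + 2c_2^3)^3 + 27 c_1^3 c_2^6$. Setting $a = c_1^3$ and $b = c_2^3$ to declutter notation, this becomes $-(a+2b)^3 + 27 ab^2$. Expanding gives $-a^3 - 6a^2 b + 15ab^2 - 8 b^3$, which one checks vanishes at $a = b$ and thus admits $(a-b)$ as a factor. Polynomial division then yields the factorization
\[
-(a+2b)^3 + 27ab^2 \;=\; -(a-b)^2(a+8b),
\]
so in terms of the original variables
\[
\mu^3 + 27 \lambda^3 \;=\; -(c_1^3 - c_2^3)^2 (c_1^3 + 8 c_2^3).
\]

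Combining the two factors,
\[
\lambda(\mu^3 + 27\lambda^3) \;=\; -c_1 c_2^2 (c_1^3 - c_2^3)^2 (c_1^3 + 8 c_2^3),
\]
which is nonzero exactly when $c_1 c_2 (c_1^3 - c_2^3)(c_1^3 + 8 c_2^3) \neq 0$, i.e., under the hypothesis \eqref{eqn:f4disc}. This completes the argument. There is no real obstacle here: the proof is an exercise in verifying a polynomial identity and invoking the standard Hesse criterion; the main thing worth highlighting is that the discriminant $\lambda(\mu^3+27\lambda^3)$ matches the product $c_1 c_2 (c_1^3-c_2^3)(c_1^3+8c_2^3)$ up to a square, which is why the hypothesis \eqref{eqn:f4disc} is stated in exactly this form.
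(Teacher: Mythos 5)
Your proof is correct and follows essentially the same route as the paper, which simply invokes the Hesse-pencil smoothness criterion $\lambda(\mu^3+27\lambda^3)\neq 0$ (as in Proposition~\ref{prop:f2cubicsmooth}) with $\lambda = c_1c_2^2$, $\mu = -(c_1^3+2c_2^3)$. Your explicit factorization $\mu^3+27\lambda^3 = -(c_1^3-c_2^3)^2(c_1^3+8c_2^3)$, which the paper leaves implicit, is verified correct and shows the criterion coincides exactly with \eqref{eqn:f4disc}.
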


\begin{proof}
This is exactly the same as Proposition~\ref{prop:f2cubicsmooth}.  
\end{proof}

Family 5 is the limiting case of Family 4 where $c_1 \to 0$. In particular, we consider the vector $c_2(h_2-h_3)$ where $c_2 \ne 0$. The curve $C$ has degenerated to the triangle $xyz=0$, so the Pfaffian locus now has $3$ components.

\section{Genus 2 analogue of Shioda's surface} \label{sec:shioda}

\subsection{The Coble--Shioda variety}

Define $\CS^\circ$ as the union of all smooth Abelian surfaces arising from the construction in \S\ref{ss:coblecubics}, i.e., the set of all points $\bz$ that satisfy the equations \eqref{eqn:jacobian} for some $(c_1, \dots, c_4)$ with $\Delta(\bc) \ne 0$, as defined in \eqref{eqn:discw39}. We denote the closure by $\CS = \ol{\CS^\circ}$, and call it the {\bf Coble--Shioda variety}. 

Our goal is to give explicit determinantal equations for $\CS$, which can be found in Theorem~\ref{thm:cobleshioda}. We will see that these determinantal equations do not generate a prime ideal. In particular, we present $4$ extra equations and conjecture that they suffice to get the whole ideal (Conjecture~\ref{conj:CSradical}). In \S\ref{ss:CSminors}, we analyze the zero sets of the lower order minors from Theorem~\ref{thm:cobleshioda}.

In order to study the variety $\CS$, it is useful to have alternative descriptions. Define $\CS_{\Jac}$ to be the closure of the union of the loci defined by the Jacobian ideal \eqref{eqn:jacobian} as we vary over all $\bc \in \bP^3 \setminus \text{Family 6}$. Also define $\CS_{\Pf}$ to be the closure of the union of the loci defined by the $6 \times 6$ Pfaffians of the matrix $\Phi_\bc(z)$ from \eqref{eqn:phimatrix} as we vary over all $\bc \in \bP^3 \setminus \text{Family 6}$.

\begin{theorem} \label{thm:CSdefn}
$\CS = \CS_{\Pf} = \CS_{\Jac}$.
\end{theorem}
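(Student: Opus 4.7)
The plan is to prove $\CS \subseteq \CS_{\Pf} \subseteq \CS_{\Jac}$ from a straightforward comparison of defining equations, and then to close the loop with $\CS_{\Jac} \subseteq \CS$ by a family-by-family inspection using the analysis of \S\ref{sec:w39degenerations}.

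The first chain is immediate. For $\bc$ with $\Delta(\bc) \ne 0$, the $6 \times 6$ Pfaffian locus equals the smooth Abelian surface $X_\bc$ (\S\ref{sec:coblecubics}), so $\CS^\circ \subseteq \CS_{\Pf}$; since $\CS_{\Pf}$ is closed, $\CS = \overline{\CS^\circ} \subseteq \CS_{\Pf}$. The chain-rule remark in \S\ref{sec:coblecubics}, that the partial derivatives of the Coble cubic vanish on the $6 \times 6$ Pfaffian locus, gives $\CS_{\Pf} \subseteq \CS_{\Jac}$. Note that $\CS$ is irreducible of dimension $5$, being the closure of the image of the irreducible family of smooth Abelian surfaces over $\{\Delta \ne 0\} \subseteq \bP^3$.

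For $\CS_{\Jac} \subseteq \CS$, it suffices to show that for every $\bc \in \bP^3 \setminus \text{Family 6}$, each irreducible component of $V(\text{Jac}(\bc))$ lies in $\CS$. Family 1 is immediate from Remark~\ref{rmk:33sat}. For Families 2 and 3, the Jacobian locus decomposes as the Pfaffian locus---a degree-$18$ surface with the same Hilbert polynomial $9n^2$ as any $(3,3)$-polarized Abelian surface---together with several fixed $\bP^2$'s listed in \S\ref{sec:w39family2}--\S\ref{ss:w39family3}. Since the Pfaffian locus is cut out by the specialization at $\bc$ of the same $6 \times 6$ Pfaffians that define $X_{\bc_t}$ for nearby generic $\bc_t$, it is a flat limit of such Abelian surfaces, hence lies in $\CS$. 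One checks by direct inspection that each of the extra $\bP^2$'s is contained in the Segre variety $\bP^2 \times \bP^2$ that will arise for Family 4 (the rank $\le 1$ locus of \eqref{eqn:f4segre}), so they lie in $\CS$ once Family 4 is handled.

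For Families 4 and 5, the Jacobian locus equals the fixed $4$-dimensional Segre variety $\bP^2 \times \bP^2 \subseteq \bP^8$, independent of the particular $\bc$ (\S\ref{sec:planecubics}). So it suffices to prove $\bP^2 \times \bP^2 \subseteq \CS$. As $\bc$ varies over Family 4, the Pfaffian locus is the $3$-dimensional subvariety $C_\bc \times \bP^2$, where $C_\bc$ is a Hesse cubic in the first factor, and these Hesse cubics form a pencil sweeping out all of the first $\bP^2$. Hence it is enough to establish $C_\bc \times \bP^2 \subseteq \CS$ for each $\bc \in \text{Family 4}$; taking the union then recovers $\bP^2 \times \bP^2$. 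To prove this containment I would consider many $1$-parameter approaches $\bc_t \to \bc$ with $\Delta(\bc_t) \ne 0$: a Hilbert-polynomial count shows that each flat limit of the Abelian surfaces $X_{\bc_t}$ must be a degree-$18$ surface inside $C_\bc \times \bP^2$, and the natural candidate is a product $C_\bc \times C'$ of two plane cubics (total degree $3 \cdot 3 \cdot 2 = 18$). Varying the approach direction should make $C'$ trace out a second pencil of cubics whose union is all of the second $\bP^2$.

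The main obstacle is making these ``pencil covers $\bP^2$'' statements precise: identifying the flat limit of $X_{\bc_t}$ along each approach $\bc_t \to \bc_0 \in \text{Family 4}$ as an honest product of plane cubics, and checking that the second-factor cubics $C'$ arising from different approaches sweep out all of the second $\bP^2$. I would organize this by introducing the incidence variety
\[
\cI = \overline{\{(\bc,\bz) \in (\bP^3 \setminus V(\Delta)) \times \bP^8 \mid \bz \in X_\bc\}} \subseteq \bP^3 \times \bP^8
\]
and computing the fiber of $\cI \to \bP^3$ over each $\bc_0 \in \bP^3 \setminus \text{Family 6}$ from the explicit equations in \S\ref{sec:w39family2}--\S\ref{sec:planecubics}; the target is to verify that this fiber equals the full Jacobian locus at $\bc_0$, which will give $\CS_{\Jac} = p_2(\cI) \subseteq \CS$ and complete the proof.
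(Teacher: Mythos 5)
Your two easy containments $\CS \subseteq \CS_{\Pf} \subseteq \CS_{\Jac}$ are correct, and your reduction of the remaining inclusion $\CS_{\Jac} \subseteq \CS$ to the single claim that each \emph{Pfaffian} locus over Families 2--5 lies in $\CS$ (the extra $\bP^2$'s of Families 2--3 and the Segre $\bP^2 \times \bP^2$ of Families 4--5 being swept out by Hesse pencils as $\bc$ moves inside its family) is exactly the mechanism of the paper's Proposition~\ref{prop:CS=CSJac}. The genuine gap is that this single claim is never proved. For Families 2 and 3, ``cut out by the specialization of the same Pfaffians, hence a flat limit'' is not an argument: a flat limit of the $X_{\bc_t}$ is only \emph{contained} in the scheme cut out by the specialized Pfaffians, and a dimension-and-degree count does not force its support to cover all three degree-$6$ components (a limit supported on one component with multiplicity $3$ is not excluded without an extra input such as Heisenberg invariance). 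For Families 4 and 5 you explicitly leave the containment $C_\bc \times \bP^2 \subseteq \CS$ as the main obstacle, and making your ``products of plane cubics'' heuristic rigorous would essentially require van der Geer's description of these degenerations \cite[Theorem 3.1]{geer}. The paper closes exactly this gap with one uniform tool missing from your proposal: the generic perfection theorem \cite[Theorem 3.5]{bv}. Applied to $Z = \{(\bc,z) \in (\bP^3 \setminus \text{Family 6}) \times \bP^8 \mid \rank \Phi_\bc(z) < 6\}$, it forces every irreducible component of $Z$ to have dimension at least $5$, while the analysis of \S\ref{sec:w39degenerations} shows the part of $Z$ lying over Families 2--5 has dimension at most $4$; hence $Z = \ol{Z^\circ}$ (Proposition~\ref{prop:CS=CSpf}), which is precisely the statement that every point of every Pfaffian locus is a limit of points on smooth Abelian surfaces --- the statement that all of your flat-limit claims need.

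Separately, the ``target'' in your last sentence is false, not merely unproven: the fiber of $\cI = \ol{Z^\circ}$ over a point $\bc_0$ of Family 4 is contained in the corresponding fiber of $Z$, i.e., in the $3$-dimensional Pfaffian locus $C_{\bc_0} \times \bP^2$, so it can never equal the $4$-dimensional Jacobian locus $\bP^2 \times \bP^2$ (likewise, over Families 2 and 3 the fibers of $\cI$ cannot contain the three extra planes). The inclusion $\CS_{\Jac} \subseteq \CS$ is \emph{not} a fiberwise statement over $\bP^3 \setminus \text{Family 6}$; it holds only after taking the union of fibers over each positive-dimensional family, as your own earlier reduction correctly does. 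So the final step of your plan should be discarded in favor of that reduction together with the dimension bound above.
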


The proof of this theorem will follow from the next three results.

\begin{proposition} \label{prop:CSirred}
$\CS$ is an irreducible variety.
\end{proposition}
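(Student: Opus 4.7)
The plan is to exhibit $\CS^\circ$ as the image of an irreducible total space under a morphism, from which irreducibility of $\CS = \overline{\CS^\circ}$ is immediate.

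Let $S = \bP^3 \setminus \{\Delta = 0\}$, with coordinates $[c_1:c_2:c_3:c_4]$. By Theorem~\ref{thm:summary}, the subvariety $\cX \subset \bP^8_S$ cut out by the $6 \times 6$ Pfaffians of $\Phi_\bc(z)$ is a flat family of smooth Abelian surfaces over $S$. The base $S$ is open in $\bP^3$ and hence irreducible, and every fiber $X_\bc$ is an Abelian surface, which is smooth and irreducible. Applying the standard criterion that a flat family with irreducible equidimensional fibers over an irreducible base is itself irreducible (as was used, for instance, in the proof for $S(5)^\circ$ via \cite[Exercise 14.3]{eisenbud}), we conclude that $\cX$ is an irreducible variety.

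Next, $\CS^\circ$ is by definition the set of $\bz \in \bP^8$ such that $(\bc, \bz) \in \cX$ for some $\bc \in S$, i.e., it is the image of $\cX$ under the second projection $\pi \colon \bP^3_S \times \bP^8 \to \bP^8$ restricted to $\cX$. The image of an irreducible variety under a morphism is irreducible, so $\CS^\circ$ is irreducible. Taking Zariski closure preserves irreducibility, so $\CS = \overline{\CS^\circ}$ is irreducible, which is what we wanted.

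The only potential subtlety is confirming that the family $\cX \to S$ really is a flat family with smooth Abelian fibers over all of $S$; this has already been arranged by restricting to the complement of the discriminant $\Delta$ (Proposition~\ref{prop:w39disc}), and flatness then follows from the equidimensionality of the fibers together with their Cohen--Macaulayness, so no further work is needed.
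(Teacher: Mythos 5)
Your proposal is correct and is essentially the paper's own argument: the incidence variety $\cX \subset \bP^8_S$ you use is exactly the paper's $Z^\circ = \{(\bc,z) \mid \rank \Phi_\bc(z) < 6,\ \Delta(\bc) \ne 0\}$, irreducibility of the total space follows in both cases from the same criterion (irreducible base, irreducible fibers of constant dimension, via \cite[Exercise 14.3]{eisenbud}), and $\CS^\circ$ is then the image under the projection to $\bP^8$. The only cosmetic difference is your closing appeal to flatness, which is not actually needed for the fiber-dimension irreducibility criterion.
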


\begin{proof}
It is enough to show that $\CS^\circ$ is irreducible. Consider the set
\[
Z^\circ = \{(\bc,z) \in \bP^3 \times \bP^8 \mid \rank \Phi_\bc(z) < 6,\ \Delta(\bc) \ne 0\}.
\]
The first projection $\pi_1$ gives a map from $Z^\circ$ to the complement of $\Delta(\bc) = 0$, which is irreducible. The fibers of $\pi_1$ are smooth Abelian surfaces, and in particular are irreducible and of constant dimension $2$. We conclude that $Z^\circ$ is irreducible \cite[Exercise 14.3]{eisenbud}. Since $\pi_2(Z^\circ) = \CS^\circ$, we get that $\CS^\circ$ is irreducible. 
\end{proof}

\begin{proposition} \label{prop:CS=CSpf}
$\CS = \CS_{\Pf}$.
\end{proposition}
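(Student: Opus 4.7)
The plan is to establish each inclusion separately. The forward inclusion $\CS \subseteq \CS_{\Pf}$ is immediate: for any $\bc$ with $\Delta(\bc) \ne 0$ the Jacobian equations \eqref{eqn:jacobian} and the $6 \times 6$ Pfaffians of $\Phi_\bc(z)$ both cut out the smooth Abelian surface $X_\bc$ set-theoretically (as discussed in \S\ref{sec:coblecubics}); hence $\CS^\circ \subseteq \bigcup_{\bc \notin \text{Family } 6} V(\Pf_6(\Phi_\bc))$, and passing to closures gives $\CS \subseteq \CS_{\Pf}$.

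For the reverse, introduce the incidence variety $Z := \{(\bc, z) \in \bP^3 \times \bP^8 \mid \rank \Phi_\bc(z) < 6\}$ and its irreducible component $Z_1 := \overline{Z^\circ}$. The argument of Proposition~\ref{prop:CSirred} shows that $\dim Z_1 = 5$ and $\pi_2(Z_1) = \CS$. Since $\CS$ is closed, it suffices to check that the fiber $Z_\bc$ is contained in the fiber $(Z_1)_\bc$ for every $\bc \in \bP^3$ outside Family~$6$. Upper semi-continuity of fiber dimension applied to the dominant proper morphism $\pi_1 \colon Z_1 \to \bP^3$ (of dimensions $5$ and $3$) gives $\dim(Z_1)_\bc \ge 2$ for every $\bc \in \bP^3$.

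The verification proceeds by case analysis along Table~\ref{tab:G32flats}. The case $\bc \in$ Family~$1$ is tautological. For $\bc$ in Families~$2$ or $3$ the Pfaffian fiber has dimension $2$ and, by Theorems~\ref{thm:ers} and \ref{thm:ers-deg}, decomposes into irreducible components transitively permuted by the stabilizer of $\bc$ in $\rG_{32}$. Since $Z_1$ is $\rG_{32}$-equivariant and $(Z_1)_\bc$ is a stabilizer-invariant union of components of dimension at least $2$, it must equal the entire Pfaffian fiber.

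The main obstacle is the case $\bc$ in Families~$4$ or $5$, where the Pfaffian fiber is the irreducible $3$-dimensional Segre variety $C_\bc \times \bP^2$ and we must show $\dim(Z_1)_\bc = 3$ rather than $2$. The plan is to argue by contradiction: were $(Z_1)_\bc$ only $2$-dimensional, then locally near $\bc$ the morphism $\pi_1$ would be equi-dimensional of relative dimension $2$, and a miracle-flatness-type argument (combining regularity of $\bP^3$ with the Cohen--Macaulayness of Pfaffian degeneracy loci of the appropriate type) would force the Hilbert polynomial of $(Z_1)_\bc$ to match that of a generic Abelian surface, making $(Z_1)_\bc$ an $\rG_\bc$-invariant $2$-dimensional subscheme of the Segre $C_\bc \times \bP^2$ with the Hilbert polynomial of a $(3,3)$-polarized Abelian surface. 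An explicit analysis of effective divisor classes on the Segre via intersection theory, combined with the $\rG_\bc$-equivariance, is then expected to rule out the existence of such a subscheme and yield the desired contradiction. Once $\dim(Z_1)_\bc = 3$ is secured, irreducibility of the Segre gives $(Z_1)_\bc = C_\bc \times \bP^2$, and the reverse inclusion follows.
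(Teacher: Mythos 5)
Your reduction to the fiberwise statement $Z_\bc \subseteq (Z_1)_\bc$ for $\bc$ outside Family 6 is sound, and your treatment of Families 1--3 works: the fiber-dimension theorem gives $\dim (Z_1)_\bc \ge 2$, the Pfaffian fibers there are pure of dimension $2$, and their components are permuted transitively by the Heisenberg group $H_{3,2}$, which acts on every fiber since elements of $\fh$ are $H_{3,2}$-invariant (this, rather than the stabilizer of $\bc$ in $\rG_{32}$, whose elements only act on $\bP^8$ after a choice of lift, is the clean way to phrase the symmetry). The genuine gap is Families 4 and 5, where your argument is explicitly conjectural, and the route you sketch cannot be completed. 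First, miracle flatness requires Cohen--Macaulayness of the total space; the only available source for that is the generic perfection theorem, which needs the degeneracy locus to have the expected codimension $6$ everywhere --- but that dimension statement is exactly what is at stake, and once you have it you are done with no flatness at all (see below). Second, and fatally, the contradiction you hope to extract does not exist: there \emph{are} $H_{3,2}$-invariant $2$-dimensional subschemes of $C_\bc \times \bP^2$ with the Hilbert polynomial $9t^2$ of a $(3,3)$-polarized Abelian surface, namely the products $C_\bc \times C'$ with $C'$ any member of the Hesse pencil in the second factor (the Hesse pencil is precisely the space of Heisenberg-invariant plane cubics). These are the decomposable polarized Abelian surfaces that the paper attaches to the singular points of the Burkhardt quartic --- and Family 4 maps to such a singular point --- so they are honest flat limits of $X_{\bc'}$ as $\bc' \to \bc$. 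No intersection-theoretic or equivariance argument can rule them out. What is actually true is that different directions of approach to $\bc$ produce \emph{different} limits $C_\bc \times C'$, and their union sweeps out the whole threefold $C_\bc \times \bP^2$; that is why the closure has $3$-dimensional fiber there, but it is a different argument from yours and you would have to prove it.

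The idea you are missing, and which is the paper's entire proof, is to apply generic perfection to $Z$ itself rather than working inside $Z_1$. Viewing $(\bP^3 \setminus \text{Family 6}) \times \bP^8$ as a relative $\bP^8$, the locus $Z$ is the Pfaffian degeneracy locus of a section of $\bigwedge^2 \cQ \otimes \cO(1)$, so by \cite[Theorem 3.5]{bv} \emph{every} irreducible component of $Z$ has codimension at most $6$, i.e., dimension at least $5$. The preimages in $Z$ of Families 2, 3, 4, 5 have dimensions $4$, $3$, $4$, $3$ (dimension of the projectivized flat plus the constant fiber dimension computed in \S\ref{sec:w39degenerations}), so none of them can contain a component of $Z$. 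Hence every component of $Z$ meets $Z^\circ$ densely, giving $Z = \overline{Z^\circ} = Z_1$; this yields your fiberwise containment simultaneously for all families --- including the problematic Families 4 and 5 --- with no case analysis, no flatness, and no classification of invariant subschemes.
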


\begin{proof}
Define
\[
Z = \{(\bc,z) \in (\bP^3 \setminus \text{Family 6}) \times \bP^8 \mid \rank \Phi_\bc(z) < 6\}.
\]
If we view $(\bP^3 \setminus \text{Family 6}) \times \bP^8$ as a relative $\bP^8$ over the base $\bP^3 \setminus \text{Family 6}$, then we have a relative hyperplane bundle $\cQ$ and $Z$ is the Pfaffian degeneracy locus of a section of $\bigwedge^2 \cQ \otimes \cO(1)$, and hence by the generic perfection theorem \cite[Theorem 3.5]{bv} (applied to the coordinate ring of the ideal of $6 \times 6$ Pfaffians of a generic $8 \times 8$ skew-symmetric matrix), each of its irreducible components has codimension at most $6$, i.e., dimension at least $5$. By our analysis in \S\ref{sec:w39degenerations}, we see that the preimages of Family 2, 3, 4, and 5 have dimensions $4$, $3$, $4$, and $3$, respectively (we are just adding the dimension of the family to the dimension of the fiber, which is constant), so they cannot be irreducible components. 
This means that $Z$ is the closure of $Z^\circ$ from the proof of Proposition~\ref{prop:CSirred}. We have already seen that $Z^\circ$ is irreducible, so $Z$ is irreducible, and since $\CS_{\Pf}$ is the closure of $\pi_2(Z)$, it is also irreducible. Finally, $\CS$ is irreducible by Proposition~\ref{prop:CSirred}, and we have $\CS \subseteq \CS_{\Pf}$ and both of them have the same dimension, so they are equal.
\end{proof}

\begin{proposition} \label{prop:CS=CSJac}
$\CS_{\Pf} = \CS_{\Jac}$.
\end{proposition}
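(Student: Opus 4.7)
The plan is to prove both inclusions separately.

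The easy direction $\CS_{\Pf}\subseteq\CS_{\Jac}$ comes from the chain rule. Since the $i$-th principal $8\times 8$ Pfaffian of $\Phi_{\bc}(z)$ factors as $z_i$ times the Coble cubic $F$, we have $z_i\partial_j F = \partial_j(\text{Pfaff}_i)$ for $j\ne i$, and the Pfaffian derivative rule writes $\partial_j(\text{Pfaff}_i)$ as a linear combination of $6\times 6$ Pfaffians of $\Phi_{\bc}(z)$. Thus $z_i\partial_j F$ vanishes on the Pfaffian locus $V(\text{Pf}_{\bc})$ for every $i$, which forces $\partial_j F$ itself to vanish there (the $z_i$ have no common zero in $\bP^8$). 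Hence $V(\text{Pf}_{\bc})\subseteq V(\text{Jac}_{\bc})$ for every $\bc\notin\text{Family 6}$, and $\CS_{\Pf}\subseteq\CS_{\Jac}$ after taking closures.

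For the reverse inclusion, since $\CS_{\Pf}$ is closed it suffices to check that $V(\text{Jac}_{\bc})\subseteq\CS_{\Pf}$ for every $\bc\in\bP^3\setminus\text{Family 6}$. When $\Delta(\bc)\ne 0$, Remark~\ref{rmk:33sat} gives $V(\text{Jac}_{\bc})=V(\text{Pf}_{\bc})=X_{\bc}$ and there is nothing to check. For $\bc$ in Families 2--5 the analyses in Section~\ref{sec:w39degenerations} decompose $V(\text{Jac}_{\bc})=V(\text{Pf}_{\bc})\cup E_{\bc}$, where $E_{\bc}$ is a union of linear $\bP^2$'s (Families 2 and 3) or the Segre embedding of $\bP^2\times\bP^2$ in $\bP^8$ (Families 4 and 5). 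Since $V(\text{Pf}_{\bc})\subseteq\CS_{\Pf}$ by construction, only $E_{\bc}$ needs attention.

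The strategy is to sweep $E_{\bc}$ by Pfaffian loci as $\bc'$ varies within a related family. In Family 4, $V(\text{Pf}_{\bc'})=C_{\bc'}\times\bP^2$ sits inside $\bP^2\times\bP^2=E_{\bc}$, and as $[c_1:c_2]$ ranges along the Family 4 line the Hesse parameter $[c_1c_2^2:c_1^3+2c_2^3]$ is a surjective (degree 3) morphism $\bP^1\to\bP^1$, so $C_{\bc'}$ traverses the full Hesse pencil, which covers $\bP^2$; hence $\bigcup_{\bc'}V(\text{Pf}_{\bc'})=\bP^2\times\bP^2$. Family 5 is the triangular specialization, treated identically. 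For Families 2 and 3 the ruled-surface components in \eqref{eqn:f2X1eqns} meet each extra $\bP^2$ in the Hesse cubic \eqref{eqn:family23cubic}, so varying $\bc'$ within Family 2 sweeps the Hesse pencil across the $\bP^2$ by the same surjectivity; extras specific to Family 3 are handled analogously by varying $\bc'$ within Family 3 itself, exploiting the degeneration of the Pfaffian quadrics $c_2^2 xy - c_1^2 zw$ to pairs of $\bP^2$'s at the Family 5 boundary points of the Family 3 line. The main obstacle is this case-by-case bookkeeping; the underlying geometric input --- surjectivity of the parametrization of the Hesse pencil by $\bc'$ --- is a routine degree count for morphisms of $\bP^1$.
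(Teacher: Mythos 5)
Your overall strategy is the same as the paper's: the chain-rule inclusion $\CS_{\Pf}\subseteq\CS_{\Jac}$, then a family-by-family comparison of Jacobian and Pfaffian loci in which the discrepancy is swept out by Hesse pencils. Your treatment of the generic case, of Family 2, and of Families 4 and 5 matches the paper's proof. However, your step for the ``extras specific to Family 3'' contains a genuine gap.

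The mechanism you propose for those three planes --- degeneration of the Pfaffian quadrics $c_2^2xy-c_1^2zw$ into pairs of planes at the Family 5 boundary points of the Family 3 line --- fails on two counts. First, such a quadric drops rank exactly when $c_1c_2=0$, which on the line $\{c_3=c_4=0\}$ is the Family 6 locus, not Family 5: at the Family 5 points one has $c_1^6=c_2^6$ with $c_1c_2\ne 0$ by \eqref{eqn:family3disc}, and the quadrics there are still smooth. Second, and more fundamentally, each of the nine quadric components of the Family 3 Pfaffian locus lies in a fixed $\bP^3$ that does not move with $\bc$, so the closure of the sweep along the whole line (even through the Family 6 endpoints) is contained in the union of these nine $\bP^3$'s; their coordinate sets are $\{3,4,5,9\}$, $\{2,3,4,7\}$, $\{2,5,7,9\}$ (listed in \S\ref{ss:w39family3}) together with their Heisenberg translates $\{1,2,6,9\}$, $\{1,4,8,9\}$, $\{2,4,6,8\}$, $\{3,6,7,8\}$, $\{1,5,6,7\}$, $\{1,3,5,8\}$. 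None of these nine sets contains any of the six extra-plane triples $\{3,5,7\}$, $\{2,4,9\}$, $\{1,6,8\}$, $\{1,5,9\}$, $\{2,6,7\}$, $\{3,4,8\}$, so by irreducibility no extra plane lies in this union. Hence sweeping the two-dimensional Pfaffian loci within Family 3 can never reach the planes in question.

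The repair is the one implicit in the paper's phrase ``the union over all Pfaffian loci in Families 2 and 3'': the union defining $\CS_{\Pf}$ runs over \emph{all} $\bc$ outside Family 6, not just the representative flats. The Family 3 flat $\{c_3=c_4=0\}$ lies in two Family 2 hyperplanes, $\{c_4=0\}$ and $\{c_3=0\}$; the three planes you call specific to Family 3 (coordinate triples $\{1,5,9\}$, $\{2,6,7\}$, $\{3,4,8\}$, i.e.\ the monomials of $h_3$) are exactly the extra planes of the conjugate flat $\{c_3=0\}$, and your own Family 2 argument, transported by the $\rG_{32}$-action, fills them with Hesse cubics. (Alternatively, one can check that at the six Family 5 points of the Family 3 line the Pfaffian locus jumps to dimension $3$, becoming a union of Segre $\bP^1\times\bP^2$'s which do contain these planes --- so your instinct about the Family 5 boundary was pointing at a real phenomenon, but it is this dimension jump, not a degeneration of the quadrics.) Finally, a small patch is needed in your easy direction: you conclude from $z_i\,\partial_jF=0$ for $i\ne j$ that $\partial_jF$ vanishes on the Pfaffian locus, but the $z_i$ with $i\ne j$ do have a common zero, the coordinate point $e_j$; there one should instead use the diagonal identity $\partial_j(\mathrm{Pfaff}_j)=F+z_j\partial_jF$ together with the vanishing of $F$ on the Pfaffian locus (this containment is in any case already recorded in \S\ref{sec:coblecubics}).
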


\begin{proof}
We just need to analyze the discrepancy between the zero sets defined by the Pfaffian equations $\rank \Phi_\bc(z) < 6$ from \eqref{eqn:phimatrix} and Jacobian equations \eqref{eqn:jacobian} for each family in Table~\ref{tab:G32flats}. For the generic family they define the same zero set.

For Families 2 and 3, we saw in \S\ref{sec:w39family2} that the discrepancy is given by a union of $3$ linear $\bP^2$'s. The Pfaffian locus contains a plane cubic in each of these planes, and in total, these plane cubics form the Hesse pencil and fill out the planes. So we see that in this case, the union over all Pfaffian loci in Families 2 and 3 coincides with the union over all the Jacobian loci in Families 2 and 3.

For Families 4 and 5, we saw in \S\ref{sec:planecubics} that the Jacobian locus is a Segre product $\bP^2 \times \bP^2$ and the Pfaffian locus is of the form $C \times \bP^2$ for a plane cubic curve $C$ in Hesse normal form. So again, we see that the Pfaffian loci and Jacobian loci agree after taking the union over all elements in Families 4 and 5.
\end{proof}

\subsection{Determinantal equations}

\begin{theorem} \label{thm:cobleshioda}
The Coble--Shioda variety is defined set-theoretically by the maximal minors of the matrix
\[
\rCS(z) = \begin{pmatrix}
z_{1}^{2}& z_{2}^{2}& z_{3}^{2}& z_{4}^{2}& z_{5}^{2}& z_{6}^{2}& z_{7}^{2}&    z_{8}^{2}& z_{9}^{2}\\
z_{2} z_{3} & z_{1} z_{3} & z_{1} z_{2} & z_{5} z_{6} & z_{4} z_{6} & z_{4} z_{5} & z_{8} z_{9} & z_{7} z_{9} & z_{7} z_{8}\\
z_{4} z_{7} & z_{5} z_{8} & z_{6} z_{9} & z_{1} z_{7} & z_{2} z_{8} & z_{3} z_{9} & z_{1} z_{4} & z_{2} z_{5} & z_{3} z_{6}\\
z_{5} z_{9} & z_{6} z_{7} & z_{4} z_{8} & z_{3} z_{8} & z_{1} z_{9} & z_{2} z_{7} & z_{2} z_{6} & z_{3} z_{4} & z_{1} z_{5}\\
z_{6} z_{8} & z_{4} z_{9} & z_{5} z_{7} & z_{2} z_{9} & z_{3} z_{7} & z_{1} z_{8} & z_{3} z_{5} & z_{1} z_{6} & z_{2} z_{4}
\end{pmatrix}
\]
\end{theorem}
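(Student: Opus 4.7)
The plan is to mirror the strategy of Theorem~\ref{thm:shiodasurface}, replacing $\BHM(z)$ by $\rCS(z)$ and the Veronese conic by the Burkhardt quartic $\cB$. Call $V$ the zero set of the maximal minors of $\rCS(z)$; I would prove the two inclusions $\CS \subseteq V$ and $V \subseteq \CS$ separately, using Theorem~\ref{thm:CSdefn} to identify $\CS$ with $\CS_{\Jac}$.

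For the inclusion $\CS \subseteq V$: The key observation is that the nine Jacobian equations \eqref{eqn:jacobian} are precisely the entries of the row vector
\[
\bigl(\gamma_1(\bc),\gamma_2(\bc),\gamma_3(\bc),\gamma_4(\bc),\gamma_5(\bc)\bigr) \cdot \rCS(z),
\]
where the $\gamma_i$ are the Macdonald coordinates from \S\ref{ss:burkhardt}. If $z \in \CS_{\Jac}$, then by definition there exists $\bc \in \bP^3 \setminus \text{Family 6}$ with $z$ in the zero locus of \eqref{eqn:jacobian}; and by Remark~\ref{rmk:gorenstein}, Family 6 is exactly the base locus of $\gamma$, so $(\gamma_1(\bc),\dots,\gamma_5(\bc)) \ne 0$. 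This gives a nontrivial element of the left kernel of $\rCS(z)$, so $\rank \rCS(z) \le 4$ and $z \in V$. Taking closures finishes this inclusion.

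For the inclusion $V \subseteq \CS$: Let $z \in V$ and let $(g_1:\cdots:g_5)$ be a left null vector of $\rCS(z)$. The central claim to verify is
\[
g_1(g_1^3+g_2^3+g_3^3+g_4^3+g_5^3) + 3 g_2 g_3 g_4 g_5 \equiv 0 \pmod{I_5(\rCS(z))},
\]
that is, any left null vector lies on the Burkhardt quartic $\cB$. Granted this, I would invoke Proposition~\ref{prop:cmapsurj}: away from the exceptional orbit of 160 points on $\cB$, the rational map $\gamma \colon \bP^3 \setminus \text{Family 6} \to \cB$ is surjective, so one can choose $\bc \notin \text{Family 6}$ with $\gamma(\bc) = (g_1:\cdots:g_5)$; then $z$ satisfies \eqref{eqn:jacobian} for this $\bc$, placing $z$ in $\CS_{\Jac} = \CS$. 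The subvariety of $V$ on which the kernel direction lies in the 160-point $\rG_{32}$-orbit is a proper closed subset of $V$; since $\CS$ is closed and irreducible (Proposition~\ref{prop:CSirred}) and I have already shown $V \setminus (\text{this subset}) \subseteq \CS$, this exceptional locus is swept into $\CS$ by closure. I would supplement this with a direct check that the fibers over the 160 exceptional points really do lie in the degenerate loci of Families 2--6 already known to be contained in $\CS$ by the analysis of \S\ref{sec:w39degenerations}.

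The main obstacle is the Burkhardt congruence itself, which is the substantive content beyond the formal wrap-up. As in the $\BHM$ proof, the cleanest way is an explicit cofactor argument: for each choice of four of the nine columns, Laplace expansion produces (when $\rank \rCS(z) = 4$) a specific left null vector $v(i,j,k,\ell)$ whose entries are degree-$8$ polynomials in the $z_i$; one checks that each such vector satisfies the Burkhardt relation modulo $I_5(\rCS(z))$. The number of cases ($\binom{9}{4}=126$) is manageable by computer algebra directly, but $\rG_{32}$-equivariance of the whole setup (the five rows form the Macdonald representation, the columns are permuted by Heisenberg translations, and the ideal $I_5(\rCS(z))$ is $\rG_{32}$-stable) reduces this to only a few orbit representatives. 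Lower-rank strata ($\rank \rCS(z) \le 3$) give a projective family of kernel directions, so it suffices to find one such direction lying on $\cB$, which is automatic once the generic rank-$4$ case is settled. I do not attempt to verify that the maximal minors generate the prime ideal of $\CS$ — only the set-theoretic statement is claimed, consistent with the conjectural extra equations discussed in the sequel.
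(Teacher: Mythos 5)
Your proposal is, in outline, the paper's own proof: the forward inclusion via the identity that the nine Jacobian equations \eqref{eqn:jacobian} are the entries of $\gamma(\bc)\cdot\rCS(z)$, and the reverse inclusion via a computer-verified congruence showing that the cofactor null vectors of a rank-$4$ matrix $\rCS(z)$ lie on the Burkhardt quartic, followed by Proposition~\ref{prop:cmapsurj} and a special analysis at the $160$ exceptional points. (The paper makes your symmetry reduction precise: the affine group $\GL_2(\bF_3)\ltimes\bF_3^2$ permutes the nine columns with exactly two orbits on $4$-element subsets, represented by $\{1,2,3,4\}$ and $\{1,2,4,5\}$.) However, one of your steps is a genuine gap, and another is flawed as stated though you supply its own repair.

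The gap is the treatment of the lower-rank strata. You claim that for $\rank\rCS(z)\le 3$ the existence of a kernel direction on $\cB$ is ``automatic once the generic rank-$4$ case is settled.'' It is not: the cofactor vectors $v(i,j,k,\ell)$ are built from $4\times 4$ minors, so they vanish identically on the rank $\le 3$ locus and single out no point of $\bP^4$ there; and a limiting argument would require knowing that the rank $\le 3$ points of $V$ lie in the closure of the rank-$4$ locus of $V$, which is precisely the kind of statement at issue, since $V$ is not yet known to be irreducible and could a priori have entire components inside the low-rank locus. The paper closes this case by the explicit analysis of \S\ref{ss:CSminors}: rank exactly $3$ never occurs, and the rank $\le 2$ locus is exactly a union of $120$ linear $\bP^2$'s, each contained in a Family~3 Jacobian locus (\S\ref{ss:w39family3}) and hence in $\CS_{\Jac}=\CS$ by Theorem~\ref{thm:CSdefn}. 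Alternatively, your own ``projective family'' remark can be turned into a correct argument needing no computation: when $\rank\rCS(z)\le 3$ the left kernel has projective dimension at least $1$, and every positive-dimensional linear subspace of $\bP^4$ meets the hypersurface $\cB$, so a kernel direction on $\cB$ exists for reasons independent of the rank-$4$ case; your dichotomy (image of $\gamma$ versus the $160$ exceptional points) then applies verbatim. One of these two repairs must be inserted; as written the step fails.

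Separately, your closure argument at the $160$ exceptional points is circular: since $V$ is not known to be irreducible, a ``proper closed subset'' of $V$ may contain whole components, so nothing is automatically ``swept into $\CS$ by closure'' --- and the fibers over those points are exactly the kind of locus that could a priori be such components. The supplement you propose is what actually carries this step, and it is the paper's argument: the fiber over $[0:0:0:0:1]$ is the zero locus of the partial derivatives of $z_1z_6z_8+z_2z_4z_9+z_3z_5z_7$, namely a union of $27$ linear $\bP^2$'s lying in the rank $\le 2$ locus, hence in Family~3 Jacobian loci, hence in $\CS$; the remaining points follow by equivariance. So keep the supplement, drop the closure claim, and fix the lower-rank step as above.
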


\begin{remark}
We can interpret $\rCS(z)$ as the map (tensor)
\[
\rH^0(\bP^8; \cO_{\bP^8}(3))^{H_{3,2}} \otimes \rH^0(\bP^8; \cO_{\bP^8}(1))^* \to \rH^0(\bP^8; \cO_{\bP^8}(2)), 
\]
given by taking partial derivative, where $H_{3,2}$ is the Heisenberg group from \S\ref{sec:heisenberg}.
\end{remark}

\begin{proof}
If $[z_1 : \cdots : z_9]$ is in the zero locus of the Jacobian ideal for some $\gamma$, then $\rCS(z)$ does not have full rank, so the union of all zero loci of Jacobian ideals is a subset of the variety of maximal minors of $\rCS(z)$. Conversely, suppose that for $[z_1 : \cdots : z_9]$, we know that $\rCS(z)$ does not have full rank. By the discussion in \S\ref{ss:CSminors} below, the rank cannot be $3$. If it has rank $\le 2$, then it lies on the union of 120 linear $\bP^2$'s. These $\bP^2$'s are subsets of the varieties defined by the Jacobian ideals in Family 3, as follows from the discussion in \S\ref{ss:w39family3}. So by Theorem~\ref{thm:CSdefn}, we have $z \in \CS$.

So we may suppose that $\rank \rCS(z) = 4$. As in the proof of Theorem~\ref{thm:shiodasurface}, for each choice of $4$ columns $i,j,k,\ell$, we can find a linear dependence $v(i,j,k,\ell)$ among the rows. We claim that the entries of each $v(i,j,k,\ell)$ satisfy the relation \eqref{eqn:burkhardt} modulo the ideal of maximal minors. In fact, it is enough to check the subsets $\{1,2,3,4\}$ and $\{1,2,4,5\}$ since the group of affine transformations $H = \GL_2(\bF_3) \ltimes \bF_3^2$ acts as a symmetry group for these subsets, and there are only two configurations up to symmetry: either the $3$ of the $4$ points lie on an affine line in $\bA_{\bF_3}^2$ (and hence is equivalent to $\{1,2,3,4\}$) or they don't (and hence is equivalent to $\{1,2,4,5\}$. For these two subsets, we use {\tt Macaulay 2} to check directly.

So the maximal minors define a point on the Burkhardt quartic $\cB$. By Proposition~\ref{prop:cmapsurj}, this point will be in the image of the map \eqref{eqn:cmap} unless it is one of the special $160$ points. However, we can see directly that for any such point, the matrix $\rCS(z)$ does not have rank $4$. For example, taking the point $[0:0:0:0:1]$, the Jacobian ideal is generated by the partial derivatives of the cubic equation $z_1z_6z_8 + z_2z_4z_9 + z_3z_5z_7 = 0$. This is the ideal of the union of $27$ linearly embedded $\bP^2$'s whose equations are defined by the vanishing of $6$ variables, where we have chosen $2$ from each of the sets $\{z_1,z_6,z_8\}$, $\{z_2,z_4,z_9\}$, $\{z_3,z_5,z_7\}$. These are some of the $120$ linear $\bP^2$'s in the rank $\le 2$ locus discussed above. Hence we get $z \in \CS$.
\end{proof}

The proof of Theorem~\ref{thm:cobleshioda} gives a characterization of singular Heisenberg-invariant cubics.

\begin{corollary}
The closure of the locus of Coble cubics in the space of all Heisenberg-invariant cubics on $\bP^8$ is the locus of singular Heisenberg-invariant cubics.
\end{corollary}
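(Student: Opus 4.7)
The plan is to identify the $5$-dimensional space of Heisenberg-invariant cubics with $\bP^4$ via the basis $T_1=\sum_{i=1}^{9}z_i^3,\ T_2,\ldots,T_5$ given by the five summands of \eqref{eqn:coblecubic}, and then to observe that the row of first partial derivatives of $f_\gamma=\sum_i \gamma_i T_i$ evaluated at $z$ is exactly $\gamma^T\,\rCS(z)$. Hence $f_\gamma$ is singular at some point $z\in\bP^8$ if and only if $\gamma$ lies in the left kernel of $\rCS(z)$. Combined with Proposition~\ref{prop:cmapsurj}, which identifies the closure of the locus of Coble cubics inside $\bP^4$ with the Burkhardt quartic $\cB$, the corollary reduces to the set equality
\[
\cB \;=\; \bigl\{\gamma\in\bP^4 : \gamma^T\rCS(z)=0 \text{ for some } z\in\bP^8\bigr\}.
\]

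The inclusion $\cB\subseteq\{\ldots\}$ is the easy direction: for $\gamma\in\cB$ outside the $160$ exceptional points of Proposition~\ref{prop:cmapsurj}, the cubic $f_\gamma$ is an honest Coble cubic, which by \S\ref{sec:coblecubics} is singular along the associated $(3,3)$-polarized Abelian surface $X_{\bc}$; the $160$ exceptional points are then absorbed by taking closures, since the singular locus of the Heisenberg pencil is Zariski-closed in $\bP^4$.

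For the reverse inclusion I would simply recycle the computation already built into the proof of Theorem~\ref{thm:cobleshioda}. If $\gamma^T\rCS(z)=0$ at some $z$, then $\rank\rCS(z)\le 4$. When equality holds, the left kernel is one-dimensional and spanned by one of the row-linear-dependences $v(i,j,k,\ell)$ that was shown in that proof to satisfy the Burkhardt relation~\eqref{eqn:burkhardt} modulo the maximal minors, giving $\gamma\in\cB$. Rank $3$ does not occur by \S\ref{ss:CSminors}, and in the rank $\le 2$ stratum $z$ lies on one of the $120$ explicit $\bP^2$'s of \S\ref{ss:w39family3}; on a representative $\bP^2$ one checks directly that $\rCS(z)$ supports only two nonzero rows carried by two of the coordinates $\gamma_i$, which forces the kernel into a linear subspace on which both summands of the Burkhardt polynomial vanish identically, and the remaining $119$ model $\bP^2$'s follow by $\rG_{32}$-equivariance (which preserves $\cB$).

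The main obstacle I anticipate is precisely this last step: keeping the kernel of $\rCS(z)$ inside $\cB$ across the degenerate stratum, where the projective kernel is a positive-dimensional subspace of $\bP^4$ rather than a point. That is why performing the calculation on a single model $\bP^2$ and transporting by the $\rG_{32}$-action is the cleanest route, avoiding a case-by-case check over all $120$ configurations.
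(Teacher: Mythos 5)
Your reduction of the corollary to the set equality $\cB = \{\gamma \in \bP^4 : \gamma^T \rCS(z) = 0 \text{ for some } z \in \bP^8\}$, your easy inclusion, and your handling of the rank-$4$ case are all sound, and they coincide with the only argument the paper itself offers (the paper gives no separate proof of the corollary; it simply points back to the proof of Theorem~\ref{thm:cobleshioda}, which contains exactly the Burkhardt-relation check for the kernels $v(i,j,k,\ell)$). The genuine gap is in your rank $\le 2$ step, and it cannot be closed, because the claim you need there is false. The $120$ planes contain the $360$ points cut out by the $2\times 2$ minors of $\rCS(z)$ (listed in \S\ref{ss:CSminors}), and at those points the left kernel is not the $\bP^2$ you see at a generic point of the plane but a full hyperplane $\bP^3 \subset \bP^4$, which is not contained in the quartic $\cB$. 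Concretely, at $z = [1:0:\cdots:0]$ the only nonzero entry of $\rCS(z)$ is $z_1^2$ in position $(1,1)$, so the left kernel is $\{\gamma_1 = 0\}$, on which the Burkhardt form \eqref{eqn:burkhardt} restricts to $3\gamma_2\gamma_3\gamma_4\gamma_5 \not\equiv 0$. Taking $\gamma = [0:1:1:1:1]$, i.e.\ the sum of the twelve line monomials $z_iz_jz_k$ with no $z_i^3$ terms, every partial derivative is a sum of square-free quadratic monomials and hence vanishes at $[1:0:\cdots:0]$; so this cubic is singular, yet $\gamma \notin \cB$, so it is not a limit of Coble cubics. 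Transporting one model computation by $\rG_{32}$-equivariance cannot repair this: the group action preserves the bad points along with the good ones. (A lesser inaccuracy: ``only two nonzero rows'' describes just the $12$ coordinate-type planes; on the $108$ twisted planes all five rows are nonzero and merely span a $2$-dimensional space. At generic, i.e.\ rank-$2$, points of all $120$ planes the kernel does lie in $\cB$; the failure occurs exclusively at the $360$ rank $\le 1$ points.)

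What this shows is that the corollary is true only under a more restrictive reading, which is also precisely what the proof of Theorem~\ref{thm:cobleshioda} delivers: in $\bP^8$, the set of points at which \emph{some} Heisenberg-invariant cubic is singular equals $\CS$, the closure of the union of the singular loci of the Coble cubics; equivalently, in $\bP^4$, every invariant cubic that is singular at a point where $\rank \rCS(z) = 4$ lies in $\cB$ (in particular every invariant cubic with positive-dimensional singular locus lies in $\cB$, since the rank $\le 1$ locus is finite, rank $3$ does not occur, and the rank-$2$ kernels lie in $\cB$). Under the literal reading --- cubics possessing a singular point anywhere --- the locus of singular invariant cubics is $\cB$ together with finitely many hyperplanes such as $\{\gamma_1 = 0\}$, and the asserted equality fails. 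So the defect is shared between your write-up and the statement as phrased: the part of your argument that is correct is exactly the rereading of the theorem's proof that the paper intends, while the extra step you supplied to cover the degenerate stratum is unprovable because it is false.
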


The following $4$ equations belong to the radical of the ideal of maximal minors of $\rCS(z)$ (in fact the square of each equation is in the determinantal ideal):
\small\begin{align*}
z_2z_6z_7(z_1^3-z_3^3-z_4^3+z_5^3-z_8^3+z_9^3) + z_3z_4z_8(-z_1^3+z_2^3-z_5^3+z_6^3+z_7^3-z_9^3) + z_1z_5z_9(-z_2^3 + z_3^3 + z_4^3 - z_6^3 - z_7^3 + z_8^3),\\ 
z_3z_5z_7(z_1^3 - z_2^3 - z_4^3 + z_6^3 + z_8^3 - z_9^3) + z_1z_6z_8(z_2^3 - z_3^3 + z_4^3 - z_5^3 - z_7^3 + z_9^3) + z_2z_4z_9(-z_1^3 + z_3^3 + z_5^3 - z_6^3 + z_7^3 - z_8^3),\\
z_1z_4z_7(z_2^3 - z_3^3 + z_5^3 - z_6^3 + z_8^3 - z_9^3) + z_2z_5z_8(-z_1^3 + z_3^3 - z_4^3 + z_6^3 - z_7^3 + z_9^3) + z_3z_6z_9(z_1^3 - z_2^3 + z_4^3 - z_5^3 + z_7^3 - z_8^3),\\
z_1z_2z_3(z_4^3 + z_5^3 + z_6^3 - z_7^3 - z_8^3 - z_9^3) + z_4z_5z_6(-z_1^3 - z_2^3 - z_3^3 + z_7^3 + z_8^3 + z_9^3) + z_7z_8z_9(z_1^3 + z_2^3 + z_3^3 - z_4^3 - z_5^3 - z_6^3).
\end{align*}
\normalsize
These were found as follows. The variety $\{(\gamma,z) \in \cB \times \bP^8 \mid I_\gamma(z) = 0\}$, where $I_\gamma$ is the set of equations in \eqref{eqn:jacobian}, has some obvious equations, namely, the quartic equation defining $\cB \subset \bP^4$, and the $9$ bidegree $(1,2)$ equations from \eqref{eqn:jacobian} (with the quartic expressions in $c_i$ replaced by the coordinates on $\cB$). Let $I$ be this ideal and let $y_1, \dots, y_5$ be the coordinates on $\bP^4$. Then we executed the following command in {\tt Macaulay 2}
\begin{verbatim}
eliminate({y_1..y_5}, quotient(I,ideal(y_1..y_5)))
\end{verbatim}
to find these $4$ equations.
If we add these equations, we get a saturated ideal with Hilbert series
\begin{align*}
\frac{1+3T+6T^2+10T^3+15T^4+21T^5+24T^6+24T^7+21T^8}{(1-T)^6} + \qquad \\*
\qquad \qquad \qquad \frac{15T^9-30T^{10}-60T^{11}+105T^{12}-75T^{13}+30T^{14}-5T^{15}}{(1-T)^{6}},
\end{align*}
and this Betti table
\small \begin{Verbatim}[samepage=true]
       0  1   2   3   4   5   6  7 8
total: 1 40 234 540 630 425 180 45 5
    0: 1  .   .   .   .   .   .  . .
       ...
    5: .  4   .   .   .   .   .  . .
       ...
    9: . 36  54   .   .   .   .  . .
   10: .  . 180 540 630 425 180 45 5
\end{Verbatim}
\normalsize

\begin{conjecture} \label{conj:CSradical}
The maximal minors of $\rCS(z)$ and the $4$ additional equations above generate a prime ideal.
\end{conjecture}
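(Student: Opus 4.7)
The plan is to reduce the conjecture to a radicality statement. Write $J$ for the ideal generated by the $40$ maximal minors of $\rCS(z)$ together with the four supplementary sextics. By Theorem~\ref{thm:cobleshioda}, the minors alone cut out $\CS$ set-theoretically, and by Proposition~\ref{prop:CSirred}, $\CS$ is irreducible, so $\sqrt{J}=I(\CS)$ is prime. The content of the conjecture is therefore that $J$ is radical, or equivalently that $J$ has no embedded primary components and that its primary component at the generic point of $\CS$ is $I(\CS)$.

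First I would check that each supplementary equation lies in $I(\CS)$. By construction, they are obtained by eliminating the coordinates $y_1,\ldots,y_5$ from the Burkhardt relation together with the nine bidegree $(1,2)$ relations \eqref{eqn:jacobian}, so they vanish on the variety $\CS_{\Jac}$, which equals $\CS$ by Theorem~\ref{thm:CSdefn}. Next, I would identify $I(\CS)$ with the elimination ideal of $\bc$ from the universal Jacobian ideal $I^{\mathrm{univ}}\subseteq K[\bc,z]$ of Remark~\ref{rmk:33sat}; explicitly,
\[
I(\CS)\;=\;\bigl(I^{\mathrm{univ}}:(c_1c_2c_3c_4\,\Delta)^\infty\bigr)\cap K[z_1,\ldots,z_9],
\]
so the conjecture amounts to the claim that this particular elimination ideal is generated by the listed forms.

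To establish primality directly, my approach would combine three ingredients. (i) Sharpen Theorem~\ref{thm:cobleshioda} scheme-theoretically by showing that the top-dimensional primary component of the ideal of maximal minors of $\rCS(z)$ is already $I(\CS)$. Both the minors and their syzygies are manifestly equivariant for $\Sp_4(\bF_3)\times H_{3,2}$, and I would exploit this to isotypically decompose the relevant pieces of a free resolution. (ii) Analyze the strata on which $\rCS(z)$ drops rank below $5$ (the analysis in \S\ref{ss:CSminors} is the starting point) and, for each candidate embedded prime -- coming for instance from the $120$ planes in Family~3, the Segre threefolds in Families~4--5, or the three $\bP^5$'s in Family~6 -- exhibit one of the four supplementary cubics, or a combination, that scheme-theoretically removes it. (iii) Verify that after adding the supplementary equations the resulting Hilbert series (displayed in the excerpt) matches that of $I(\CS)$; an independent computation of the latter should be feasible by pushing forward from the incidence resolution $Z\to\CS$ in the proof of Proposition~\ref{prop:CS=CSpf}, using the generic-fiber structure (smooth Abelian surfaces) together with the classification of degenerate fibers in \S\ref{sec:w39degenerations}.

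The main obstacle is step~(ii). The Betti table displayed in the excerpt has projective dimension $8$ in a polynomial ring of Krull dimension $9$, so by Auslander--Buchsbaum $\depth(R/J)=1$ while $\dim(R/J)=6$; thus $R/J$ is very far from Cohen--Macaulay and the cleanest primality criteria (unmixedness via $\depth=\dim$, or Serre's $R_0+S_1$) are not directly available. Each low-dimensional stratum listed in Table~\ref{tab:G32flats} gives a potential embedded prime that must be excluded individually, which is a delicate bookkeeping problem. A direct Gröbner basis computation over $\bQ$ is likely infeasible given the number and degree of generators, but a verification over a suitably chosen finite field is within the reach of \texttt{Macaulay 2} and would settle the conjecture in that characteristic; transferring the result to characteristic zero would then require a flatness argument for the family of ideals, which itself seems nontrivial.
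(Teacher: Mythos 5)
The statement you are addressing is labelled a \emph{conjecture} in the paper, and the paper does not prove it: the authors' only support is computational evidence, namely that the ideal $J$ generated by the maximal minors of $\rCS(z)$ together with the four extra equations is saturated, with the displayed Hilbert series and Betti table. So there is no proof in the paper to compare against, and your proposal does not close the gap either. Your opening reduction is correct and worth keeping: since the four extra equations lie in the radical of the minor ideal, $V(J)=\CS$, and since $\CS$ is irreducible (Proposition~\ref{prop:CSirred}), $\sqrt{J}=I(\CS)$ is prime, so primality of $J$ is equivalent to $J$ being radical, i.e.\ to $J$ having no embedded primes and the correct primary component at its unique minimal prime. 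Your observation that it would suffice to show $J$ and $I(\CS)$ have the same Hilbert function (given $J\subseteq I(\CS)$, which does hold) is also correct.

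The genuine gap is that none of your three ingredients (i)--(iii) is carried out, and each is essentially as hard as the conjecture itself. For (i), asking that the top-dimensional primary component of the minor ideal be $I(\CS)$ is a scheme-theoretic strengthening of Theorem~\ref{thm:cobleshioda}, and the equivariance you invoke gives no mechanism for proving it; the paper's own proof of Theorem~\ref{thm:cobleshioda} is purely set-theoretic. For (ii), ruling out embedded primes supported on the rank-$\le 4$ strata is exactly the unmixedness assertion at issue, and you give no colon-ideal or local computation showing the four supplementary equations (or anything else) kill those candidate components; your own depth computation ($\depth(R/J)=1$, $\dim(R/J)=6$) shows the standard criteria are unavailable, but it does not replace them. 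For (iii), an ``independent computation'' of the Hilbert series of $I(\CS)$ by pushing forward along $Z\to\CS$ requires at minimum knowing the degree of that generically finite map, controlling the locus where the fibration degenerates (Families 2--5), and cohomological vanishing to pass from the pushforward to the coordinate ring; none of this is sketched. Finally, your fallback of a Gr\"obner basis computation over a finite field would at best verify the statement in one characteristic, and, as you concede, the transfer to other characteristics is itself unproven. In short, the proposal is a sensible research plan with a correct initial reduction, but it is not a proof, and the statement remains open, consistent with its status as a conjecture in the paper.
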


\begin{remark}
The graded Betti table for the cokernel of $\rCS(z)$ is self-dual:
\small \begin{Verbatim}[samepage=true]
       0 1 2 3
total: 5 9 9 5
    0: 5 . . .
    1: . 9 . .
       ...
    5: . . 9 .
    6: . . . 5
\end{Verbatim}
\normalsize

The middle differential is a skew-symmetric matrix after a suitable choice of bases. A typical entry in the matrix looks like
$z_1z_3(z_4^3 + z_5^3 + z_6^3 - z_7^3 - z_8^3 - z_9^3) + 3z_2^2(-z_4z_5z_6 + z_7z_8z_9)$.
The matrix has rank $4$, and the $4 \times 4$ Pfaffians define the same ideal as the maximal minors of $\rCS(z)$.
\end{remark}

\begin{remark} \label{rmk:w39section}
$\rCS(z)$ generically has rank $4$ on $\bP_\rM$, so we see that $\bP_\rM$ is contained in the Coble--Shioda variety. Its rank drops to $2$ on $40$ points, which will correspond to the base locus of the degree $6$ map $\bP^3 \dashrightarrow \cB$. The matrix never has rank below $2$ on $\bP_\rM$. By taking minors of $\rCS(z)$ over one of the 6 points of intersection $\bP_\rM \cap X$, for some Abelian surface $X$, we recover its Burkhardt coordinates, so we get a degree $6$ rational map $\bP_\rM \dashrightarrow \cB$. But we can do better. By \S\ref{ss:coblecubics}, we see that each of the $6$ intersection points will come from a $9 \times 9$ skew-symmetric matrix. Ultimately, we get a section $\bP^3 \to \bP^8$ defined by 
\[
[c_1:c_2: c_3:c_4] \mapsto [0: -c_1 : c_1: -c_2: -c_3: -c_4: c_2: c_4: c_3]
\]
and this maps isomorphically to $\bP_\rM$. We can get $80$ other sections by taking translates by the Heisenberg group.
\end{remark}

\subsection{Lower-order minors.} \label{ss:CSminors}

The variety defined by the $4 \times 4$ minors of $\rCS(z)$ coincides with the variety defined by the $3 \times 3$ minors of $\rCS(z)$ and is a surface of degree $120$. We claim that it is a union of $120$ $\bP^2$'s.

\begin{compactenum}[(1)]
\item For every line $\{i,j,k\}$ in $\bA^2_{\bF_3}$, we have the $\bP^2$ defined by $z_\ell = 0$ for $\ell \notin \{i,j,k\}$. This gives $12$ $\bP^2$'s.

\item For every point in $\bP^1_{\bF_3}$, we can define $27$ different $\bP^2$'s in the $3 \times 3$ minors. Let $\omega$ be a fixed cube root of unity. Then, for example, 
\begin{align*}
z_1 = \omega^{i_1} z_2 = \omega^{j_1} z_3, \qquad 
z_4 = \omega^{i_2} z_5 = \omega^{j_2} z_6, \qquad 
z_7 = \omega^{i_3} z_8 = \omega^{j_3} z_9.
\end{align*}
defines such a $\bP^2$ if and only if $i_1 + i_2 + i_3 = j_1 + j_2 + j_3 = i_1 - i_2 + j_1 - j_2 = 0$ modulo $3$. The other $3$ points in $\bP^1_{\bF_3}$ are handled in a similar way, and they are:
\begin{align*}
z_1 = \omega^{i_1} z_4 = \omega^{j_1} z_7, \qquad 
z_2 = \omega^{i_2} z_5 = \omega^{j_2} z_8, \qquad 
z_3 = \omega^{i_3} z_6 = \omega^{j_3} z_9;\\
z_1 = \omega^{i_1} z_6 = \omega^{j_1} z_8, \qquad 
z_2 = \omega^{i_2} z_4 = \omega^{j_2} z_9, \qquad 
z_3 = \omega^{i_3} z_5 = \omega^{j_3} z_7;\\
z_1 = \omega^{i_1} z_5 = \omega^{j_1} z_9, \qquad 
z_2 = \omega^{i_2} z_6 = \omega^{j_2} z_7, \qquad 
z_3 = \omega^{i_3} z_4 = \omega^{j_3} z_8.
\end{align*}
\end{compactenum}

This gives $4 \times 27 = 108$ $\bP^2$'s. For any two $\bP^2$'s, they are either disjoint, or intersect in exactly 1 point. In particular, this variety is not Cohen--Macaulay.

If we intersect the ideals of all of these hyperplanes, we get the saturation of the ideal of $3 \times 3$ minors. It is generated by $36$ quintics and $480$ sextics. Here is its Betti table
\small \begin{Verbatim}[samepage=true]
       0   1    2    3    4    5    6   7 8
total: 1 516 2736 5715 6221 3995 1575 316 9
    0: 1   .    .    .    .    .    .   . .
    1: .   .    .    .    .    .    .   . .
    2: .   .    .    .    .    .    .   . .
    3: .   .    .    .    .    .    .   . .
    4: .  36    .    .    .    .    .   . .
    5: . 480 2736 5634 5816 3159  891 100 .
    6: .   .    .    .    .    .    .   . .
    7: .   .    .   81  405  836  684 216 .
    8: .   .    .    .    .    .    .   . 9
\end{Verbatim}
\normalsize

Its Hilbert series is
\[
\frac{1+6T+21T^2+56T^3+126T^4+216T^5-234T^6-108T^7-27T^8+54T^9+9T^{10}}{(1-T)^3}
\]

The variety of $2 \times 2$ minors consists of $360$ reduced points.

\begin{compactenum}[(1)]
\item There are $9$ singular points for the permutations of $[1: 0: 0: 0: 0: 0: 0: 0: 0]$. 

\item For every line $i,j,k$ in $\bA^2_{\bF_3}$, we get a singular point by setting $z_\ell = 0$ for $\ell \notin \{i,j,k\}$ and setting $z_i = 1$, $z_j = \zeta$ and $z_k = \zeta'$ where $\zeta,\zeta'$ are arbitrary cube roots of unity. This gives $12 \cdot 9 = 108$ more singular points.

\item If all of the coordinates are nonzero, then after scaling one can see that they must be cube roots of unity. Write $z_i = \omega^{e_i}$ where $\omega$ is a fixed cube root of unity. Then we see that the condition that the first two rows of the matrix are collinear is that $z_1z_2z_3 = z_4z_5z_6 = z_7z_8z_9$, and we get $3$ more pairs of equations by comparing the first row with the remaining three rows. This gives us $8$ linear equations over $\bZ/3$ for the $e_i$, and in fact this system reduces to three equations:
\begin{align*}
e_1 + e_2 + e_3 = e_4 + e_5 + e_6, \quad 
e_1 + e_2 + e_3 = e_7 + e_8 + e_9, \quad
e_1 + e_4 + e_7 = e_2 + e_5 + e_8
\end{align*}
Hence we get $3^5 = 243$ solutions (with the requirement that $z_1 = 1$). 
\end{compactenum}

A direct check (we used {\tt Macaulay 2}) shows that none of these $360$ points lie on a smooth Abelian surface. Namely, given a point, we can solve for all possible $c_1, \dots, c_4$ for which it lies in the locus of the Jacobian ideal \eqref{eqn:jacobian}, and in all cases, $c_1, \dots, c_4$ lies on the discriminant locus \eqref{eqn:discw39}.

The Betti table of the ideal of $2 \times 2$ minors (which is radical) is
\small \begin{Verbatim}[samepage=true]
       0   1   2    3    4    5    6   7  8
total: 1 180 904 2180 3375 3265 1764 450 31
    0: 1   .   .    .    .    .    .   .  .
    1: .   .   .    .    .    .    .   .  .
    2: .   .   .    .    .    .    .   .  .
    3: . 180 684  920  405   45    .   .  .
    4: .   . 220 1260 2970 3220 1764 450 30
    5: .   .   .    .    .    .    .   .  .
    6: .   .   .    .    .    .    .   .  .
    7: .   .   .    .    .    .    .   .  1
\end{Verbatim}
\normalsize

Here is its Hilbert series
\[
\frac{1+8T+36T^2+120T^3+150T^4+36T^5+8T^6+T^7}{1-T}.
\]

\begin{remark}
Each plane in the $3 \times 3$ minors contains exactly $12$ points in the $2 \times 2$ minors. Also, each point is contained in exactly $4$ planes:

The points of type 1 are contained in $4$ planes of type 1.

The points of type 2 are contained in $1$ plane of type 1 and $3$ planes of type 2.

The points of type 3 are contained in $4$ planes of type 2. 
\end{remark}

\noindent {\bf Authors' addresses:}

\medskip

\small \noindent Laurent Gruson, 
Universit\'e de Versailles Saint-Quentin-en-Yvelines, 
Versailles, France \\
{\tt laurent.gruson@math.uvsq.fr}

~

\small \noindent Steven V Sam, 
University of California, Berkeley, CA, USA\\
{\tt svs@math.berkeley.edu}, \url{http://math.berkeley.edu/~svs/}

\end{document}